\documentclass{article}
\usepackage{amsmath,amssymb,amsthm}
\usepackage{geometry}
\usepackage{hyperref}
\usepackage{authblk}
\usepackage{xcolor}
\usepackage{graphicx}
\usepackage{cleveref}

\usepackage[numbers,sort&compress]{natbib}
\title{Divergence-free Linearized Neural Networks: \\Integral Representation and Optimal Approximation Rates\thanks{Code is available at: \url{https://github.com/Eulauss/divfree-FNS}}}

\hypersetup{
    colorlinks=true,
    urlcolor=blue,
}

\author[1, 2]{Juncai He\thanks{Email: jche@tsinghua.edu.cn}}
\author[3]{Xinliang Liu\thanks{Email: xinliang.liu@ouc.edu.cn}}
\author[1]{Zitong Tian\thanks{Email: tzt23@mails.tsinghua.edu.cn}}

\affil[1]{Qiuzhen College, Tsinghua University, Haidian District, Beijing 100084, China. }
\affil[2]{Yau Mathematical Sciences Center, Tsinghua University, Haidian District, Beijing 100084, China. }
\affil[3]{School of Mathematical Sciences, Ocean University of China, Qingdao, Shandong 266100, China.}

\newtheorem{theorem}{Theorem}[section]
\newtheorem{definition}[theorem]{Definition}
\newtheorem{remark}[theorem]{Remark}
\newtheorem{lemma}[theorem]{Lemma}

\newcommand{\FNS}{L_{n}^{k}}
\newcommand{\VFNS}{V_{n}^{k}}

\begin{document}

\date{}
\maketitle

\begin{abstract}
% Enforcing physical constraints, such as incompressibility (divergence-free conditions), is a fundamental challenge in numerical analysis. 
% % Traditional methods like the Finite Element Method (FEM) often require specialized elements and complex stabilization. 
% Classical mixed and divergence-conforming discretizations are highly effective on meshes. We target an explicit mesh-free approximation regime in which the divergence constraint is enforced exactly at the trial-space level.
% Recent advancements in Gaussian Processes (GPs) have shown that these constraints can be embedded directly into the function space via specialized, structure-preserving kernels. However, GP methods suffer from poor scalability. 

% We demonstrate how FNS arises as a scalable approximation of structure-preserving kernels.
% FNS can be viewed as a scalable approximation of structure-preserving kernels.

This paper studies the numerical approximation of divergence-free vector fields by linearized shallow neural networks, also referred to as random feature models or finite neuron spaces. Combining the stable potential lifting for divergence-free fields with the scalar Sobolev integral representation theory via ReLU$^k$ networks, we derive a core integral representation of divergence-free Sobolev vector fields through antisymmetric potentials parameterized by linearized ReLU$^k$ neural networks. This representation, together with a quasi-uniform distribution argument for the inner parameters, yields optimal approximation rates for such linearized ReLU$^k$ neural networks under an exact divergence-free constraint. Numerical experiments in two and three spatial dimensions, including $L^2$ projection and steady Stokes problems, confirm the theoretical rates and illustrate the effectiveness of exactly divergence-free conditions in computation.
\end{abstract}

\section{Introduction}
Many physical systems are governed by differential constraints that encode essential physical or geometric conservation laws. A prototypical example is the divergence-free condition
$$\nabla\cdot u = 0,$$
which is fundamental in numerous problems from scientific computing and engineering, including incompressible fluid dynamics, magnetic fields in Maxwell theory, and the coupled fields appearing in magnetohydrodynamics (MHD). In the numerical simulation of these systems, violating this constraint at the level of approximation (discretization) can seriously compromise both stability and physical fidelity. 
For incompressible flows, this issue is closely tied to the development of exactly divergence-free and pressure-robust discretizations; see, for example, \cite{guzman_conformingdivergencefree_2014,john2017divergence,neilan2020stokes}. For Maxwell equations, analogous structure-preserving ideas play a fundamental role in finite element and geometric discretizations of electromagnetic fields \cite{brenner2007locally,monk2019finite,campos2022variational}. In the context of magnetohydrodynamics (MHD), it has long been recognized that even small departures from the divergence-free condition may produce large numerical errors in computation \cite{brackbill1980effect,evans1988simulation,toth2000b,hu2017stable}. Furthermore, Brackbill and Barnes pointed out that violating this constraint at the discrete level gives rise to a spurious, physically meaningless force \cite{brackbill1980effect}.

The challenge of enforcing divergence constraints has driven decades of algorithmic innovation across multiple numerical paradigms. To contextualize the proposed method, we briefly review three dominant approaches: mesh-based discretizations, neural network solvers, and kernel methods.

%%%%%%%% Classical Mesh Methods %%%%%%%%%%%%

In \emph{classical mesh-based numerical analysis}, incompressibility is handled at the discrete level through several mature paradigms. Mixed formulations~\cite{brezzi_mixedhybridfinite_1991, boffi_mixedfiniteelementmethods_2013} introduce pressure as a Lagrange multiplier to impose $\nabla\cdot u=0$ weakly. This necessitates saddle-point linear algebra and requires the velocity--pressure approximation spaces to satisfy the delicate Ladyzhenskaya--Babu\v{s}ka--Brezzi (LBB) inf--sup compatibility condition. Alternatively, divergence-conforming discretizations~\cite{arnold_finiteelementexterior_2006, guzman_conformingdivergencefree_2014} utilize exact-sequence principles (e.g., Raviart--Thomas or N\'ed\'elec elements) to construct velocity spaces with the correct discrete divergence structure. Isogeometric B-spline approaches also successfully construct strictly divergence-free bases on structured patches~\cite{evans_isogeometricdivergenceconforming_2013}. Other common approaches include stabilization schemes (e.g., grad-div penalization)~\cite{franca_stabilizedfiniteelement_1993} and projection-based fractional-step methods~\cite{chorin_numericalsolutionnavierstokes_1968}. In these classical settings, the discrete enforcement of $\nabla\cdot u=0$ is well understood and highly effective. However, they rely heavily on specific mesh topologies, specialized elemental constructions, or staggered-grid structures, which can become highly restrictive in flexible, mesh-free, moving geometries, or high-dimensional settings.

%%%%%%%% Machine Learning Methods, both PINN and Linear %%%%%%%%%%%%

Recently, \emph{Scientific Machine Learning (SciML)} has emerged as a flexible, mesh-free alternative.
From a machine-learning viewpoint, unconstrained regressors may fit data or residuals well while still
violating fundamental invariants such as incompressibility. One important line of work, represented by
Physics-Informed Neural Networks (PINNs)~\cite{raissi_physics_2019, karniadakis_physics_2021},
typically imposes the divergence constraint through a soft penalty term in the loss function, e.g.,
$\lambda \|\nabla \cdot u\|_{L^2}^2$. This offers considerable flexibility, but can also introduce
nontrivial optimization and loss-balancing difficulties when the divergence penalty competes with data
fitting and PDE residual terms~\cite{wang_understanding_2021, krishnapriyan_characterizing_2021}.

A complementary line of work instead hard-encodes the constraint through divergence-free
parameterizations or structure-preserving neural solvers~\cite{richter-powell_neuralconservationlaws_2022,
cheng_decoupleddivergencefreeneural_2026, li_structurepreservingrandomizedneural_2026}, so that
incompressibility is satisfied exactly at the ansatz level. These works highlight the practical promise
of conservation-aware neural design and illustrate both general conservation-aware parameterizations and PDE-oriented solvers.
In contrast, our focus is on an explicit divergence-free approximation space together with
functional-analytic guarantees, including provable approximation rates that are consistent with the
observed numerical behavior.

Another established route to mathematically rigorous structure preservation is via \emph{operator-designed kernels in Gaussian process (GP) models}. Classical GP regression~\cite{rasmussen_gaussianprocesses_2006} provides a principled Bayesian nonparametric framework in which prior regularity and geometry are encoded by kernels. Its vector-valued extension allows physical constraints to be built directly into the hypothesis space: by applying differential operators to scalar potential kernels, one obtains matrix-valued kernels whose Reproducing Kernel Hilbert Space (RKHS) contains \emph{only} divergence-free or curl-free fields~\cite{alvarez_kernelsvectorvalued_2012, narcowich_divergencefreerbfs_2007, wendland_divergencefreekernelmethods_2009}. 

Progress on constrained GPs includes both general operator-constrained formulations and more specialized divergence-free models for hydrodynamics and vector fields~\cite{jidling_linearlyconstrainedgp_2017, owhadi_gaussianprocesshydrodynamics_2023, gia_vectorvaluedgaussianprocesses_2025}. Yet scalability remains the main bottleneck: exact GP inference requires dense covariance algebra with cubic cost, and even sparse or structured variants, including sparse GP approximations, inducing-variable and stochastic variational methods, KISS-GP, GPU implementations, and recent convergence analyses~\cite{quinonero_unifyingsparsegp_2005, titsias_variationalinducing_2009, hensman_scalablevariationalgpclassification_2015, wilson_kissgp_2015, gardner_gpytorch_2018, burt_convergencesparsevi_2020} remain costly and technically involved in constrained vector-valued settings. This motivates finite-dimensional, explicit-feature surrogates that retain the structure-preserving philosophy while replacing infinite-dimensional GP inference with scalable, convex linear algebra in a fixed feature space.

In this work, we develop a structure-preserving, mesh-free approximation framework that addresses two
limitations of the existing landscape: compared with constrained GP models, it avoids expensive
infinite-dimensional covariance algebra, and compared with recent conservation-aware neural solvers, it
aims to provide an explicit approximation space together with functional-analytic guarantees. For
arbitrary dimensions $d\ge 2$, we introduce a divergence-free finite neuron space (FNS) $\VFNS$ by applying
the operator $\mathcal{D}$ to skew-symmetric matrix-valued potentials whose entries are drawn from
scalar ReLU$^k$ finite neuron spaces. This yields an explicit finite-dimensional ansatz in which incompressibility is enforced exactly at the trial-space level while the outer coefficients are computed by
convex linear algebra.

By combining the stable potential lifting for divergence-free
fields on star-shaped Lipschitz domains from~\cite{costabel_bogovskiiregularizedpoincare_2010} with
the scalar Sobolev integral representation theory for ReLU$^k$ features
in~\cite{liu_integralrepresentationssobolev_2025}, we show that divergence-free Sobolev fields admit a
representation in this divergence-free feature system through antisymmetric potentials. 
This yields the key integral representation theorem of the paper; see Theorem~\ref{thm:Integral-representation-of-FNS}. Specifically, for $u \in H^r(\Omega, \mathbb{R}^d)$ with $\nabla \cdot u = 0$, we show that
\begin{equation*}
        u(x) = \int_{\mathbb{S}^{d}} \sum_{1 \le i < j \le d} \Phi_{\theta, ij}(x) \psi_{ij}(\theta) d\theta ~: ~\psi_{ij} \in L^{2}(\mathbb{S}^{d}),
\end{equation*}
    where $\Phi_{\theta, ij}(x) = \partial_j \sigma_k(\theta \cdot \tilde{x}) e_i - \partial_i \sigma_k(\theta \cdot \tilde{x}) e_j$ are the basis functions derived from the antisymmetric potential, with the augmented variable $\tilde x=(x,1)\in\mathbb{R}^{d+1}$, parameters
$\theta=(w,b)\in\mathbb{R}^{d+1}$ and ReLU$^k$ activation function $\sigma_k(\cdot)$.
This result forms the analytic bridge from the scalar FNS theory to the divergence-free setting and, under quasi-uniform sampling of the inner parameters, leads to the optimal approximation estimate
\[
\inf_{u_n\in \VFNS}\|u-u_n\|_{H^s(\Omega)}
\lesssim
n^{-(r-s)/d}\|u\|_{H^r(\Omega)},
\qquad 0\le s\le \min\{k-1,r\}.
\]
In this way, the optimal approximation rate for scalar linearized neural networks is carried over to divergence-free vector fields.

To assess both the theory and its practical value, we perform three main groups of numerical
experiments: divergence-free field approximation, steady Stokes problems with manufactured solutions,
and lid-driven cavity flow with nontrivial boundary data. Across these tests, in two and three spatial
dimensions, the observed error decay is broadly consistent with the theoretical predictions and
demonstrates that exact enforcement of incompressibility at the trial-space level leads to an effective and
practical structure-preserving discretization.

%%%%%%%%%%%% organization %%%%%%%%%%%
The remainder of the paper is organized as follows. Section~\ref{sec:preliminary} reviews scalar finite neuron space approximation, Sobolev integral representations, and the exterior-calculus-based potential framework for divergence-free fields. \Cref{sec:integral-representation,sec:error-rates} introduces the space $\VFNS$ and establishes the corresponding integral representation and approximation results. Section~\ref{sec:methods} describes the implementation, including quadrature, linear solvers, and the quasi-uniform generation of parameters. Section~\ref{sec:numerical-experiments} presents numerical experiments for $L^2$ approximation of divergence-free fields, the steady Stokes equations with homogeneous Dirichlet boundary conditions, and steady lid-driven cavity flow. Finally, Section~\ref{sec:conclusion} concludes with a brief discussion and several directions for future work. 
For brevity, additional tables and figures are deferred to Section~\ref{sec:extra-tables-figures}.

%%%%%%%%%%%%%%%%%%%%%%%%%%%%%%%%%%%%%%

 {}

\section{Preliminary}\label{sec:preliminary}

\subsection{Linearized networks}
Let $\sigma_k(t):=\mathrm{ReLU}^k(t)=(t)_+^k$ for an integer $k\ge 0$.
We use the augmented variable $\tilde x=(x,1)\in\mathbb{R}^{d+1}$ and parameters
$\theta=(w,b)\in\mathbb{R}^{d+1}$ so that we can write $\sigma_k(\theta\cdot\tilde x)=\sigma_k(w\cdot x+b)$.
By homogeneity of $\sigma_k$, it is standard to restrict $\theta$ to the unit sphere
$\mathbb{S}^d\subset\mathbb{R}^{d+1}$ by absorbing scale into the outer coefficient.

Previous work on \emph{nonlinear} approximation with ReLU-type dictionaries derives sharp convergence rates in suitable variation-type model classes~\cite{siegel_sharpboundsapproximation_2024}, and related greedy algorithms are known to realize these rates in many settings~\cite{mallat_matchingpursuitstimefrequency_1993,siegel_optimalconvergencerates_2022,siegel_greedytrainingalgorithms_2023}. 
In this paper we do not pursue nonlinear training. Instead, we adopt a \emph{linearized} strategy by fixing the inner parameters and solving only for the outer coefficients, which turns the approximation into a convex least-squares problem and is the regime used throughout our analysis and experiments.

We pre-fix inner parameters $\{\theta_j^\ast\}_{j=1}^n\subset\mathbb{S}^d$ and only optimize the
outer coefficients. Define the Finite Neuron Space
\[
    L_n^k
    :=\mathrm{span}\left\{\phi_{\ell}(x)\right\}_{\ell=1}^n,
    \qquad
    \phi_{\ell}(x):=\sigma_k(\theta_{\ell}^\ast\cdot\tilde x).
\]

\begin{theorem}[Integral representation of scalar Sobolev space, Theorem 2.3 of \cite{liu_integralrepresentationssobolev_2025}]\label{thm:integral-rep-scalar}
    The Sobolev space $H^{\frac{d+2k+1}{2}}(\Omega)$ is the RKHS associated with the ReLU$^k$ features. It can be represented as:
    \begin{equation}
        H^{\frac{d+2k+1}{2}}(\Omega)=\left\{\int_{\mathbb{S}^{d}}\sigma_{k}(\theta\cdot\tilde{x})\psi(\theta)d\theta:\psi\in L^{2}(\mathbb{S}^{d})\right\}.
    \end{equation}
    In particular, for any $f\in H^{\frac{d+2k+1}{2}}(\Omega)$, there exists a $\psi\in L^2(\mathbb{S}^d)$ (that may not be unique) such that
    \begin{equation}\label{eq:fpsi_int}
        f(x)=\int_{\mathbb{S}^d}\sigma_k(\theta\cdot\tilde x)\psi(\theta)\,d\theta.
    \end{equation}
    Furthermore, 
    \begin{equation}
        \|f\|_{H^{\frac{d+2k+1}{2}}(\Omega)}\simeq\inf\limits_{\psi\in L^2(\mathbb{S}^d)}\bigl\{\|\psi\|_{L^2(\mathbb{S}^d)}: \psi \mbox{ satisfies }\eqref{eq:fpsi_int}\bigr\}.
    \end{equation}
\end{theorem}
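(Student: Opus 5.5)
The plan is to identify the RKHS of the kernel $K(x,y)=\int_{\mathbb{S}^d}\sigma_k(\theta\cdot\tilde x)\sigma_k(\theta\cdot\tilde y)\,d\theta$ with $H^{\frac{d+2k+1}{2}}(\Omega)$. Since $\Omega$ is bounded and Sobolev spaces on Lipschitz domains are restrictions of whole-space spaces with equivalent quotient norms, it suffices to work with the range of the operator $T:L^2(\mathbb{S}^d)\to C(\Omega)$, $T\psi(x)=\int_{\mathbb{S}^d}\sigma_k(\theta\cdot\tilde x)\psi(\theta)\,d\theta$. By the standard characterization of RKHS generated by features, the range of $T$ equipped with the quotient norm $\inf\{\|\psi\|_{L^2}: T\psi=f\}$ is exactly the RKHS of $K$. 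The statement therefore reduces to two estimates: (i) $\|T\psi\|_{H^{s}(\Omega)}\lesssim\|\psi\|_{L^2(\mathbb{S}^d)}$ with $s=\frac{d+2k+1}{2}$, and (ii) every $f\in H^s(\Omega)$ equals $T\psi$ for some $\psi$ with $\|\psi\|_{L^2}\lesssim\|f\|_{H^s}$.

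\textbf{Step (i).} I would use the Radon/Fourier picture. Write $\theta=(w,b)$ and decompose $\psi$ in spherical harmonics, or equivalently view $T\psi$ as a superposition of ridge functions $\sigma_k(w\cdot x+b)$. Taking the Fourier transform in $x$ (after multiplying by a cutoff equal to $1$ on $\Omega$) and using $\widehat{\sigma_k}(\omega)\sim|\omega|^{-(k+1)}$ in one dimension gives a bound on $\|T\psi\|_{H^s}$. The regularity index comes from counting: $k+1$ powers of decay from the activation, plus the $\frac{d-1}{2}$ gain from integrating over directions (the Radon transform smooths by $\frac{d-1}{2}$ in $L^2$), giving $k+1+\frac{d-1}{2}=s$. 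Equivalently, one can compute the spherical harmonic (Funk--Hecke) eigenvalues of the zonal kernel $\int\sigma_k(\theta\cdot\xi)\sigma_k(\theta\cdot\eta)\,d\theta$ on $\mathbb{S}^d$, which decay like $\ell^{-(d+2k+2)}$. This matches $H^{\frac{d+2k+1}{2}}$ regularity on the sphere (the $+\frac12$ shift comes from the dimension $d$ of $\mathbb{S}^d$ versus $\mathbb{R}^d$).

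\textbf{Step (ii).} This is the main obstacle: the surjectivity and the lower bound. I would embed $\Omega$ into a cap of the upper hemisphere of $\mathbb{S}^d$ via $x\mapsto\tilde x/|\tilde x|$. By the $k$-homogeneity of $\sigma_k$, $T\psi(x)=|\tilde x|^k\,(S\psi)(\tilde x/|\tilde x|)$ with $S$ the zonal operator on the sphere. An extension of $f|\tilde x|^{-k}$ to an $H^s(\mathbb{S}^d)$ function $g$ loses nothing, since the map is a smooth diffeomorphism onto a compact cap. The difficulty is that the Funk--Hecke multipliers of $\sigma_k$ vanish for some degrees $\ell$ (those of wrong parity with $\ell>k$), so $S$ is not injective on all harmonics. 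I would handle this by exploiting the freedom in choosing $g$ off the cap. The problematic harmonic components have a parity symmetry $g(-\xi)=\pm g(\xi)$, and $\sigma_k(t)-(-1)^k\sigma_k(-t)=t^k$ is a polynomial. Together these mean only finitely many low-degree obstructions arise beyond the invertible part, and these can be removed by choosing the extension on the lower hemisphere appropriately, e.g.\ taking $g$ even or odd to match $k$. Then inverting the multiplier $\lambda_\ell\sim\ell^{-(d+2k+2)/2}$ on the surviving degrees gives $\psi$ with $\|\psi\|_{L^2}\lesssim\|g\|_{H^s(\mathbb{S}^d)}\lesssim\|f\|_{H^s(\Omega)}$, completing the norm equivalence.
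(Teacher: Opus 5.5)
The paper gives no proof of this statement; it imports it from the cited reference, so your outline has to stand on its own. The architecture is the right one: the quotient-norm description of the feature RKHS, the central projection $x\mapsto\tilde x/|\tilde x|$ onto a compact cap in the open upper hemisphere, Funk--Hecke diagonalization of $S$, and a parity-adapted extension. However, step (ii) has two concrete errors that make it fail as written.

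\textbf{The exponent.} On the degrees where they do not vanish, the Funk--Hecke multipliers of $\sigma_k$ on $\mathbb{S}^d$ satisfy $|\lambda_\ell|\simeq\ell^{-\frac{d+2k+1}{2}}$. Hence the eigenvalues of $K$ are $\simeq\ell^{-(d+2k+1)}$, not $\ell^{-(d+2k+2)}$. For $d=1$ the multipliers are the Fourier coefficients of $(\cos\varphi)_+^k$, which decay like $\ell^{-(k+1)}$. There is no ``$+\frac12$ shift'': the central projection is a diffeomorphism of $\mathbb{R}^d$ onto the open hemisphere, so Sobolev orders on $\Omega$ and on the cap agree. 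Your own Radon count $k+1+\frac{d-1}{2}$ already contradicts your spherical count. With the multipliers you state, inversion only yields $\|\psi\|_{L^2}\lesssim\|g\|_{H^{(d+2k+2)/2}(\mathbb{S}^d)}$. That is half a derivative more than $\|f\|_{H^{(d+2k+1)/2}(\Omega)}$ controls, so the argument would characterize the wrong space.

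\textbf{The parity.} The correct identity is $\sigma_k(t)+(-1)^k\sigma_k(-t)=t^k$ (for $k=1$: $\mathrm{ReLU}(t)-\mathrm{ReLU}(-t)=t$), not the one you wrote. Since $\lambda_\ell$ only sees the part of $\sigma_k$ of parity $(-1)^\ell$, it follows that $\lambda_\ell=0$ exactly for $\ell>k$ with $\ell\equiv k\pmod 2$. That is infinitely many degrees, not ``finitely many low-degree obstructions''. What matters is that these degrees are missing from the range of $S$; non-injectivity only makes $\psi$ non-unique.

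The fix is to extend with $g(-\xi)=(-1)^{k+1}g(\xi)$. This is harmless because, after a cutoff, $g$ is supported in a compact subset of the open upper hemisphere, disjoint from its antipodal image. Then $g$ has no component in any annihilated degree. Your identity instead points to $g(-\xi)=(-1)^k g(\xi)$. That places every component of $g$ of degree above $k$ in the kernel of $S$, so $S\psi=g$ is solvable only when $g$ is a polynomial of degree at most $k$.

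\textbf{The missing analytic input.} The inversion needs more than large-$\ell$ asymptotics. You must show $\lambda_\ell\neq0$ for every $\ell\equiv k+1\pmod 2$, small $\ell$ included, with $|\lambda_\ell|\simeq(1+\ell)^{-\frac{d+2k+1}{2}}$ uniformly. This follows from an explicit evaluation of $\int_0^1 t^kP_\ell(t)(1-t^2)^{\frac{d-2}{2}}\,dt$ via Rodrigues' formula, with $P_\ell$ the degree-$\ell$ Gegenbauer polynomial for $\mathbb{R}^{d+1}$. The result is a ratio of Gamma functions whose reciprocal factor $1/\Gamma\bigl(1+\frac{k-\ell}{2}\bigr)$ is nonzero precisely when $\ell-k$ is odd. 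Once you have this, both (i) and (ii) follow from the same diagonalization, and the Fourier heuristic in (i) becomes unnecessary.
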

In practice, Sobolev spaces already cover a broad range of targets, and under a mild geometric condition on the pre-fixed inner parameters, one can recover the same approximation rates with this linear scheme.

Let $\rho(\cdot,\cdot)$ denote the geodesic distance on $\mathbb{S}^d$ and define the mesh norm
\[
h:=\max_{\theta\in\mathbb{S}^d}\min_{1\le j\le n}\rho(\theta,\theta_j^\ast).
\]
A set is \emph{well-distributed} if $h\lesssim n^{-1/d}$, and \emph{quasi-uniform} if additionally
$h$ is comparable to the separation distance $\min_{i\ne j}\rho(\theta_i^\ast,\theta_j^\ast)$.
These conditions ensure the neurons are spread evenly over the sphere and prevent clustering.
Then we have the linearized network approximation rates,
\begin{theorem}[Linear Approximation Rate, Theorem 2.2 in~\cite{liu_integralrepresentationssobolev_2025}]\label{thm:linear-approximation-rates}
    Let $\Omega\subset \mathbb{R}^d$ be a bounded domain with Lipschitz boundary. If $\{\theta_j^\ast\}_{j=1}^n$ is quasi-uniform, then for $f\in H^r(\Omega)$
    \[
        \inf_{f_n\in L_{n}^k}\|f-f_n\|_{H^s(\Omega)}
        \ \lesssim\ h^{\,r-s}\,\|f\|_{H^r(\Omega)}
        \ \lesssim\ n^{-(r-s)/d}\,\|f\|_{H^r(\Omega)},
        \qquad 0\le s\le \min\{k,r\},
    \]
    and in particular (taking $s=0$ and $r=\frac{d+2k+1}{2}$) one obtains the $L^2$ rate
    \[
        \inf_{f_n\in L_{n}^k}\|f-f_n\|_{L^2(\Omega)}
        \ \lesssim\
        n^{-\frac12-\frac{2k+1}{2d}}\ \|f\|_{H^{\frac{d+2k+1}{2}}(\Omega)}.
    \]
\end{theorem}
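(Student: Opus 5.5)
The plan is to derive the linear rate from the integral representation of Theorem~\ref{thm:integral-rep-scalar}, combined with a quadrature/discretization argument on the sphere driven by the mesh norm $h$, and then to extend the result to general $r$ and $s$ by interpolation. I would first treat the model case $r_0 = \frac{d+2k+1}{2}$. Given $f\in H^{r_0}(\Omega)$, I pick $\psi\in L^2(\mathbb{S}^d)$ with $f=\int_{\mathbb{S}^d}\sigma_k(\theta\cdot\tilde x)\psi(\theta)\,d\theta$ and $\|\psi\|_{L^2}\lesssim\|f\|_{H^{r_0}}$. Since $\{\theta_j^\ast\}$ is quasi-uniform, I can partition $\mathbb{S}^d$ into Voronoi-type cells $T_j\ni\theta_j^\ast$ of diameter $\lesssim h$ and volume $\simeq h^d$. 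The candidate approximant is
\[
f_n(x)=\sum_{j=1}^n c_j\,\sigma_k(\theta_j^\ast\cdot\tilde x),
\]
with $c_j$ obtained from local polynomial reproduction on the sphere rather than simple lumping $c_j=\int_{T_j}\psi$. The reason is that lumping only yields first-order accuracy in $h$.

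For the error estimate, I write
\[
f-f_n=\int_{\mathbb{S}^d}\psi(\theta)\Bigl[\sigma_k(\theta\cdot\tilde x)-\sum_j a_j(\theta)\sigma_k(\theta_j^\ast\cdot\tilde x)\Bigr]d\theta ,
\]
where the $a_j(\theta)$ are local reproducing weights. I then bound the $H^s(\Omega)$ norm of the bracket, as a function of $x$, using the smoothness of $\theta\mapsto\sigma_k(\theta\cdot\tilde x)$: it lies in $C^{k}$ away from the hyperplane $\theta\cdot\tilde x=0$, and its $H^s_x$-valued version has fractional regularity of order $k-s+\frac12$ in $\theta$. A cleaner route to the sharp rate $h^{r_0-s}$ is to pass to the kernel/RKHS picture. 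Here $K(\theta,\theta')=\int_\Omega\sigma_k(\theta\cdot\tilde x)\sigma_k(\theta'\cdot\tilde x)\,dx$ acts like a zonal-type kernel whose eigenvalues decay like $\ell^{-(d+2k+1)}$ in spherical harmonic degree $\ell$. Then $\|f-f_n\|_{L^2}$ is controlled by the power function of scattered-data interpolation on the sphere with this kernel, which for quasi-uniform points behaves like $h^{(d+2k+1)/2}$ (Narcowich--Ward--Wendland type estimates). This gives the $L^2$ case. For $H^s$ with $s\le k$, I replace $\sigma_k$ by its $x$-derivatives of order $s$, which are ReLU$^{k-s}$ features times polynomials in $w$, and obtain $h^{r_0-s}$ by the same argument with the reduced kernel.

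To reach general $r$, with $s\le r\le r_0$, I use the $K$-functional. Since $L_n^k$ is a fixed linear space, the approximation error $E(f)=\inf_{f_n}\|f-f_n\|_{H^s}$ is controlled by splitting $f=g+(f-g)$ with $g\in H^{r_0}$. This gives
\[
E(f)\le \|f-g\|_{H^s}+h^{r_0-s}\|g\|_{H^{r_0}} ,
\]
and taking the infimum over $g$ yields $K(h^{r_0-s},f;H^s,H^{r_0})\simeq h^{r-s}\|f\|_{H^r}$, since $H^r$ is the interpolation space between them. This step requires a bounded extension operator, which is available on Lipschitz $\Omega$. For $r>r_0$, I would need the representation at higher smoothness, i.e.\ $\psi$ itself smooth on the sphere, combined with higher-order quadrature. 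Finally, $h\lesssim n^{-1/d}$ converts the bound into a rate in $n$.

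The main obstacle is the discretization step: obtaining the full order $h^{r_0-s}$, and not just $h$ or $h^{1/2}$, from the $L^2$ density $\psi$. I would first try the spherical kernel-interpolation route, establishing the eigenvalue decay of the ReLU$^k$ Gram kernel via the Funk--Hecke formula, and then invoking known error bounds for quasi-uniform point sets on $\mathbb{S}^d$.
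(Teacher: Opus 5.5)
The paper only quotes this result from Liu--Mao--Xu, so I am judging your argument on its own. Your $K$-functional step for $s\le r\le r_0:=\frac{d+2k+1}{2}$ is fine, but the core step at $r=r_0$ has a genuine gap: the power function does not decay at the rate you claim.

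\emph{The power-function bound is too weak.} The power function of your kernel is $P_X(\theta)=\operatorname{dist}_{L^2(\Omega)}\bigl(\sigma_k(\theta\cdot\tilde x),L_n^k\bigr)$. It is of order $h^{r_0-d/2}=h^{k+1/2}$, not $h^{r_0}$. That is the standard power-function rate for a native space of smoothness $r_0$ on a $d$-dimensional sphere, and it is sharp here. Already for $d=1$, a truncated power $(x-t)_+^k$ whose knot $t$ sits midway between two of the $\sim h$-spaced breakpoints is at $L^2$-distance $\sim h^{k+1/2}$ from $L_n^k$, while $r_0=k+1$.

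Consequently, any bound of the form $\|f-f_n\|_{L^2}\le\int|\psi|P_X\,d\theta\le\|\psi\|_{L^2}\|P_X\|_{L^2(\mathbb{S}^d)}$ yields only $n^{-(2k+1)/(2d)}$. It loses exactly the factor $h^{d/2}\sim n^{-1/2}$ that makes the rate optimal. Your first attempt with local reproducing weights $a_j(\theta)$ fails for the same reason, since it also controls pointwise-in-$\theta$ residuals.

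\emph{The missing idea is duality.} You need to capture cancellation between different $\theta$. With $T\psi:=\int_{\mathbb{S}^d}\sigma_k(\theta\cdot\tilde x)\psi(\theta)\,d\theta$ and $T^*g(\theta)=\int_\Omega\sigma_k(\theta\cdot\tilde x)g(x)\,dx$,
\[
\operatorname{dist}_{L^2(\Omega)}(T\psi,L_n^k)=\sup\bigl\{\langle\psi,T^*g\rangle_{L^2(\mathbb{S}^d)}:\ \|g\|_{L^2(\Omega)}\le 1,\ T^*g(\theta_j^\ast)=0\ \forall j\bigr\}.
\]
Now apply the $L^2$ scattered-zeros inequality $\|u\|_{L^2(\mathbb{S}^d)}\lesssim h^{\tau}\|u\|_{H^\tau(\mathbb{S}^d)}$ (valid for $\tau>d/2$ and $u|_X=0$) with $\tau=r_0$. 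This gives $h^{r_0}\|\psi\|_{L^2}$, once you prove $\|T^*g\|_{H^{r_0}(\mathbb{S}^d)}\lesssim\|g\|_{L^2(\Omega)}$. The heuristic for this mapping property is that the Radon transform gains $\frac{d-1}{2}$ derivatives and the ReLU$^k$ profile gains $k+1$. Equivalently, you must bound the $L^2(\mathbb{S}^d)$ operator norm of $T^*(I-\Pi_n)T$, with $\Pi_n$ the $L^2(\Omega)$-projection onto $L_n^k$, not its diagonal $P_X^2$.

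Funk--Hecke cannot supply this regularity, because $K$ is not zonal. For example, $K(\theta,\theta)=0$ when $\{\theta\cdot\tilde x>0\}$ misses $\Omega$, while $K(e_{d+1},e_{d+1})=|\Omega|$. In any case, eigenvalue decay alone would not give Sobolev regularity of $T^*g$ on $\mathbb{S}^d$.

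The same duality, with $\ell\in(H^s(\Omega))'$ and $\tau=r_0-s>d/2$ (this is where $s\le k$ enters), handles $H^s$ with a single $f_n$. Your derivative-by-derivative argument instead produces a different approximant for each multi-index.

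\emph{The case $r>r_0$ is false.} Drop your plan for $r>r_0$, because the displayed estimate fails there. For $d=1$, every element of $L_n^k$ is a piecewise polynomial of degree $\le k$ on at most $n+1$ intervals. The best $\mathbb{P}_k$-approximation of $x^{k+1}$ on an interval $I$ costs $c_k|I|^{k+3/2}$ in $L^2(I)$. Hence $\inf_{f_n\in L_n^k}\|x^{k+1}-f_n\|_{L^2(-1,1)}\gtrsim n^{-(k+1)}=n^{-r_0/d}$ for every $r$, no matter how smooth $\psi$ is. This saturation means the statement needs the restriction $r\le\frac{d+2k+1}{2}$, which the quotation omits. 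That is also the only regime the paper uses; compare the hypothesis $r+1\le\frac{d+2k+1}{2}$ in Theorem~\ref{thm:approximation-rates-of-div-free-FNS}.
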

\begin{remark}
    Theorem~\ref{thm:linear-approximation-rates} is quoted here in a simplified form. The original result in~\cite{liu_integralrepresentationssobolev_2025} also includes coefficient control for the linear combination, which is not needed in the present work.
\end{remark}

\subsection{Divergence-free vector fields via exterior calculus and potentials}
\label{subsec:prelim_divfree_forms}

We briefly recall the exterior calculus~\cite{warner_foundationsdifferentiablemanifolds_1983,docarmo_differentialformsapplications_1994} viewpoint that unifies the classical $d=2$ stream-function
representation and the $d=3$ vector-potential representation, and extends them to general dimensions.

Throughout, we work in a bounded domain
$\Omega\subset\mathbb{R}^d$ with Lipschitz boundary and, when proving existence of potentials,
we assume in addition that $\Omega$ is starlike with respect to a ball. In particular, $[-1,1]^d$ satisfies all these conditions. For $0 \le m \le d$, let $\Lambda^m$ be the space of $m$-order differential forms on $\mathbb{R}^d$. Note that $\operatorname{dim}\Lambda^m = \operatorname{dim}\Lambda^{d-m} = \binom{d}{m}$.

Let $d:\Lambda^m\to\Lambda^{m+1}$ denote the exterior derivative and
$*:\Lambda^m\to\Lambda^{d-m}$ the Hodge star operator in Euclidean space.
Recall that
\[
    **\omega = (-1)^{m(d-m)}\,\omega \qquad \text{for all }\omega\in\Lambda^m,
\]
so applying $*$ twice returns the original form up to a sign.
All identities below are therefore understood up to this global sign, which does not affect our estimates.

We identify a vector field $u=(u_1,\dots,u_d)^\top:\Omega\to\mathbb{R}^d$ with the $1$-form
\[
    u \;:=\;\sum_{i=1}^d u_i\,dx_i .
\]
A direct computation shows
\begin{equation}\label{eq:div_as_dstar}
    d(*u) \;=\; (\nabla\cdot u)\,dx_1\wedge\cdots\wedge dx_d.
\end{equation}
This motivates the Sobolev divergence-free space
\[
    H^r_{\mathrm{div}}(\Omega)
    \;:=\;
    \Bigl\{
        u\in H^r(\Omega,\mathbb{R}^d):
        \nabla\cdot u = 0 \ \text{in the distributional sense}
    \Bigr\},
\]
equivalently $d(*u)=0$ in $\mathcal{D}'(\Omega)$ by \eqref{eq:div_as_dstar}.

The curl operator is generalised to general dimensions within the framework of differential forms.
For any $(d-2)$-form $\mu$ on $\Omega$, define generalized curl operator
\begin{equation}\label{eq:generalized_curl_def}
    \mathcal{D}\mu \;:=\; *\,d\mu,
\end{equation}
where $\mathcal{D}: \Lambda^{d-2}\to \Lambda^{1}$. Denote the corresponding vector field $u = \mathcal{D}\mu$.
Then $u$ is automatically divergence-free: since $d^2=0$,
\[
    d(*u)=d(**d\mu)=\pm d^2\mu =0,
\]
hence $\nabla\cdot u=0$ by \eqref{eq:div_as_dstar}. This provides a dimension-independent analogue of the
curl construction.

When $d=2$, a $(d-2)$-form is a $0$-form (scalar) $\mu$, and \eqref{eq:generalized_curl_def} reduces to the
stream-function representation:
\[
    u = *d\mu
    =
    \partial_{x_2}\mu\,dx_1 - \partial_{x_1}\mu\,dx_2
    \;\;\Longleftrightarrow\;\;
    u(x)=\bigl(\partial_{x_2}\mu(x),-\partial_{x_1}\mu(x)\bigr).
\]
When $d=3$, a $(d-2)$-form is a $1$-form (vector potential) $\mu=\sum_{i=1}^3 \mu_i\,dx_i$, and
$v=*d\mu$ coincides with the classical curl of the vector potential.

A convenient basis for $(d-2)$-forms is $\{\,*(dx_i\wedge dx_j)\,\}_{1\le i<j\le d}$.
We therefore represent the potential as
\begin{equation}\label{eq:mu_antisym_representation}
    \mu
    =
    \frac12\sum_{i,j=1}^d \mu_{ij}\,*(dx_i\wedge dx_j),
    \qquad \mu_{ji}=-\mu_{ij}.
\end{equation}
A direct calculation (see \cite{richter-powell_neuralconservationlaws_2022}, Appendix A) shows that, up to an overall sign
depending only on $d$ and the orientation convention,
\begin{equation}\label{eq:Dmu_rowwise_div}
    \mathcal{D}\mu
    =
    \sum_{i=1}^d
    \Bigl[
        \sum_{j=1}^d \partial_{x_j}\mu_{ij}
    \Bigr]\,dx_i .
\end{equation}
Equivalently, letting $(\mu)_{ij}\in\mathbb{R}^{d\times d}$ be a antisymmetric matrix field,
the vector field $\mathcal{D}\mu$ is the row-wise divergence,
\begin{align}
    u_i=(\mathcal{D}\mu)_i = \sum_{j=1}^d \partial_j\mu_{ij},\quad i=1,\dots,d. 
\end{align}

The representation $u=*d\mu$ is not only sufficient to be divergence-free but also essentially complete at Sobolev regularity on starlike domains. We will use the following lemma from~\cite{costabel_bogovskiiregularizedpoincare_2010}.

\begin{lemma}[Potential representation and $H^{r+1}$-stability]\label{lem:potential_stable}
Assume $\Omega\subset\mathbb{R}^d$ is bounded and starlike with respect to a ball.
Let $r\ge 0$ and let $u\in H^r_{\mathrm{div}}(\Omega)$.
Then there exists a $(d-2)$-form $\mu\in H^{r+1}(\Omega,\Lambda^{d-2})$ such that
\begin{equation}\label{eq:surj_D}
    u = \mathcal{D}\mu = *\,d\mu \quad\text{in }\Omega,
\end{equation}
and moreover
\begin{equation}\label{eq:stable_potential_bound}
    \|\mu\|_{H^{r+1}(\Omega)}
    \;\le\;
    C_\Omega\,\|u\|_{H^r(\Omega)} .
\end{equation}
\end{lemma}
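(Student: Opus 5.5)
This lemma is taken from Costabel--McIntosh, so the proof plan follows their construction: a regularized Poincaré (homotopy) operator, averaged over a ball, which is a pseudodifferential operator of order $-1$. Let $B$ be a ball with respect to which $\Omega$ is starlike, and fix $\theta\in C_c^\infty(B)$ with $\int\theta=1$. For each $a\in B$, the classical Poincaré operator $K_a$ on $\ell$-forms contracts with the radial field $x-a$ and integrates along the segment from $a$ to $x$. It satisfies the homotopy identity $dK_a+K_ad=\mathrm{id}$ on forms of degree $\ge1$. Averaging gives
\[
R\omega:=\int_B \theta(a)\,K_a\omega\,da .
\]
We apply this to the $(d-1)$-form $\omega:=*u$. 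Since $\nabla\cdot u=0$, identity \eqref{eq:div_as_dstar} gives $d\omega=0$ in $\mathcal D'(\Omega)$. The homotopy identity then yields $\omega=dR\omega+Rd\omega=dR\omega$. Setting $\mu:=\pm R\omega$, a $(d-2)$-form, we get $*d\mu=\pm**u=u$, with the sign fixed by $**=(-1)^{m(d-m)}$. This gives \eqref{eq:surj_D}.

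First I check the algebra for smooth $u$ on $\overline\Omega$. Because $\Omega$ is starlike with respect to every $a\in B$, the segments stay inside $\Omega$, so each $K_a$ is well defined and the homotopy formula holds pointwise. Averaging preserves it because $\int\theta=1$. Next I extend to distributions. Writing out the averaged integral and substituting $y=a+t(x-a)$ turns $R$ into an integral operator
\[
R\omega(x)=\int_\Omega k(x,y-x)\,\lrcorner\,\omega(y)\,dy ,
\]
with a kernel $k(x,z)$ that is smooth in $x$, homogeneous of degree $1-d$ in $z$ near the diagonal, and supported in a cone-like set contained in $\Omega$. Costabel--McIntosh show that such a $k$ is the kernel of a classical pseudodifferential operator of order $-1$ on $\mathbb{R}^d$ whose symbol is in $S^{-1}$. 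The key point is that the support property lets $R$ act on $\omega$ extended by zero, while only the values of $\omega$ in $\Omega$ matter.

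Order $-1$ gives $R:H^r(\Omega)\to H^{r+1}(\Omega)$ boundedly for all real $r$. Boundedness on $\Omega$ follows from boundedness on $\mathbb R^d$ together with a bounded extension operator, for instance Stein's, which works on Lipschitz domains. Hence $\|\mu\|_{H^{r+1}}\le C_\Omega\|*u\|_{H^r}=C_\Omega\|u\|_{H^r}$, which is \eqref{eq:stable_potential_bound}. The identity $dR+Rd=\mathrm{id}$ then passes from smooth forms to $H^r$ by density of $C^\infty(\overline\Omega)$ and continuity of both sides. This step also needs $d:H^{r+1}\to H^r$ to be bounded, which is immediate.

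The main obstacle is the symbol estimate: showing that the averaged kernel really defines an order $-1$ operator uniformly up to the boundary, rather than merely a weakly singular operator. Here I would rely on the explicit computation of Costabel--McIntosh, Proposition 3.2. Expanding $k(x,z)$ in homogeneous components in $z$ and taking the Fourier transform in $z$ gives symbols of order $-1-j$. The smoothness and compact support of $\theta$ make the remainder smoothing. Everything else is bookkeeping with Hodge-star signs, which affect neither the identity up to sign nor the norm bound.
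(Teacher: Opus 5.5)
Your proposal is correct and follows the paper's route. The paper sets $v=*u$, observes $dv=0$, and simply invokes Costabel--McIntosh, Prop.~4.1(i) with $\ell=d-1$, $s=r$; you instead unpack that proposition through the averaged Poincar\'e operator $R$, the homotopy identity $dR+Rd=\mathrm{id}$ on forms of degree $\ge 1$, and the order $-1$ pseudodifferential bound. Two small tidy-ups: $R$ should act on an arbitrary extension of $\omega$ (the zero extension is generally not in $H^r$ once $r\ge 1/2$), with the support property of the kernel guaranteeing that the result on $\Omega$ does not depend on the extension; and this same property, not Stein's operator, is what gives boundedness $H^{r-1}(\Omega)\to H^{r}(\Omega)$ on the negative-order spaces your density step needs when $r<1$.
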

\begin{proof}
Set $v:=*u\in H^r(\Omega,\Lambda^{d-1})$. Since $u\in H^r_{\mathrm{div}}(\Omega)$, we have $d v = d(*u)=0$
by \eqref{eq:div_as_dstar}. Applying \cite[Prop.~4.1(i)]{costabel_bogovskiiregularizedpoincare_2010} with $\ell=d-1$ and $s=r$,
there exists $\tilde\mu\in H^{r+1}(\Omega,\Lambda^{d-2})$ such that
\begin{align}
    d\tilde\mu =v, \quad
    \|\tilde\mu\|_{H^{r+1}} \le C_\Omega\|v\|_{H^r}.
\end{align}
Finally, $u$ is recovered as $u=\pm *v = \pm *d\tilde\mu$. Absorbing the sign into the definition of $\mu$ yields \eqref{eq:surj_D} and
\eqref{eq:stable_potential_bound}.
\end{proof}

\section{Integral Representation of Divergence-Free Vector Fields via \texorpdfstring{$\mathrm{ReLU}^k$}{ReLUk} Features}\label{sec:integral-representation}

We construct and analyze divergence-free finite neuron spaces using the $\mathrm{ReLU}^k$ activation
$\sigma_k(t)=\max(0,t)^k$ in dimension $d\ge2$. Throughout we assume $k\ge1$ so that the required derivatives are well-defined.

Our analysis connects approximation of finite neuron spaces in Sobolev spaces and the RKHS viewpoint for ReLU$^k$ networks~\cite{liu_integralrepresentationssobolev_2025, Liu2025NTK}. We work on a bounded Lipschitz domain $\Omega\subset\mathbb{R}^d$ that is starlike with respect to a ball~\cite{costabel_bogovskiiregularizedpoincare_2010}.

We prove an integral representation theorem for $H^r_{\mathrm{div}}(\Omega)$.

\begin{theorem}[Integral representation of $H^r_{\mathrm{div}}(\Omega)$]
\label{thm:Integral-representation-of-FNS}
Let $k\ge 1$, and let $\Omega \subset \mathbb{R}^d$ be a bounded domain with Lipschitz boundary and starlike with respect to a ball.
Set $r=\frac{d+2k-1}{2}$.
Then for every $u\in H^r_{\mathrm{div}}(\Omega)$, there exist densities $\psi_{ij}\in L^2(\mathbb{S}^d), \  1\le i<j\le d,$
such that
\[
u(x)=\int_{\mathbb{S}^d}\sum_{1\le i<j\le d}\Phi_{\theta,ij}(x)\psi_{ij}(\theta)\,d\theta,
\]
where $\Phi_{\theta,ij}(x) = \partial_j\sigma_k(\theta\cdot \tilde x)e_i - \partial_i\sigma_k(\theta\cdot \tilde x)e_j$.
Moreover,
\[
\|u\|_{H^r(\Omega)}
\simeq
\inf\left\{
\left(
\sum_{1\le i<j\le d}\|\psi_{ij}\|_{L^2(\mathbb{S}^d)}^2
\right)^{1/2}
:\ 
u=\int_{\mathbb{S}^d}\sum_{1\le i<j\le d}\Phi_{\theta,ij}(x)\psi_{ij}(\theta)\,d\theta
\right\}.
\]
In particular, one may choose the representing family so that
\[
\left(
\sum_{1\le i<j\le d}\|\psi_{ij}\|_{L^2(\mathbb{S}^d)}^2
\right)^{1/2}
\lesssim
\|u\|_{H^r(\Omega)}.
\]
\end{theorem}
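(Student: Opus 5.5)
The plan is to pass through the antisymmetric potential: lift $u$ to a potential, expand each potential entry with the scalar ReLU$^k$ representation, then differentiate under the integral. Note that $r+1=\frac{d+2k+1}{2}$, which is exactly the regularity index in Theorem~\ref{thm:integral-rep-scalar}.

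\textbf{Upper bound (existence with control).} Given $u\in H^r_{\mathrm{div}}(\Omega)$, apply Lemma~\ref{lem:potential_stable} to obtain $\mu\in H^{r+1}(\Omega,\Lambda^{d-2})$ with $u=\mathcal{D}\mu$ and $\|\mu\|_{H^{r+1}}\le C_\Omega\|u\|_{H^r}$. Write $\mu$ in the basis $\{*(dx_i\wedge dx_j)\}_{i<j}$ as in \eqref{eq:mu_antisym_representation}. This gives scalar coefficients $\mu_{ij}\in H^{r+1}(\Omega)$ for $i<j$, extended antisymmetrically. Since the basis is orthonormal up to sign, $\sum_{i<j}\|\mu_{ij}\|_{H^{r+1}}^2\simeq\|\mu\|_{H^{r+1}}^2$.

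For each $i<j$, Theorem~\ref{thm:integral-rep-scalar} yields $\psi_{ij}\in L^2(\mathbb{S}^d)$ with
\[
\mu_{ij}(x)=\int_{\mathbb{S}^d}\sigma_k(\theta\cdot\tilde x)\psi_{ij}(\theta)\,d\theta,\qquad \|\psi_{ij}\|_{L^2}\lesssim\|\mu_{ij}\|_{H^{r+1}}.
\]
Insert this into the row-wise divergence formula $u_l=\sum_m\partial_m\mu_{lm}$ from \eqref{eq:Dmu_rowwise_div}. The pair $(i,j)$ with $i<j$ contributes $\partial_j\mu_{ij}$ to component $i$. Through $\mu_{ji}=-\mu_{ij}$ it also contributes $-\partial_i\mu_{ij}$ to component $j$. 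Hence $u=\sum_{i<j}\bigl(\partial_j\mu_{ij}e_i-\partial_i\mu_{ij}e_j\bigr)$, with any global sign absorbed into $\psi_{ij}$.

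To pass $\partial_j$ under the integral, use that $k\ge1$. Then $\partial_j\sigma_k(\theta\cdot\tilde x)=k\,w_j\sigma_{k-1}(\theta\cdot\tilde x)$ is bounded on $\Omega\times\mathbb{S}^d$, and $\psi_{ij}\in L^2\subset L^1$ on the compact sphere. Dominated convergence therefore justifies differentiating under the integral in the classical sense for $k\ge2$. For $k=1$, $\sigma_0$ is a bounded Heaviside function, so I would argue distributionally: test against $\varphi\in C_c^\infty(\Omega)$ and apply Fubini. This gives the claimed representation, and the bound follows by chaining the two estimates.

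\textbf{Lower bound.} Suppose $u=\int\sum_{i<j}\Phi_{\theta,ij}\psi_{ij}\,d\theta$ for some family $\psi_{ij}$. Define $\mu_{ij}:=\int\sigma_k(\theta\cdot\tilde x)\psi_{ij}\,d\theta$. By the converse direction of Theorem~\ref{thm:integral-rep-scalar}, $\|\mu_{ij}\|_{H^{r+1}}\lesssim\|\psi_{ij}\|_{L^2}$. The same interchange of derivative and integral shows $u=\mathcal{D}\mu$. Since $\mathcal{D}$ is a first-order operator, $\|u\|_{H^r}\le C\|\mu\|_{H^{r+1}}\lesssim(\sum_{i<j}\|\psi_{ij}\|^2)^{1/2}$. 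Taking the infimum over all representing families gives the two-sided equivalence.

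\textbf{Main obstacle.} The delicate step is the rigorous identification of $\partial_j\mu_{ij}$ with the integral of $\partial_j\sigma_k$, particularly for $k=1$ and for non-integer $r$. I expect the distributional Fubini argument above to settle this. Everything else is a composition of Lemma~\ref{lem:potential_stable} with Theorem~\ref{thm:integral-rep-scalar}, and the constants depend only on $\Omega$, $d$ and $k$.
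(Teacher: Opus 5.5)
Your proposal is correct and follows essentially the same route as the paper: you lift $u$ to an antisymmetric potential via Lemma~\ref{lem:potential_stable}, represent each $\mu_{ij}$ by Theorem~\ref{thm:integral-rep-scalar}, regroup the row-wise divergence pairwise into $\Phi_{\theta,ij}\psi_{ij}$, and obtain the reverse inequality from the continuity of $\mathcal{D}:H^{r+1}\to H^{r}$. Your explicit justification for moving $\partial_j$ inside the integral (dominated convergence for $k\ge 2$, and a distributional Fubini argument for $k=1$) supplies a step that the paper uses without comment.
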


\begin{proof}
The proof relies on the isomorphism established in Lemma~\ref{lem:potential_stable} and the scalar integral representation.

First, recall the smoothness relationship: $r = \frac{d+2k-1}{2}$ implies $r+1 = \frac{d+2k+1}{2}$. By Theorem \ref{thm:integral-rep-scalar}, the scalar Sobolev space $H^{r+1}(\Omega)$ admits the integral representation using ReLU$^k$ features.

Let $u \in H^r_{\mathrm{div}}(\Omega)$. By Lemma~\ref{lem:potential_stable}, there exists a potential $\mu \in H^{r+1}(\Omega,\Lambda^{d-2})$ such that $u = \mathcal{D}\mu$. Fixing the basis \(\{*(dx_i\wedge dx_j)\}_{i<j}\) of \(\Lambda^{d-2}\), the Sobolev norm of \(\mu\) is equivalent to the Euclidean sum of the Sobolev norms of its scalar coefficients \(\mu_{ij}\). Then, for each component $\mu_{ij} \in H^{r+1}(\Omega)$, Theorem~\ref{thm:integral-rep-scalar} guaranties the existence of densities $\psi_{ij} \in L^2(\mathbb{S}^d)$ such that:
\begin{equation}
    \mu_{ij}(x) = \int_{\mathbb{S}^{d}}\sigma_{k}(\theta\cdot\tilde{x})\psi_{ij}(\theta)d\theta.
\end{equation}
To ensure $\mu$ is antisymmetric, we enforce $\psi_{ji} = -\psi_{ij}$ (and $\psi_{ii} = 0$).
Substituting this into the divergence formula:
\begin{equation}
    u(x) = \mathcal{D}\mu(x) \implies u_p(x) = \sum_{q=1}^d \partial_q \mu_{pq}(x) = \sum_{q=1}^d \int_{\mathbb{S}^{d}} \partial_q \sigma_{k}(\theta\cdot\tilde{x})\psi_{pq}(\theta) d\theta.
\end{equation}
We rewrite the sum by grouping terms for each unique pair $\{i,j\}$ with $1 \le i < j \le d$. The contribution of the pair $(i,j)$ to the vector field involves terms with $\psi_{ij}$ and $\psi_{ji}$:
\begin{equation}
    \partial_j \sigma_k \psi_{ij} e_i + \partial_i \sigma_k \psi_{ji} e_j = (\partial_j \sigma_k e_i - \partial_i \sigma_k e_j) \psi_{ij} = \Phi_{\theta, ij}(x) \psi_{ij}(\theta).
\end{equation}
Summing over all pairs yields the stated integral representation.

For the coefficient bound, write
\[
\mu=\sum_{1\le i<j\le d}\mu_{ij}\,*(dx_i\wedge dx_j).
\]
By Theorem~\ref{thm:integral-rep-scalar}, the densities may be chosen so that
\[
\sum_{1\le i<j\le d}\|\psi_{ij}\|_{L^2(\mathbb{S}^d)}^2
\lesssim
\sum_{1\le i<j\le d}\|\mu_{ij}\|_{H^{r+1}(\Omega)}^2
\simeq
\|\mu\|_{H^{r+1}(\Omega)}^2
\lesssim
\|u\|_{H^r(\Omega)}^2.
\]
Conversely, let $\{\psi_{ij}\}_{1\le i<j\le d}\subset L^2(\mathbb{S}^d)$ be any family defining the above representation, and define
\[
\mu_{ij}(x):=\int_{\mathbb{S}^d}\sigma_k(\theta\cdot\tilde{x})\psi_{ij}(\theta)\,d\theta,
\qquad
\mu_{ji}:=-\mu_{ij},\quad \mu_{ii}:=0.
\]
Then $\mu\in H^{r+1}(\Omega,\Lambda^{d-2})$, and by the same computation as above, the represented field is exactly $u=\mathcal D\mu$.
Moreover,
\[
\|\mu\|_{H^{r+1}(\Omega)}^2
\lesssim
\sum_{1\le i<j\le d}\|\psi_{ij}\|_{L^2(\mathbb{S}^d)}^2.
\]
Since $\mathcal D:H^{r+1}(\Omega,\Lambda^{d-2})\to H^r(\Omega,\Lambda^1)$ is continuous, it follows that
\[
\|u\|_{H^r(\Omega)}
\lesssim
\left(
\sum_{1\le i<j\le d}\|\psi_{ij}\|_{L^2(\mathbb{S}^d)}^2
\right)^{1/2}.
\]
This proves the norm characterization.
\end{proof}

The integral representation in Theorem~\ref{thm:Integral-representation-of-FNS} is the key structural result behind the divergence-free finite neuron space. It identifies the continuous feature dictionary underlying $V_n^k$ and shows that every field in $H_{\mathrm{div}}^r(\Omega)$ can be generated through antisymmetric potentials built from ReLU$^k$ features. In particular, the divergence-free constraint is encoded directly at the level of the representation, rather than being imposed afterward by a penalty term or a Lagrange multiplier. This is important for two reasons. First, it provides the analytic bridge from the scalar approximation theory to the divergence-free setting, since approximation of the potential coefficients by scalar finite neuron spaces can be transferred to approximation of the vector field through the continuous operator $\mathcal{D}$. Second, it shows that the discrete space $V_n^k$ should be viewed as a finite-dimensional discretization of this continuous representation; consequently, once the inner parameters are chosen quasi-uniformly, the optimal approximation rates from the scalar theory can be inherited by the divergence-free space. In this sense, Theorem~\ref{thm:Integral-representation-of-FNS} is the mechanism that connects exact structure preservation with sharp approximation estimates.

\section{Divergence-Free Finite Neuron Spaces and Optimal Error Rates}\label{sec:error-rates}
\subsection{Divergence-free finite neuron spaces}

\begin{definition}[Divergence-free finite neuron space]
Let $\{\theta_{\ell}^{*}\}_{\ell=1}^{n}\subset\mathbb{S}^{d}$ be a quasi-uniform set of parameters, and define the scalar finite neuron space (FNS)
\[
    \FNS := \mathrm{span}\{\phi_{\ell}\}_{\ell=1}^{n},
    \qquad 
    \phi_{\ell}(x):=\sigma_{k}(\theta_{\ell}^{*}\cdot\tilde{x}),
\]
where $\tilde{x}:=(x,1)\in\mathbb{R}^{d+1}$ and $\theta_\ell^*=(w_\ell^*,b_\ell^*)\in\mathbb{R}^{d}\times \mathbb{R}$.
A \emph{potential matrix} is a skew-symmetric matrix field $\mu(x)\in\mathbb{R}^{d\times d}$ whose entries satisfy
\[
    \mu_{ij}\in\FNS \ \ (i<j),\qquad \mu_{ji}=-\mu_{ij},\qquad \mu_{ii}=0.
\]
We define the divergence-free finite neuron space as the image of such potentials under the differential operator $\mathcal{D}$:
\[
    \VFNS
    :=\Bigl\{\, \mathcal{D}\mu \ :\ \mu \text{ is skew-symmetric with } \mu_{ij}\in\FNS \ (i<j)\Bigr\}.
\]
\end{definition}

Equivalently, writing $\mu_{ij}(x)=\sum_{\ell=1}^n a_{\ell,ij}\phi_\ell(x)$ for $i<j$ yields
$n\cdot \binom{d}{2}$ outer linear parameters $\{a_{\ell,ij}\}$.
For each $\ell$ and $1\le i<j\le d$, the coefficient $a_{\ell,ij}$ multiplies the vector-valued basis function
$\Phi_{\ell,ij}(x):=\mathcal{D}\mu(x)$ generated by setting $\mu_{ij}=\phi_\ell$, $\mu_{ji}=-\phi_\ell$, and all other entries zero. In particular,
\begin{align}\label{eq:basis-of-FNS}
    \Phi_{\ell,ij}(x)
    &= \partial_j \phi_{\ell}(x)\, e_i - \partial_i \phi_{\ell}(x)\, e_j \\
    &= k\,\sigma_{k-1}(\theta_{\ell}^{*}\cdot\tilde{x})\,
       \bigl(w_{\ell,j}^* e_i - w_{\ell,i}^* e_j\bigr),
\nonumber
\end{align}
and $\VFNS=\mathrm{span}\{\Phi_{\ell,ij}:\ \ell=1,\dots,n,\ 1\le i<j\le d\}$.

\subsection{Convergence Analysis}
We leverage the optimal approximation rates established for the scalar FNS when using quasi-uniform points, which effectively serve as an optimal quadrature rule for the integral representation.

\begin{theorem}[Convergence of $\VFNS$ for General Dimension $d \ge 2$]\label{thm:approximation-rates-of-div-free-FNS}
    Let $\Omega \subset \mathbb{R}^d$ be a bounded domain with Lipschitz boundary and starlike with respect to a ball. Let $u \in H^r_{\mathrm{div}}(\Omega)$. Assume $k \ge 1$, $r+1 \le \frac{d+2k+1}{2}$ and $0\le s \le \min\{k-1, r\}$. Let $\VFNS$ be the Divergence-Free FNS constructed with quasi-uniform parameters. 
    such that
    \begin{equation}
        \inf_{u_n\in V_{n}^{k}}\|u-u_n\|_{H^s(\Omega)}
    \lesssim
    n^{-\frac{r-s}{d}}\|u\|_{H^r(\Omega)}.
    \end{equation}

\end{theorem}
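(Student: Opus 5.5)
The plan is to transfer the scalar rate of Theorem~\ref{thm:linear-approximation-rates} to the divergence-free space through the potential lifting of Lemma~\ref{lem:potential_stable}. Exactness of the constraint is then automatic: every element of $\VFNS$ is of the form $\mathcal{D}\mu_n$.

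\textbf{Step 1 (lifting).} Given $u\in H^r_{\mathrm{div}}(\Omega)$, apply Lemma~\ref{lem:potential_stable}. This gives a skew-symmetric potential $\mu$ with $u=\mathcal{D}\mu$ and $\|\mu\|_{H^{r+1}(\Omega)}\le C_\Omega\|u\|_{H^r(\Omega)}$. In the basis $\{*(dx_i\wedge dx_j)\}_{i<j}$ this means each scalar coefficient satisfies $\mu_{ij}\in H^{r+1}(\Omega)$, with $\sum_{i<j}\|\mu_{ij}\|_{H^{r+1}}^2\lesssim\|u\|_{H^r}^2$.

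\textbf{Step 2 (scalar approximation).} Apply Theorem~\ref{thm:linear-approximation-rates} to each $\mu_{ij}$, using smoothness index $r+1$ and target index $s+1$. Its hypothesis $0\le s+1\le\min\{k,r+1\}$ is exactly our assumption $0\le s\le\min\{k-1,r\}$. The assumption $r+1\le\frac{d+2k+1}{2}$ keeps us in the range where the scalar theorem is stated: it is built on the RKHS $H^{\frac{d+2k+1}{2}}$ and is used for smoothness up to that order. This yields $\mu_{ij,n}\in\FNS$ with
\[
\|\mu_{ij}-\mu_{ij,n}\|_{H^{s+1}(\Omega)}\lesssim n^{-\frac{(r+1)-(s+1)}{d}}\|\mu_{ij}\|_{H^{r+1}(\Omega)}=n^{-\frac{r-s}{d}}\|\mu_{ij}\|_{H^{r+1}(\Omega)}.
\]
All components use the same quasi-uniform set $\{\theta^*_\ell\}$, which is what the definition of $\VFNS$ requires.

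\textbf{Step 3 (push forward).} Define the skew-symmetric $\mu_n$ by $(\mu_n)_{ij}=\mu_{ij,n}$ for $i<j$, and set $u_n:=\mathcal{D}\mu_n\in\VFNS$. Since $\mathcal{D}$ is a first-order operator, the row-wise divergence $u_i=\sum_j\partial_j\mu_{ij}$, it is bounded from $H^{s+1}$ to $H^s$. Linearity then gives
\[
\|u-u_n\|_{H^s}=\|\mathcal{D}(\mu-\mu_n)\|_{H^s}\lesssim\Bigl(\sum_{i<j}\|\mu_{ij}-\mu_{ij,n}\|_{H^{s+1}}^2\Bigr)^{1/2}\lesssim n^{-\frac{r-s}{d}}\|\mu\|_{H^{r+1}}\lesssim n^{-\frac{r-s}{d}}\|u\|_{H^r}.
\]

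\textbf{Main obstacle.} The delicate point is Step 2. Theorem~\ref{thm:linear-approximation-rates} must hold in a form that covers the indices $(r+1,s+1)$ for non-integer Sobolev orders. We also need the quasi-uniformity and mesh-norm hypotheses to give $h\lesssim n^{-1/d}$ with constants independent of the component $(i,j)$; this is fine because there are only $\binom d2$ components. If the quoted scalar result were only available for $r+1=\frac{d+2k+1}{2}$, the fallback is to use the integral representation of Theorem~\ref{thm:Integral-representation-of-FNS} together with a quasi-uniform quadrature argument on $\mathbb{S}^d$, then interpolate between that endpoint and the trivial $r=s$ bound. For Steps 1 and 3, boundedness of $\mathcal{D}$ and the $H^{r+1}$-stability of the Costabel--McIntosh lifting for fractional $r$ are taken directly from the cited lemma.
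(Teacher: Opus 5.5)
Your proposal is correct and follows the paper's proof essentially step for step: lift $u$ to a skew-symmetric potential via Lemma~\ref{lem:potential_stable}, approximate each coefficient $\mu_{ij}$ in $H^{s+1}$ using the scalar rate of Theorem~\ref{thm:linear-approximation-rates} with indices $(r+1,s+1)$, extend antisymmetrically, and push the error forward through the bounded first-order operator $\mathcal{D}:H^{s+1}\to H^{s}$. The differences are cosmetic: you sum the component errors in $\ell^2$ rather than $\ell^1$, which is equivalent for the finitely many $\binom{d}{2}$ components, and your fallback via the integral representation is unnecessary because the paper invokes the scalar theorem directly in the stated range.
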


\begin{proof}
Since $u \in H^r_{\mathrm{div}}(\Omega)$, by Lemma~\ref{lem:potential_stable}, there exists an antisymmetric tensor field $\mu = (\mu_{ij})_{i,j=1}^d$ with components $\mu_{ij} \in H^{r+1}(\Omega)$ such that $u$ is the divergence of $\mu$:
\begin{equation}
    u_i(x) = \sum_{j=1}^d \frac{\partial \mu_{ij}(x)}{\partial x_j},
\end{equation}
 and
\[
\sum_{1\le i<j\le d}\|\mu_{ij}\|_{H^{r+1}(\Omega)}^2
\simeq
\|\mu\|_{H^{r+1}(\Omega)}^2
\lesssim
\|u\|_{H^r(\Omega)}^2.
\]

We apply the approximation results for scalar FNS (Theorem~\ref{thm:linear-approximation-rates}) to each component $\mu_{ij}$ of the potential. For each $\mu_{ij} \in H^{r+1}(\Omega), i<j$, there exists an approximation $\tilde{\mu}_{ij} \in \FNS$ (constructed using quasi-uniform parameters) such that:
\begin{equation}
    \|\mu_{ij} - \tilde{\mu}_{ij}\|_{H^{1+s}(\Omega)} \lesssim n^{-\frac{(r+1)-1-s}{d}}\|\mu_{ij}\|_{H^{r+1}(\Omega)} = n^{-\frac{r-s}{d}}\|\mu_{ij}\|_{H^{r+1}(\Omega)}.
\end{equation}
Then we extend to all $1\le i,j\le d$ by $\tilde\mu_{ji}:=-\tilde\mu_{ij}$ and $\tilde\mu_{ii}=0$ to keep $\tilde{\mu}$ antisymmetric.
We construct the approximate vector field $v_n$ using these approximate potentials:
\begin{equation}
    (u_n)_i = \sum_{j=1}^d \frac{\partial \tilde{\mu}_{ij}}{\partial x_j}.
\end{equation}
By linearity, $u_n \in \VFNS$. Since $\mathcal D=*d$ is continuous from $H^{1+s}(\Omega,\Lambda^{d-2})$ to $H^s(\Omega,\Lambda^1)$, we have
\begin{align}
    \|u - u_n\|_{H^{s}(\Omega)}
    &\lesssim \sum_{i,j=1}^d \|\mu_{ij} - \tilde{\mu}_{ij}\|_{H^{1+s}(\Omega)} \\
    &\lesssim n^{-\frac{r-s}{d}} \sum_{i,j=1}^d \|\mu_{ij}\|_{H^{r+1}(\Omega)}
    \lesssim n^{-\frac{r-s}{d}}\|u\|_{H^r(\Omega)}.
\end{align}
\end{proof}

\begin{remark}
Measured in degrees of freedom, standard finite element rates necessarily carry a factor $1/d$ in the exponent. Indeed, on a uniform mesh in dimension $d$, one has $N\sim h^{-d}$, so for a velocity approximation of order $p$, the estimates $O(h^{p-1})$ in $H^1$ and $O(h^{p})$ in $L^2$ become $O(N^{-(p-1)/d})$ and $O(N^{-p/d})$, respectively. By contrast, Theorem~\ref{thm:approximation-rates-of-div-free-FNS} yields divergence-free FNS rates with a dimension-independent leading term $1/2$, so the guaranteed convergence is always at least $O(n^{-1/2})$. In particular, when $d=2$, the $L^2$ rate of the Taylor--Hood $P_2/P_1$ method is $N^{-3/2}$, whereas the $k=3$ divergence-free FNS yields the sharper rate $n^{-7/4}$. The numerical results in Section~\ref{subsubsec:regularized-lid} are consistent with this comparison.
\end{remark}

\begin{remark}[General domains and harmonic fields]
Lemma~\ref{lem:potential_stable} uses that $\Omega$ is starlike (hence topologically trivial), so that every closed $(d-1)$-form is exact. On a general bounded Lipschitz domain, a divergence-free field admits instead a Hodge-type decomposition
\[
  u = \mathcal D\mu + h,
\]
where $h$ belongs to a finite-dimensional harmonic subspace determined by the de~Rham cohomology of $\Omega$. Accordingly, one may extend the present divergence-free FNS by adjoining a basis of harmonic fields and solve for the potential and harmonic coefficients simultaneously, see \cite{arnold_finiteelementexterior_2006,costabel_bogovskiiregularizedpoincare_2010}.
\end{remark}
% =========================
\section{Methods}\label{sec:methods}
% =========================
In this section, we consider $d\in\{2,3\}$. Let $\Omega=[-1,1]^d$, a bounded domain with Lipschitz boundary and starlike with respect to a ball.

\subsection{Divergence-free vector field \texorpdfstring{$L^2$}{L2} approximation}
Given a divergence-free target vector field
$\mathbf{u}^\dagger:\Omega\to\mathbb{R}^d$ with $\nabla\cdot \mathbf{u}^\dagger=0$,
we approximate it in the divergence-free FNS $\VFNS$.
We solve the following projection problem:
\begin{equation}
  \hat{\mathbf{u}}_n
  \;=\;
  \arg\min_{\mathbf{v}\in \VFNS}
  \;\|\mathbf{v}-\mathbf{u}^\dagger\|_{L^2(\Omega)}^2
  \;=\;
  \arg\min_{\mathbf{v}\in \VFNS}
  \int_{\Omega} |\mathbf{v}(x)-\mathbf{u}^\dagger(x)|^2\,dx.
  \label{eq:l2_projection}
\end{equation}
Since $\VFNS$ is finite-dimensional and linear, \eqref{eq:l2_projection} reduces to a convex quadratic problem.

\subsection{Solving Stokes Equation in divergence-free FNS}

A main advantage of divergence-free FNS is that the incompressibility constraint is built into the trial space. This makes it natural to work with the reduced velocity formulation of incompressible problems, in which the pressure does not need to be approximated explicitly. As an example, we consider the steady Stokes problem on $\Omega=[-1,1]^d$ with homogeneous Dirichlet boundary conditions:
\begin{align}
    \begin{cases}
        -\nu \Delta \mathbf{u} + \nabla p = \mathbf{f} \qquad \text{in } \Omega, \\
        \nabla \cdot \mathbf{u} = 0 \qquad \text{in } \Omega, \\
        \mathbf{u} = \mathbf{0} \qquad \text{on } \partial\Omega .
    \end{cases}
\end{align}

Let
\[
  H^1_{\mathrm{div}}(\Omega)
  :=
  \{\mathbf{v}\in H^1(\Omega,\mathbb{R}^d): \nabla\cdot \mathbf{v}=0\},
\]
and define the zero-trace divergence-free space
\[
  V := H^1_{\mathrm{div}}(\Omega)\cap H_0^1(\Omega,\mathbb{R}^d).
\]
The standard mixed weak formulation seeks $(\mathbf{u},p)\in H_0^1(\Omega,\mathbb{R}^d)\times L_0^2(\Omega)$ such that
\[
  \nu\int_\Omega \nabla \mathbf{u}:\nabla \mathbf{v}\,dx
  - \int_\Omega p\,\nabla\cdot \mathbf{v}\,dx
  = \int_\Omega \mathbf{f}\cdot \mathbf{v}\,dx
  \qquad
  \forall\,\mathbf{v}\in H_0^1(\Omega,\mathbb{R}^d),
\]
together with
\[
  \int_\Omega q\,\nabla\cdot \mathbf{u}\,dx = 0
  \qquad
  \forall\,q\in L_0^2(\Omega).
\]
If one restricts the test space to $V$, then $\nabla\cdot \mathbf{v}=0$ by construction, so the pressure term vanishes identically. Hence the velocity component is characterized by the reduced variational problem
\[
  \nu\int_\Omega \nabla \mathbf{u}:\nabla \mathbf{v}\,dx
  =
  \int_\Omega \mathbf{f}\cdot \mathbf{v}\,dx
  \qquad
  \forall\,\mathbf{v}\in V.
\]
Equivalently, $\mathbf{u}$ is the unique minimizer of the reduced energy
\[
  \frac{\nu}{2}\int_\Omega |\nabla \mathbf{v}|^2\,dx
  - \int_\Omega \mathbf{f}\cdot \mathbf{v}\,dx
\]
over $V$. In this sense, the pressure is not discarded from the continuous Stokes model; rather, it acts as the Lagrange multiplier for the incompressibility constraint in the mixed formulation, and disappears after passing to the divergence-free reduced formulation.

In our implementation, the divergence constraint is enforced exactly by the trial space, while the homogeneous boundary condition is imposed weakly by a penalty term. Given viscosity $\nu>0$, forcing $\mathbf{f}$, and a penalty parameter $\varepsilon>0$, we minimize
\begin{equation}
  \mathbf{u}_\varepsilon
  \;=\;
  \arg\min_{\mathbf{v}\in H^1_{\mathrm{div}}(\Omega)}
  J_\varepsilon(\mathbf{v}),
  \qquad
  J_\varepsilon(\mathbf{v})
  :=
  \frac{\nu}{2}\int_\Omega |\nabla\mathbf{v}|^2\,dx
  -\int_\Omega \mathbf{f}\cdot \mathbf{v}\,dx
  +\frac{1}{2\varepsilon}\int_{\partial\Omega} |\mathbf{v}|^2\,ds.
  \label{eq:stokes_energy}
\end{equation}
This is a penalized version of the reduced divergence-free formulation above: incompressibility is enforced exactly through the admissible space, whereas the zero boundary condition is enforced softly through the boundary term.

We then discretize $H^1_{\mathrm{div}}(\Omega)$ by a divergence-free finite neuron space $\VFNS\subset H^1_{\mathrm{div}}(\Omega)$ and compute
\begin{equation}
  \hat{\mathbf{u}}_n
  \;=\;
  \arg\min_{\mathbf{v}\in \VFNS} J_\varepsilon(\mathbf{v}),
  \label{eq:stokes_fns_min}
\end{equation}
which again yields a convex quadratic problem in the linear coefficients. Since every function in $\VFNS$ is divergence-free by construction, no separate pressure variable or discrete inf-sup compatibility condition is required.

\subsection{Gauss-Legendre quadrature for Inner products}
We approximate all inner products over $\Omega$ and $\partial\Omega$ by deterministic quadrature.

\paragraph{Integrals on $\Omega=[-1,1]^d$.}
We partition each coordinate interval into $N_x$ subintervals of size $h=2/N_x$,
forming $N_x^d$ subcubes. On each subcube, we apply a tensor-product Gauss--Legendre rule of
order \texttt{order} in each dimension. For a generic integrand $g$,
\[
  \int_\Omega g(x)\,dx
  \;\approx\;
  \sum_{q=1}^{N_q} w_q\,g(x_q),
  \qquad
  N_q = N_x^d \cdot (\texttt{order})^d,
\]
where $(x_q,w_q)$ are the piecewise-tensor Gauss nodes and weights mapped to physical coordinates. Inspired by~\cite{mao_solvinghighdimensionalpdes_2026}, we choose $N_x=200, \texttt{order}=5$ and $N_x=40, \texttt{order}=3$ for $d=2,3$ respectively.
To control peak memory usage, we evaluate $g(x_q)$ in mini-batches (chunks) and accumulate the quadrature sums incrementally.

\paragraph{Boundary integrals on $\partial\Omega$.}
The boundary $\partial\Omega$ consists of $2d$ faces $\{x_i=\pm 1\}\cong [-1,1]^{d-1}$.
We apply the same piecewise tensor Gauss--Legendre rule on each face (in dimension $d-1$) and sum
the contributions:
\[
  \int_{\partial\Omega} h(x)\,ds
  \;\approx\;
  \sum_{\text{faces }F}\ \sum_{m} \omega_{F,m}\, h(x_{F,m}).
\]
This is used for the penalty term $\int_{\partial\Omega}|\mathbf{u}_n|^2\,ds$.

\subsection{Solving the induced least square problem}
Given $n$ inner parameters $\{\theta^*_\ell\}_{\ell=1}^n$, define
\[
  \VFNS := \mathrm{span}\big\{\Phi_{\ell,ij}:\ \ell=1,\dots,n,\ 1\le i<j\le d\big\},
  \qquad
  P := \dim(\VFNS) = n\binom{d}{2}.
\]
We parameterize $\mathbf{u}_n\in \VFNS$ as
\[
  \mathbf{u}_n(x)=\sum_{\ell=1}^n\sum_{1\le i<j\le d} a_{\ell,ij}\,\Phi_{\ell,ij}(x)
  \;=\;
  \sum_{p=1}^{P} a_p\,\Phi_p(x),
\]
where $\{\Phi_p\}_{p=1}^P$ is any fixed ordering of the basis functions.

Let $\{(x_q,\omega_q)\}_{q=1}^{N_q}$ be the Gauss--Legendre quadrature nodes and weights for volume and boundary integrals. We use the same notation for simplicity.
Both the $L^2$ projection and the penalized Stokes energy minimization reduce to a weighted least-squares problem of the form
\begin{equation}\label{eq:weighted_ls}
  \hat a
  \;=\;
  \arg\min_{a\in\mathbb{R}^P}\ \|W(Ha-y)\|_2^2,
\end{equation}
where $H\in\mathbb{R}^{N_q\times P}$ is a feature matrix evaluated on quadrature nodes, $y\in\mathbb{R}^{N_q}$ is the corresponding target vector, and $W$ is a diagonal weight matrix typically involving $\sqrt{\omega_q}$
(and also $\sqrt{\nu}$, $\sqrt{1/\varepsilon}$ for Stokes).
We have two basic strategies to solve this least-squares problem~\cite{golub_matrixcomputations_2013}.
\subsubsection{Normal-equation approach}
A classic strategy is to assemble the normal system
\begin{equation}\label{eq:normal_eq}
  A\,\hat a=b,
  \qquad
  A:=H^\top W^\top W H\in\mathbb{R}^{P\times P},
  \quad
  b:=H^\top W^\top W y\in\mathbb{R}^{P}.
\end{equation}
This corresponds to assembling the mass matrix of basis functions under the relevant inner product and yields a semi-positive definite system.
The main practical advantage is that the final linear system has size only $P\times P$, independent of the number of quadrature nodes $N$.
In our implementation, $A$ and $b$ are assembled by streaming over quadrature nodes in chunks to control peak memory usage.

However, forming the normal equations squares the condition number in the $2$-norm: roughly, $\kappa_2(A)\approx \kappa_2(W H)^2$.
Thus, when $WH$ is already moderately ill-conditioned, $A$ may become numerically hard to solve accurately,
even though it is much smaller. We conduct a comparison in Section~\ref{subsubsec:mass_ablation} and Figure~\ref{fig:mass-ablation}.

\subsubsection{Direct least-squares approach without forming mass matrix}
An alternative is to solve the weighted least-squares problem \eqref{eq:weighted_ls} directly,
instead of forming $A=H^\top W^\top W H$.
Define $\tilde H := W H$ and $\tilde y := W y$, then \eqref{eq:weighted_ls} becomes
\[
  \hat a=\arg\min_a \|\tilde H a-\tilde y\|_2^2.
\]
A numerically robust route is to compute an SVD (or QR) factorization of $\tilde H$.
With SVD $\tilde H=U\Sigma V^\top$, the minimum-norm solution is
\begin{equation}\label{eq:svd_pinv}
  \hat a = \tilde H^+ \tilde y = V\,\Sigma^+\,U^\top \tilde y,
\end{equation}
where $(\cdot)^+$ is the Moore–Penrose pseudoinverse, i.e. $\Sigma^+ = \mathrm{diag}(1/\sigma_i)$ on the support of $\sigma$ and $0$ elsewhere.
Compared with normal equations, this avoids squaring the condition number and often produces more accurate solutions when the Gram system becomes difficult to solve.

The price is that one typically needs access to the full tall matrix $\tilde H$ (size $N_q\times P$),
whose row dimension $N_q$ is proportional to the number of quadrature nodes.
Thus, direct least squares can be much more memory demanding than assembling $A$.
When $\kappa_2(WH)$ is large, solving the normal equations may suffer from amplified roundoff errors; in such regimes, a direct least-squares solver often yields noticeably improved accuracy, at the expense of storing or factorizing the tall matrix.

%%%%%%%%%%%%%%%%%%%%%%%%%%

\subsection{Quasi-uniform sampling of neuron parameters on \texorpdfstring{$\mathbb{S}^d$}{Sd}}
The neuron parameters $\theta_\ell=(w_\ell,b_\ell)\in\mathbb{S}^d$ should be well-spread on the sphere
to reduce clustering and improve conditioning. We use a two-stage strategy:
\begin{enumerate}
  \item \textbf{Gaussian initialization.}
  Sample $g_\ell\sim\mathcal{N}(0,I_{d+1})$ i.i.d. and set
  $\theta_\ell^{(0)} = g_\ell/\|g_\ell\|_2$, which is uniform on $\mathbb{S}^d$.
  \item \textbf{Energy refinement.}
  Starting from $\{\theta_\ell^{(0)}\}$, we minimize a repulsive Riesz energy
  \[
    E_s(\theta_1,\dots,\theta_n)
    :=
    \sum_{1\le i<j\le n}\frac{1}{\|\theta_i-\theta_j\|_2^{s}},
    \qquad s=d-1,
  \]
  via projected gradient steps, with projection $\theta_\ell\leftarrow \theta_\ell/\|\theta_\ell\|_2$
  after each update. We use $\texttt{jaxopt.LBFGS}$ to optimize the above energy in implementation. The resulting set is empirically quasi-uniform on $\mathbb{S}^d$.
  
\end{enumerate}

% =========================
\section{Numerical Experiments}\label{sec:numerical-experiments}
% =========================

All experiments are carried out in double precision (\texttt{float64}) for numerical stability.
All linear systems (both normal equations and direct least-squares formulations) are solved by
\texttt{scipy.linalg.lstsq} using the SVD-based driver \texttt{gelsd}. In our implementation, we discard neurons whose hyperplanes do not intersect the domain $\Omega$. 

% -----------------------------------------
\subsection{Divergence-free vector field \texorpdfstring{$L^2$}{L2} approximation}
% -----------------------------------------

\paragraph{Setup.}
We consider $\Omega=[-1,1]^d$ with $d\in\{2,3\}$ and test activation powers
\[
k\in\{1,2,3,4\}\qquad \text{(for both $d=2$ and $d=3$)}.
\]
For each $(d,k)$, we sample $n$ quasi-uniform parameters $\{\theta_\ell\}_{\ell=1}^n\subset\mathbb{S}^d$
and report errors vs.\ $n$.
We solve the $L^2(\Omega)$ projection:
\[
\mathbf{u}_n
=
\arg\min_{\mathbf{v}\in \VFNS}\|\mathbf{v}-\mathbf{u}^\dagger\|_{L^2(\Omega)}^2.
\]
As shown in~\cite{liu_stabilityshallowneural_2025}, the mass matrix becomes increasingly ill-conditioned as $d$ decreases for fixed $k$. Accordingly, we solve the weighted least-squares system directly in $d=2$ for numerical stability, while using the normal equations in $d=3$ for efficiency.

\paragraph{Training objective.}
We use explicit divergence-free targets with a tunable frequency parameter $\omega$
(default $\omega=\pi$ in our experiments).

\textbf{2D target (stream function).}
Let
\[
\psi(x,y) = \sin(\omega x)\sin(\omega y),
\qquad
\mathbf{u}^\dagger_{2D}=\nabla^\perp\psi
=
\begin{pmatrix}
\partial_y\psi\\
-\partial_x\psi
\end{pmatrix}.
\]
Equivalently,
\[
\mathbf{u}^\dagger_{2D}(x,y)
=
\omega
\begin{pmatrix}
\sin(\omega x)\cos(\omega y)\\
-\cos(\omega x)\sin(\omega y)
\end{pmatrix}.
\]

\textbf{3D target (curl of a vector potential).}
Define
\[
\mathbf{A}(x,y,z)=
\begin{pmatrix}
\sin(\omega y)\sin(\omega z)\\
\sin(\omega z)\sin(\omega x)\\
\sin(\omega x)\sin(\omega y)
\end{pmatrix},
\qquad
\mathbf{u}^\dagger_{3D}=\nabla\times\mathbf{A}.
\]
Writing components explicitly,
\[
\begin{aligned}
u^\dagger_1 &= \omega\,\sin(\omega x)\big(\cos(\omega y)-\cos(\omega z)\big),\\
u^\dagger_2 &= \omega\,\sin(\omega y)\big(\cos(\omega z)-\cos(\omega x)\big),\\
u^\dagger_3 &= \omega\,\sin(\omega z)\big(\cos(\omega x)-\cos(\omega y)\big).
\end{aligned}
\]

\paragraph{Evaluation metrics and empirical rates.}
We measure the $L^2(\Omega)$ error in both absolute and relative forms:
\[
    \mathrm{Err}_{L^2}^{\mathrm{abs}}
    = \|\mathbf{u}_n-\mathbf{u}^\dagger\|_{L^2(\Omega)},
    \qquad
    \mathrm{Err}_{L^2}^{\mathrm{rel}}
    = \frac{\|\mathbf{u}_n-\mathbf{u}^\dagger\|_{L^2(\Omega)}}{\|\mathbf{u}^\dagger\|_{L^2(\Omega)}}.
\]
Given an increasing sequence of neuron numbers $n_1<n_2<\cdots$, we estimate the empirical convergence rate between two successive sizes by the log--log slope
\[
\mathrm{rate}_i
=
-\frac{\log\!\bigl(\mathrm{Err}^{\mathrm{rel}}_i/\mathrm{Err}^{\mathrm{rel}}_{i-1}\bigr)}{\log\!\bigl(n_i/n_{i-1}\bigr)}.
\]
We report $\mathrm{Err}_{L^2}^{\mathrm{rel}}$ together with the corresponding empirical rates. By Theorem~\ref{thm:approximation-rates-of-div-free-FNS}, the proven upper bound predicts
\[
\mathrm{Err}_{L^2}^{\mathrm{rel}} = O\!\left(n^{-\frac12-\frac{2k-1}{2d}}\right).
\]
See the log--log plots in Figure~\ref{fig:l2_fits_d=23-comparek} and the empirical rate tables in Table~\ref{tab:l2_rates_d2_k1_k4} for $d=2$ and Table~\ref{tab:l2_rates_d3_k1_k4} for $d=3$.

\begin{figure}[htbp]
  \centering
  \includegraphics[width=0.49\linewidth]{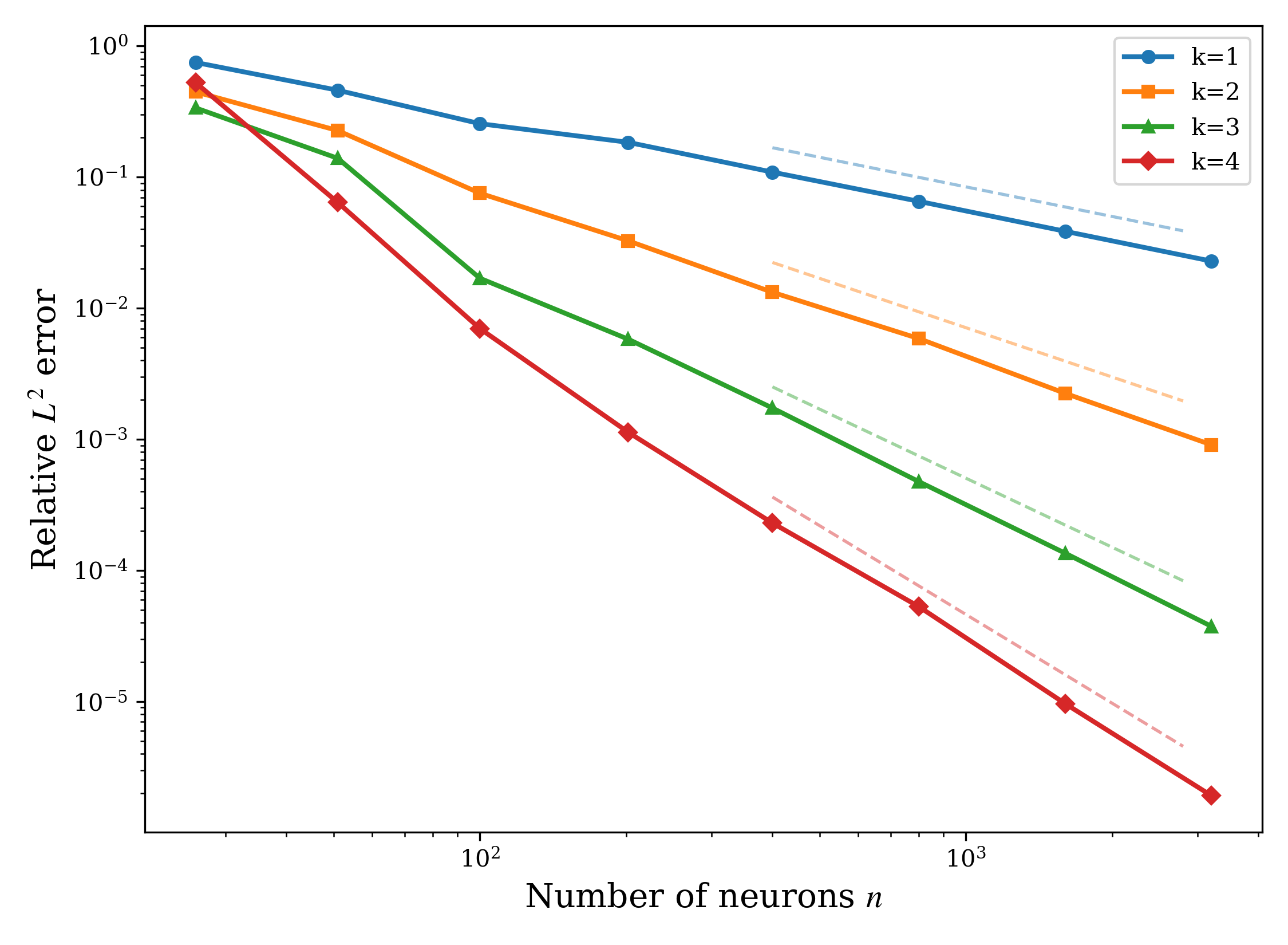}
  \includegraphics[width=0.49\linewidth]{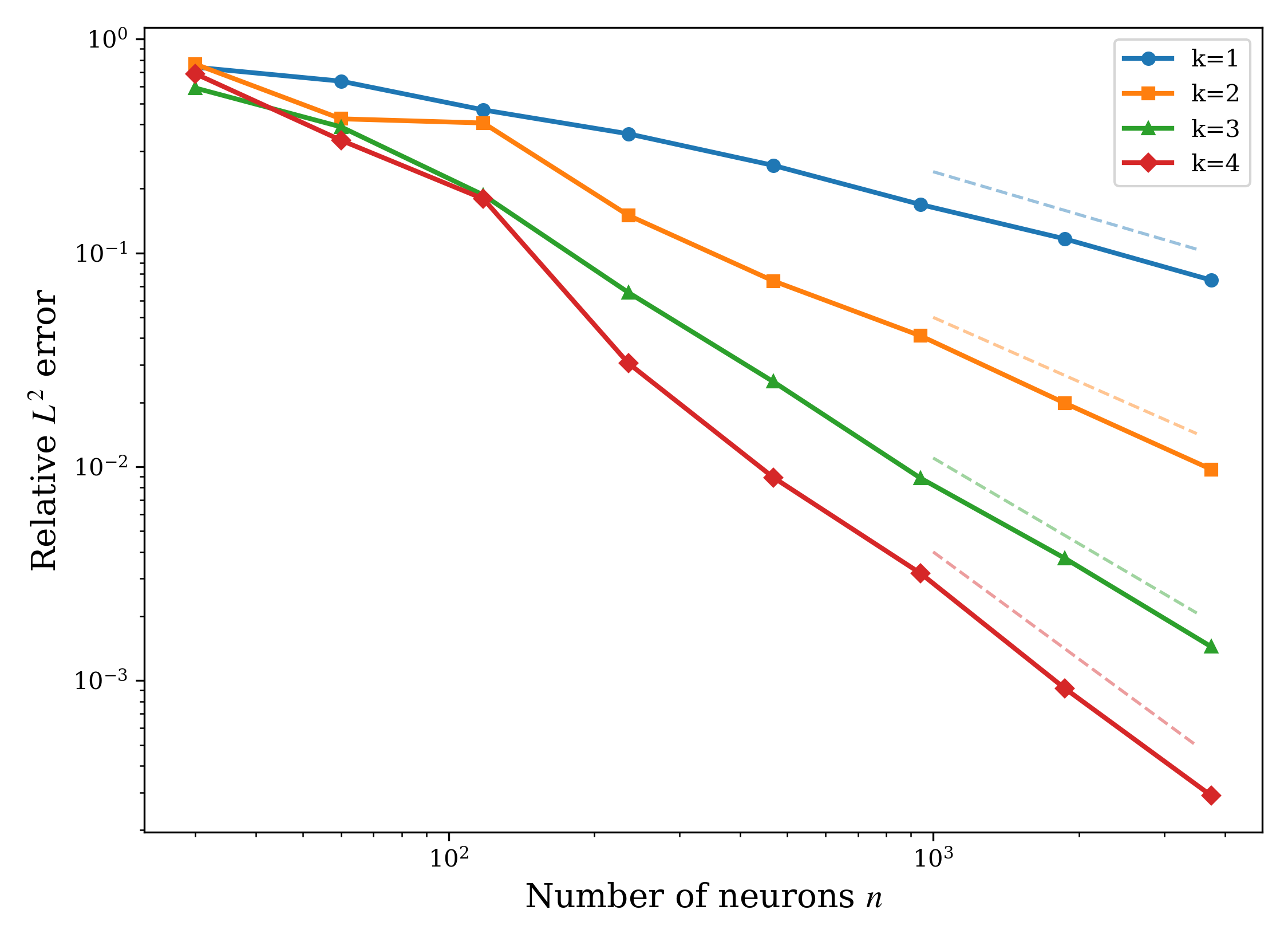}
  \caption{Divergence-free $L^2$ approximation: relative $L^2$ error decay with number of neurons.
  (Left) $d=2$. (Right) $d=3$. Each solid curve is accompanied by a dashed line of corresponding color, whose slope equals the theoretical upper bound on the convergence rate.}
  \label{fig:l2_fits_d=23-comparek}
\end{figure}

% -----------------------------------------
\subsection{Stokes equation with homogeneous boundary condition}\label{subsec:num-exp-homogeneous-stokes}
% -----------------------------------------\

\paragraph{Setup.}
We solve the steady Stokes equation with $\nu =1$ on $\Omega=[-1,1]^d$ ($d\in\{2,3\}$):
\[
-\nu\Delta \mathbf{u} + \nabla p = \mathbf{f},\qquad \nabla\cdot \mathbf{u}=0 \ \text{in }\Omega,\qquad
\mathbf{u}=\mathbf{0}\ \text{on }\partial\Omega.
\]
We discretize with the divergence-free finite neuron space $\VFNS$,
and test activation powers
\[
k\in\{2,3,4,5\}\qquad \text{(for both $d=2$ and $d=3$)}.
\]
We enforce the homogeneous boundary condition via a penalty parameter $\varepsilon>0$ and minimize the
penalized Stokes energy over $\VFNS$:
\[
\mathbf{u}_n=\arg\min_{\mathbf{v}\in V_n} J_\varepsilon(\mathbf{v})
=
\arg\min_{\mathbf{v}\in V_n}
\left\{
\frac{\nu}{2}\int_\Omega |\nabla\mathbf{v}|^2\,dx
-\int_\Omega \mathbf{f}\cdot\mathbf{v}\,dx
+\frac{1}{2\varepsilon}\int_{\partial\Omega}|\mathbf{v}|^2\,ds
\right\}.
\]
We also solve the weighted least-squares system directly in $d=2$ and using the normal equations in $d=3$. We impose the homogeneous Dirichlet condition via a boundary penalty
$\frac{1}{2\varepsilon}\int_{\partial\Omega}|\mathbf{u}_M|^2\,ds$ with parameter $\lambda_{\partial \Omega} = 1/\varepsilon$.
Unless otherwise stated, we set $\varepsilon=1$ for $d=2$ and $\varepsilon=1/6$ for $d=3$.
We do not tune $\varepsilon$. A sensitivity study in Section~\ref{subsubsec:boundary-penalty-sens}
shows that the convergence rates are insensitive to this choice.

\paragraph{Manufactured solutions and forcing.}
To generate the forcing term, we prescribe a divergence-free ground truth velocity $\mathbf{u}^\dagger$
satisfying $\mathbf{u}^\dagger|_{\partial\Omega}=\mathbf{0}$ and set
\[
\mathbf{f} := -\nu\Delta \mathbf{u}^\dagger.
\]
In implementation, we use the numerically preferred weak-form identity
\[
\int_\Omega \mathbf{f}\cdot\boldsymbol{\phi}\,dx
=
\nu\int_\Omega \nabla\mathbf{u}^\dagger : \nabla\boldsymbol{\phi}\,dx,
\quad \forall \boldsymbol{\phi}\in \VFNS,
\]
which avoids explicitly forming $\Delta \mathbf{u}^\dagger$.

To ensure zero Dirichlet boundary conditions, we use a bubble function
\[
\beta(t)=(1-t^2)^2,\qquad t\in[-1,1],
\]
and denote $\beta(x,y)=\beta(x)\beta(y)$, $\beta(x,y,z)=\beta(x)\beta(y)\beta(z)$.
We use a frequency parameter $\omega$ (default $\omega=\pi$).

\textbf{2D Stokes target.}
Let
\[
\psi(x,y)=\beta(x)\beta(y)\,\sin(\omega x)\sin(\omega y),
\qquad
\mathbf{u}^\dagger_{2D}=\nabla^\perp\psi
=
\begin{pmatrix}
\partial_y\psi\\
-\partial_x\psi
\end{pmatrix}.
\]
By construction $\nabla\cdot \mathbf{u}^\dagger_{2D}=0$ and $\mathbf{u}^\dagger_{2D}|_{\partial\Omega}=0$.

\textbf{3D Stokes target.}
Define the antisymmetric potential matrix $\mu(x)\in\mathbb{R}^{3\times 3}$ with entries
\[
\mu(x,y,z)
=
\beta(x,y,z)
\begin{pmatrix}
0 & \sin(\omega x)\sin(\omega y) & -\sin(\omega x)\sin(\omega z)\\
-\sin(\omega x)\sin(\omega y) & 0 & \sin(\omega y)\sin(\omega z)\\
\sin(\omega x)\sin(\omega z) & -\sin(\omega y)\sin(\omega z) & 0
\end{pmatrix}.
\]
We then define the divergence-free velocity field by the antisymmetric-divergence operator
\[
(\mathbf{u}^\dagger_{3D})_i = \sum_{j=1}^3 \partial_j \mu_{ij}.
\]
Equivalently, writing
\[
T_x = \partial_x\!\big(\beta(x,y,z)\sin(\omega x)\big),\quad
T_y = \partial_y\!\big(\beta(x,y,z)\sin(\omega y)\big),\quad
T_z = \partial_z\!\big(\beta(x,y,z)\sin(\omega z)\big),
\]
we obtain the explicit symmetric form
\[
\mathbf{u}^\dagger_{3D}
=
\begin{pmatrix}
\sin(\omega x)\,(T_y-T_z)\\
\sin(\omega y)\,(T_z-T_x)\\
\sin(\omega z)\,(T_x-T_y)
\end{pmatrix}.
\]
This construction ensures $\nabla\cdot \mathbf{u}^\dagger_{3D}=0$ and $\mathbf{u}^\dagger_{3D}|_{\partial\Omega}=0$.

\paragraph{Evaluation metrics and empirical rates.}
We report the relative errors in the $L^2(\Omega)$ norm and the $\dot H^1(\Omega)$ seminorm:
\[
\mathrm{Err}_{\dot{H}^1}^{\mathrm{rel}}
    = \frac{\|\nabla(\mathbf{u}_n-\mathbf{u}^\dagger)\|_{L^2(\Omega)}}{\|\nabla\mathbf{u}^\dagger\|_{L^2(\Omega)}},
\qquad
\mathrm{Err}_{L^2}^{\mathrm{rel}}
    = \frac{\|\mathbf{u}_n-\mathbf{u}^\dagger\|_{L^2(\Omega)}}{\|\mathbf{u}^\dagger\|_{L^2(\Omega)}}.
\]
When sweeping $n$, we estimate empirical convergence rates for each metric by the corresponding log--log slope.
By Theorem~\ref{thm:approximation-rates-of-div-free-FNS}, the proven upper bounds on the relative errors scale as
\[
\mathrm{Err}_{L^2}^{\mathrm{rel}} = O\!\left(n^{-\frac12-\frac{2k-1}{2d}}\right),
\qquad
\mathrm{Err}_{\dot H^1}^{\mathrm{rel}} = O\!\left(n^{-\frac12-\frac{2k-3}{2d}}\right).
\]
See the log--log plots in Figure~\ref{fig:stokes_solve_d=23-comparek} and the empirical rate tables in~\Cref{tab:stokes_h1_rates_d2_k2_k5,tab:stokes_l2_rates_d2_k2_k5} for $d=2$ and ~\Cref{tab:stokes_h1_rates_d3_k2_k5,tab:stokes_l2_rates_d3_k2_k5} for $d=3$.

\begin{figure}[htbp]
  \centering
  \includegraphics[width=0.49\linewidth]{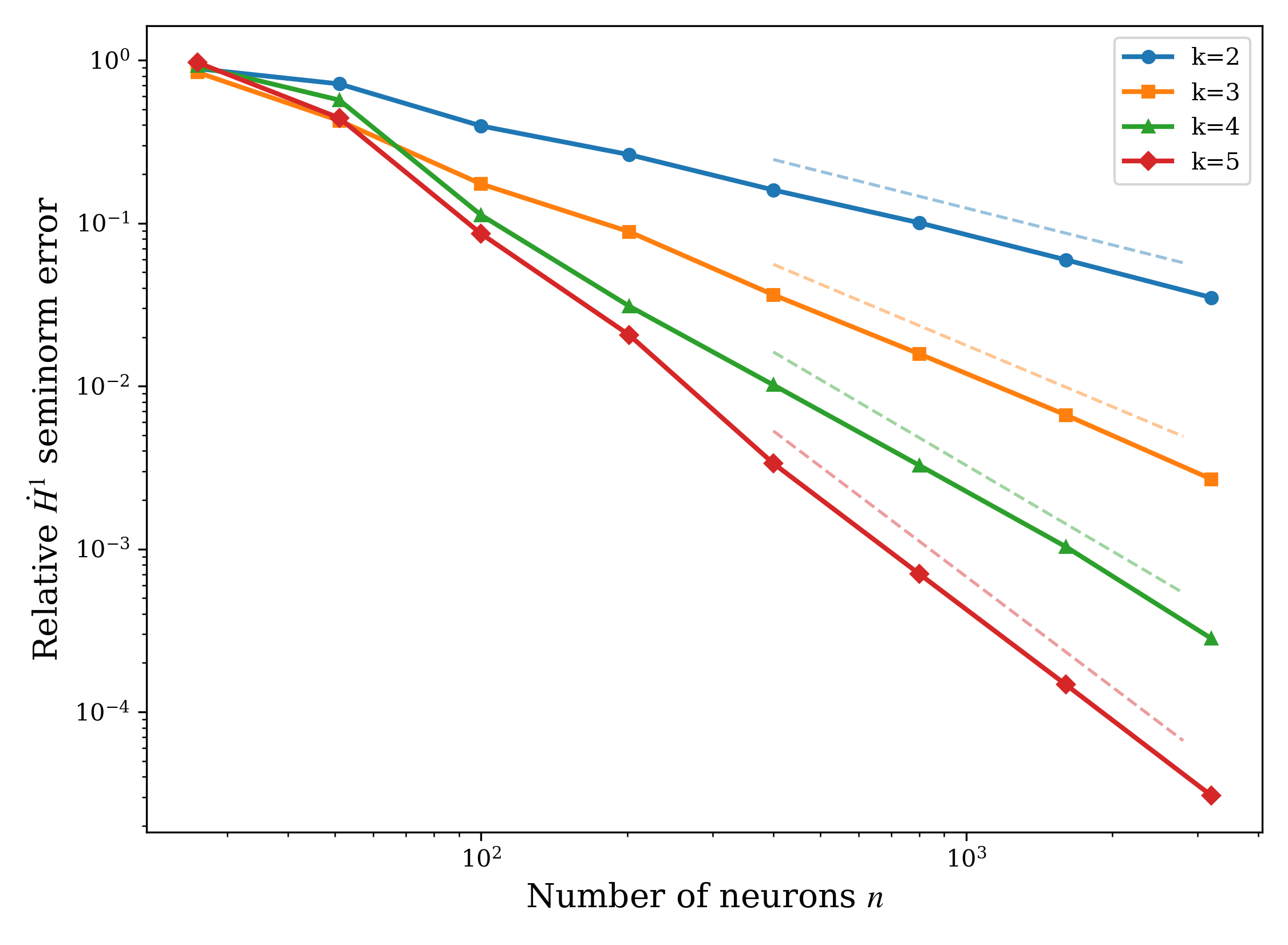}
  \includegraphics[width=0.49\linewidth]{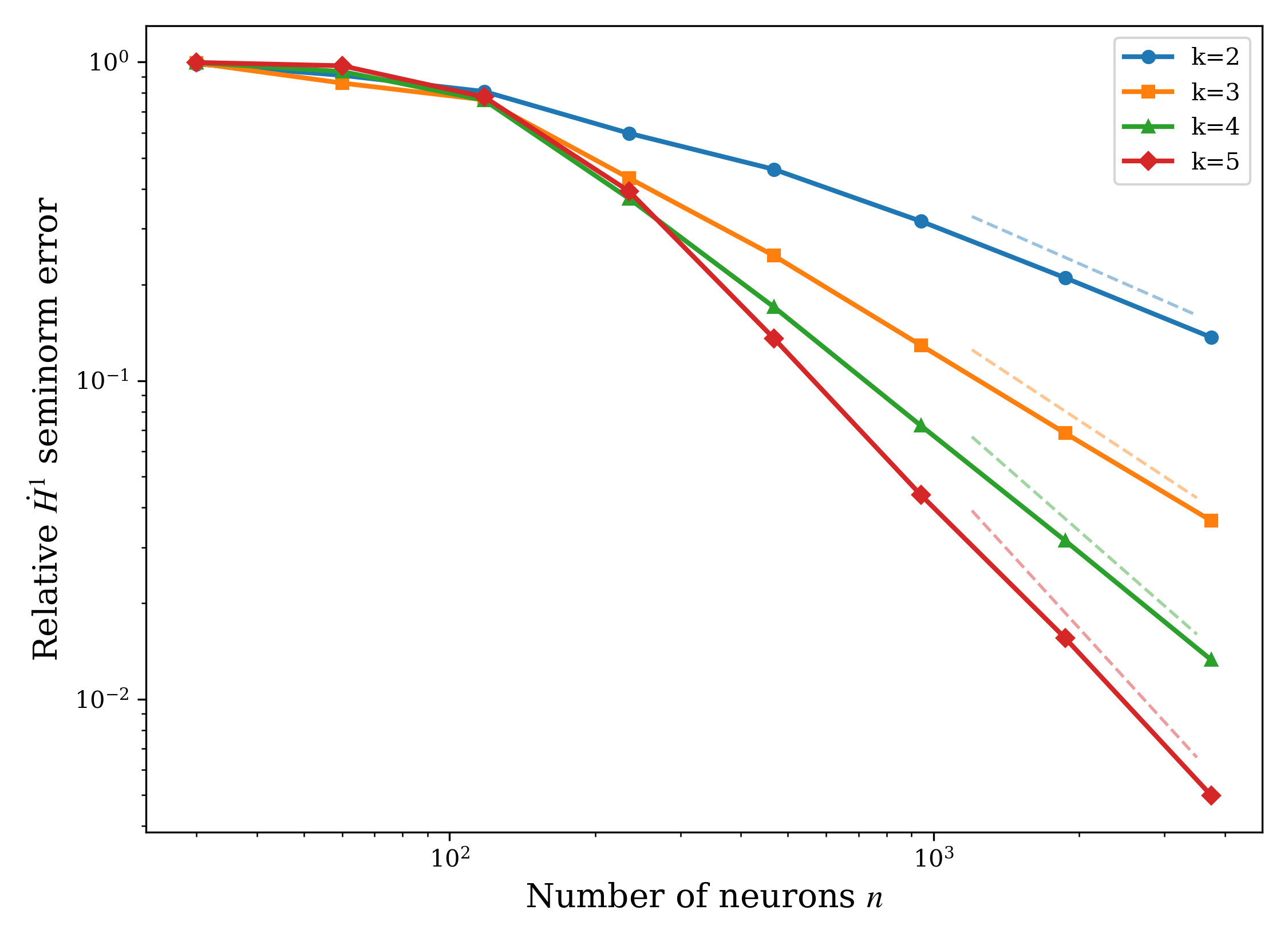}
  \caption{Solving Stokes equation in div-free FNS: relative $\dot{H}^1$ seminorm error decay with number of neurons.
  (Left) $d=2$. (Right) $d=3$. Dashed lines in matching colors show the theoretical upper bound slopes for reference.}
  \label{fig:stokes_solve_d=23-comparek}
\end{figure}

\subsection{Lid-driven cavity flow: a non-manufactured Stokes benchmark}

To demonstrate that the divergence-free FNS discretization is not limited to manufactured solutions with known closed-form targets, we consider the classical 2D lid-driven cavity flow for the steady Stokes system on the square domain $\Omega=[-1,1]^2$. This benchmark is driven entirely by non-homogeneous boundary data and therefore provides a more realistic test of whether the method remains effective beyond synthetic examples.

We solve
\[
-\nu \Delta u + \nabla p = 0 \quad \text{in }\Omega,
\qquad
\nabla\cdot u = 0 \quad \text{in }\Omega,
\]
with no-slip boundary conditions on the cavity walls except for a prescribed horizontal lid velocity on the top boundary. More precisely,
\[
u = 0 \quad \text{on } \partial\Omega\setminus\Gamma_{\mathrm{top}},
\qquad
u(x,1) = (g(x),\,0) \quad \text{on }\Gamma_{\mathrm{top}}:=\{(x,1):x\in[-1,1]\}.
\]
We test two representative lid profiles:
\[
g_{\mathrm{const}}(x)=1,
\qquad
g_{\mathrm{smooth}}(x)=\sin^2\!\Bigl(\frac{\pi}{2}(x+1)\Bigr).
\]
The first is the classical lid-driven cavity boundary condition; however, it is discontinuous at the top corners when matched with the no-slip side walls, and is therefore expected to induce reduced regularity near the boundary. The second vanishes smoothly at the two top corners and is used as a regularized lid profile to alleviate this singular behavior.

\paragraph{Divergence-free FNS discretization.}
We use the same divergence-free trial space $V_n^k\subset H^1_{\mathrm{div}}(\Omega)$ as in the previous Stokes experiments and impose the non-homogeneous Dirichlet data weakly through a boundary penalty. Let $u_{\mathrm{bc}}$ denote the prescribed boundary velocity, namely zero on $\partial\Omega\setminus\Gamma_{\mathrm{top}}$ and $(g(x),0)$ on $\Gamma_{\mathrm{top}}$. We then minimize over $V_n^k$
\[
u_n = \arg\min_{v\in V_n^k}
\left\{
\frac{\nu}{2}\int_{\Omega}|\nabla v|^2\,dx
+\frac{1}{2\varepsilon}\int_{\partial\Omega}|v-u_{\mathrm{bc}}|^2\,ds
\right\},
\]
which yields a convex quadratic problem in the outer coefficients. As in the previous experiments, the incompressibility constraint is built into the trial space and is therefore satisfied exactly at the discrete level.

\paragraph{FEM reference solution.}
As no analytic solution is available for this benchmark, we compute a high-resolution finite element reference solution using \texttt{dolfinx}. On $\Omega=[-1,1]^2$, we generate a uniform triangular mesh of size $512\times512$ and use the Taylor--Hood pair $(P_2,P_1)$, namely continuous piecewise quadratic velocity and continuous piecewise linear pressure. The steady Stokes system is solved with zero body force using the standard mixed weak formulation
\[
a((u,p),(v,q))
=
\nu(\nabla u,\nabla v) - (p,\nabla\cdot v) - (\nabla\cdot u,q),
\qquad
L(v)=0,
\]
together with Dirichlet boundary conditions given by the lid velocity on the top edge and no-slip on the remaining walls. The resulting saddle-point linear system is solved by a direct LU factorization through PETSc with MUMPS.

\paragraph{Evaluation.}
We report errors in the $L^2(\Omega)$ norm and the $\dot H^1(\Omega)$ seminorm with respect to the FEM reference solution $\mathbf{u}^\dagger$, together with the empirical convergence rates as the number of neurons increases.
\[
\mathrm{Err}_{L^2}
    = \|\mathbf{u}_n-\mathbf{u}^\dagger\|_{L^2(\Omega)},
    \qquad
\mathrm{Err}_{\dot{H}^1}
    = \|\nabla(\mathbf{u}_n-\mathbf{u}^\dagger)\|_{L^2(\Omega)}.
\]
For the classical constant-lid case, we additionally evaluate the error on a trimmed interior subdomain $\Omega'=[-0.9,0.9]^2$ to get $\mathrm{Err}_{L^2}^{\mathrm{inner}}$ and $\mathrm{Err}_{\dot{H}^1}^{\mathrm{inner}}$, in order to separate the global approximation quality from the loss of regularity near the boundary and, in particular, near the two top corners.

\subsubsection{Classical lid-driven cavity flow}
% We first consider the standard choice $g_{\mathrm{const}}(x)=1$. In this case, the horizontal lid velocity is incompatible with the no-slip side walls at the two top corners, so the solution is expected to exhibit reduced regularity there. This behavior is clearly reflected in the numerical results. The predicted flow pattern is qualitatively correct: the streamline and velocity plots recover the main cavity structure and are visually close to the FEM reference. However, the global convergence rates are noticeably degraded, especially in the $H^1$ norm. 
We first consider the standard choice $g_{\mathrm{const}}(x)=1$. In this case, the horizontal lid velocity is incompatible with the no-slip side walls at the two top corners, so reduced regularity near the boundary is expected. From the viewpoint of this work, this example is a diagnostic result of independent interest. It shows that the proposed divergence-free FNS discretization is applicable to a standard non-manufactured benchmark, while also revealing its sensitivity to the regularity of the underlying boundary-driven Stokes solution, in a manner consistent with theoretical intuition.
\begin{figure}[htbp]
  \centering
  \includegraphics[width=0.9\textwidth]{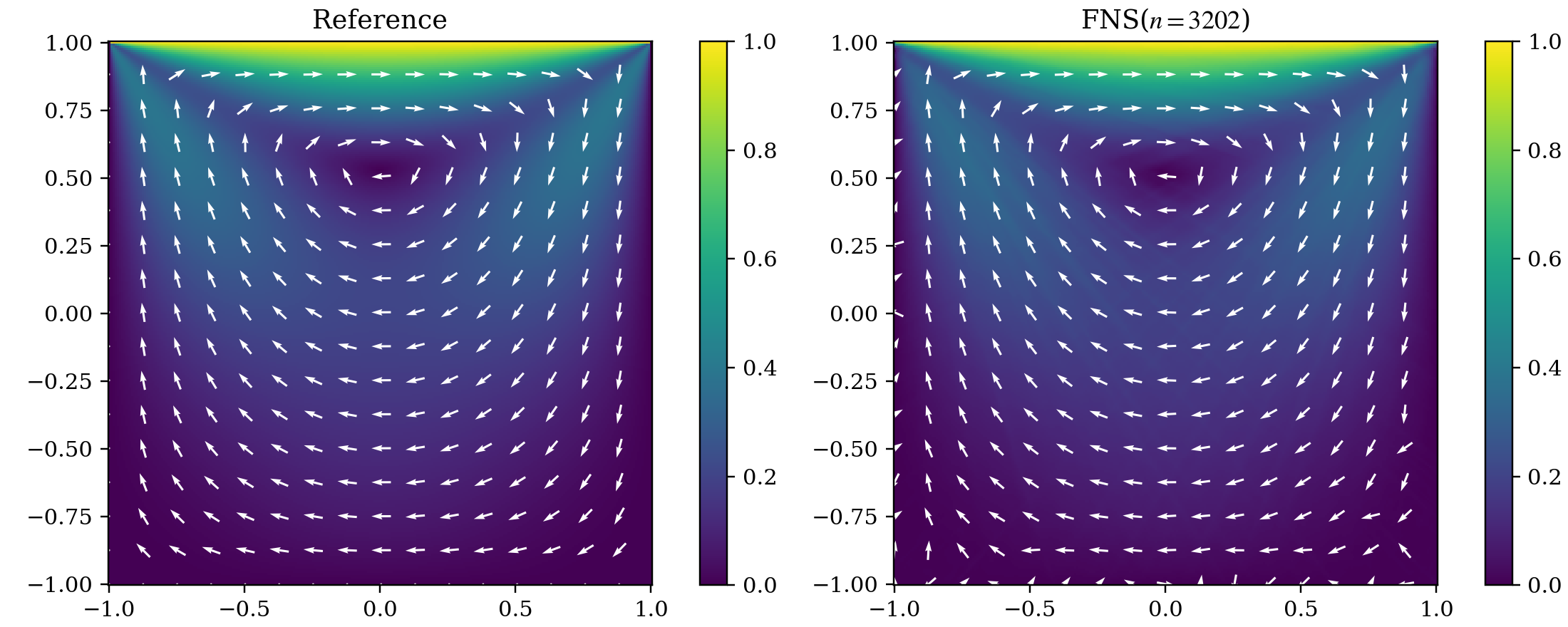}
  \caption{Solving classical lid-driven cavity flow by div-free FNS, $k=2, n=3202$.}
  \label{fig:classic-lid-flow}
\end{figure}

This behavior is clearly reflected in the numerical results. The predicted flow field remains qualitatively correct: the main cavity structure is captured and the streamline and velocity plots are visually close to the FEM reference. However, the global convergence rates are substantially degraded, especially in the $\dot H^1$ norm. See Table~\ref{tab:classic_lid_k2}, for $k=2$, the relative $L^2$ error still decreases at an empirical rate around $0.6$, whereas the global $\dot H^1$ rate remains below $0.1$. When the error is evaluated instead on an interior subdomain $\Omega'=[-0.9,0.9]^2$, the observed $\dot H^1$ rate improves to about $0.3$. This marked improvement after interior trimming indicates that the dominant difficulty is localized near the boundary, and in particular near the two top corners, rather than in the bulk flow.
\begin{table}[htbp]
\centering
\caption{Solving classical lid-driven cavity flow by divergence-free FNS: errors and empirical rates for $k=2$.}
\label{tab:classic_lid_k2}
\small
\setlength{\tabcolsep}{3pt}

\begin{minipage}[t]{0.48\textwidth}
\centering
\textbf{(a) $L^2$ errors}

\vspace{0.3em}
\begin{tabular}{c cc cc}
\hline
& \multicolumn{2}{c}{global}
& \multicolumn{2}{c}{inner} \\
\cline{2-3}\cline{4-5}
$n$
& $\mathrm{Err}_{L^2}$ & --
& $\mathrm{Err}_{L^2}^{\mathrm{inner}}$ & -- \\
\hline
51   & 2.840e-01 & --    & 2.654e-01 & --    \\
100  & 2.153e-01 & 0.41 & 1.935e-01 & 0.47 \\
202  & 1.384e-01 & 0.62 & 1.227e-01 & 0.65 \\
400  & 7.691e-02 & 0.85 & 6.158e-02 & 1.01 \\
801  & 7.344e-02 & 0.07 & 5.909e-02 & 0.06 \\
1604 & 5.077e-02 & 0.53 & 3.991e-02 & 0.57 \\
3202 & 4.084e-02 & 0.31 & 3.122e-02 & 0.35 \\
\hline
\end{tabular}
\end{minipage}
\hfill
\begin{minipage}[t]{0.48\textwidth}
\centering
\textbf{(b) $\dot H^1$ errors}

\vspace{0.3em}
\begin{tabular}{c cc cc}
\hline
& \multicolumn{2}{c}{global}
& \multicolumn{2}{c}{inner} \\
\cline{2-3}\cline{4-5}
$n$
& $\mathrm{Err}_{\dot H^1}$ & --
& $\mathrm{Err}_{\dot H^1}^{\mathrm{inner}}$ & -- \\
\hline
51   & 5.343e+00 & --     & 2.256e+00 & --     \\
100  & 5.533e+00 & -0.05 & 2.626e+00 & -0.23 \\
202  & 5.114e+00 & 0.11  & 2.072e+00 & 0.34  \\
400  & 4.698e+00 & 0.12  & 1.681e+00 & 0.31  \\
801  & 4.684e+00 & 0.00  & 1.581e+00 & 0.09  \\
1604 & 4.351e+00 & 0.11  & 1.290e+00 & 0.29  \\
3202 & 4.032e+00 & 0.11  & 9.483e-01 & 0.44  \\
\hline
\end{tabular}
\end{minipage}

\end{table}

\subsubsection{Regularized lid-driven cavity flow}\label{subsubsec:regularized-lid}
We next replace the lid with the regularized profile
\[
g_{\mathrm{smooth}}(x)=\sin^2\!\Bigl(\frac{\pi}{2}(x+1)\Bigr),
\]
which vanishes smoothly at the two top corners and is therefore more compatible with the no-slip boundary condition on the side walls. In this case, the convergence behavior improves substantially, see \Cref{tab:reg_lid_k2_k3}. 
These results are consistent with the interpretation that the poor behavior in the classical cavity is not caused by the divergence-free FNS discretization itself, but rather by the low regularity of the induced solution. Once the lid profile is smoothed near the corners, the method recovers clear high-order convergence trends.
\begin{figure}[htbp]
  \centering
  \includegraphics[width=0.9\textwidth]{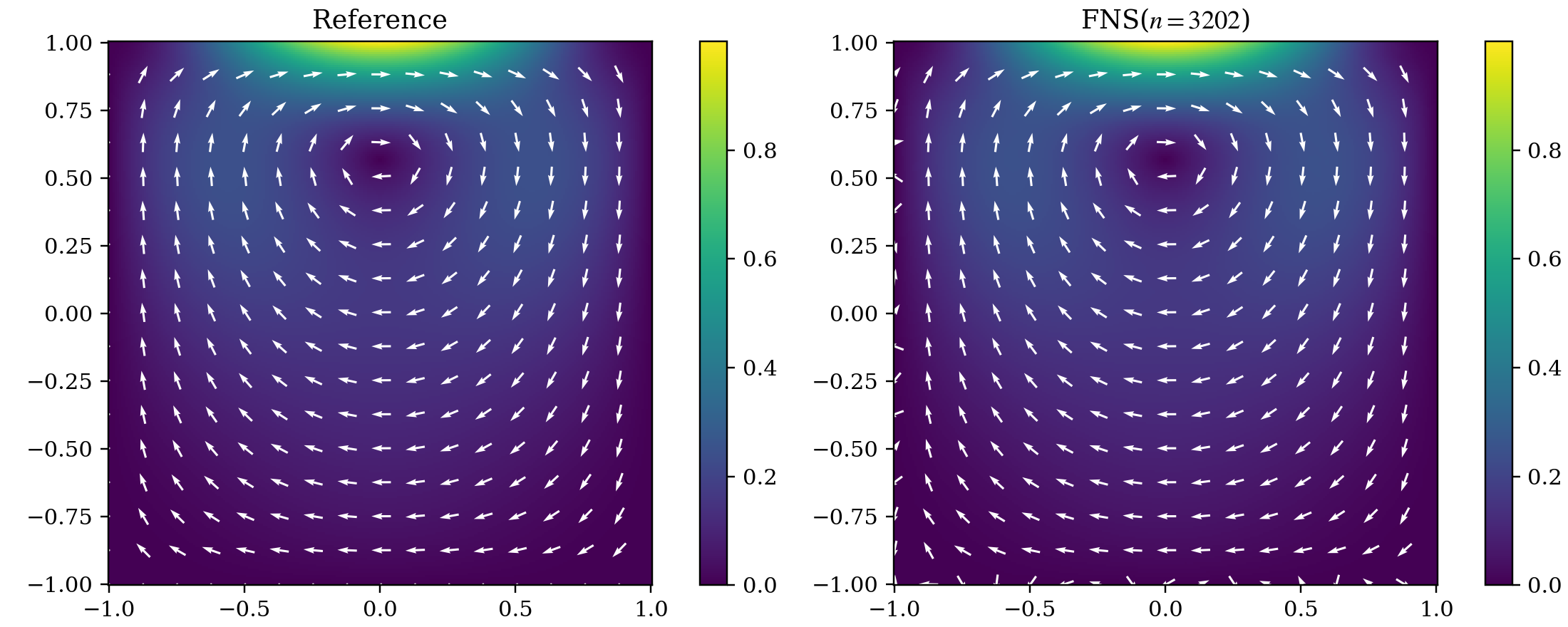}
  \caption{Solving regularized lid-driven cavity flow by divergence-free FNS, $k=2, n=3202$.}
  \label{fig:regularized-lid-flow}
\end{figure}

We compare the Taylor--Hood $P_2/P_1$ finite element baseline and the divergence-free FNS approximation at matched degrees of freedom (DOF). See Figure~\ref{fig:classic-lid-fem&fns-compare} for the classic lid case and Figure~\ref{fig:regularized-lid-fem&fns-compare} for the regularized lid case. Here, DOF refers to the dimension of the resulting linear system, namely the number of unknown linear coefficients in the discretization. For the smooth cavity benchmark considered here, and within the tested DOF range, the divergence-free FNS approximation yields slightly smaller errors than the Taylor--Hood $P_2/P_1$ baseline.

\begin{figure}[htbp]
  \centering
  \includegraphics[width=0.49\textwidth]{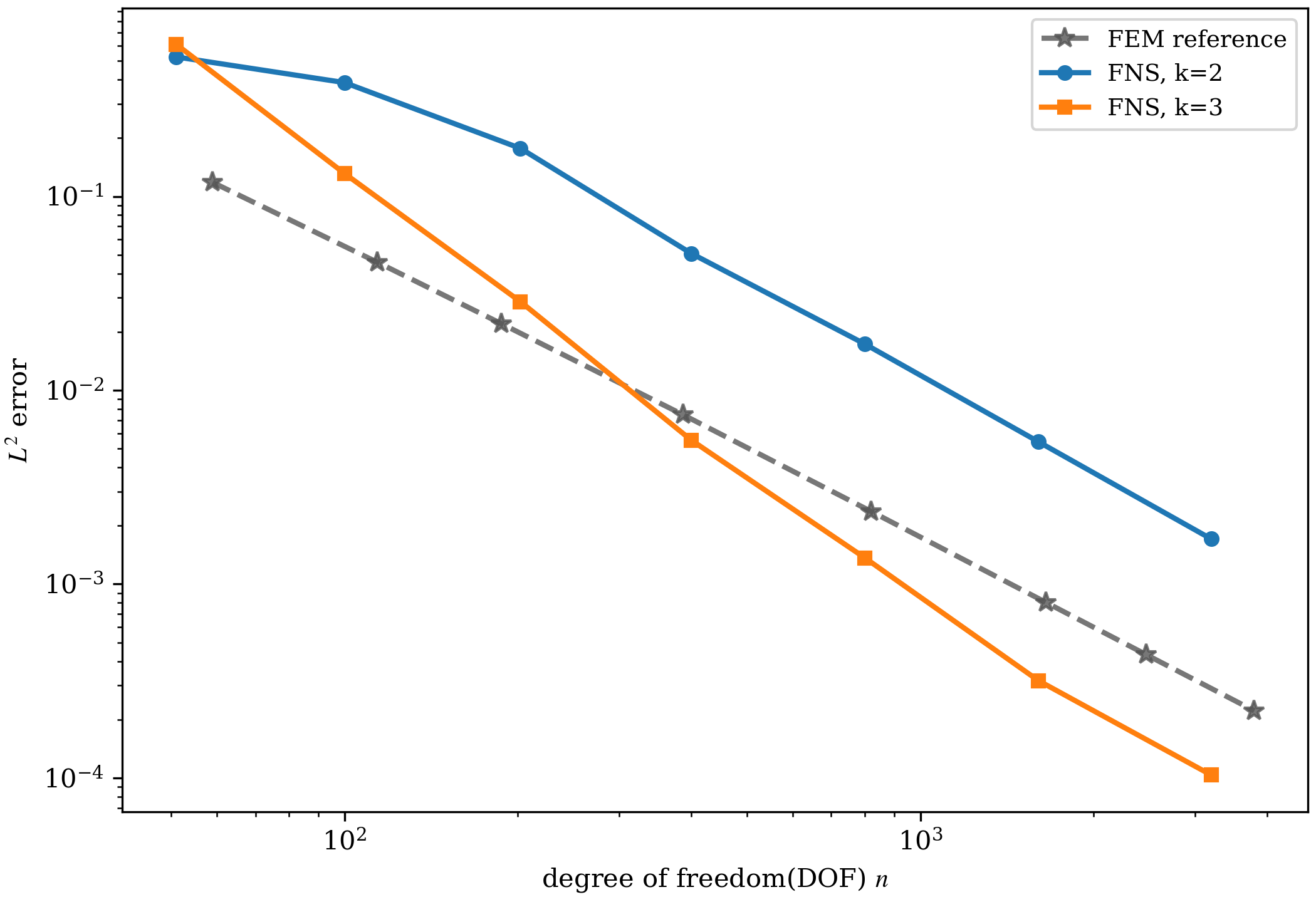}
  \includegraphics[width=0.49\textwidth]{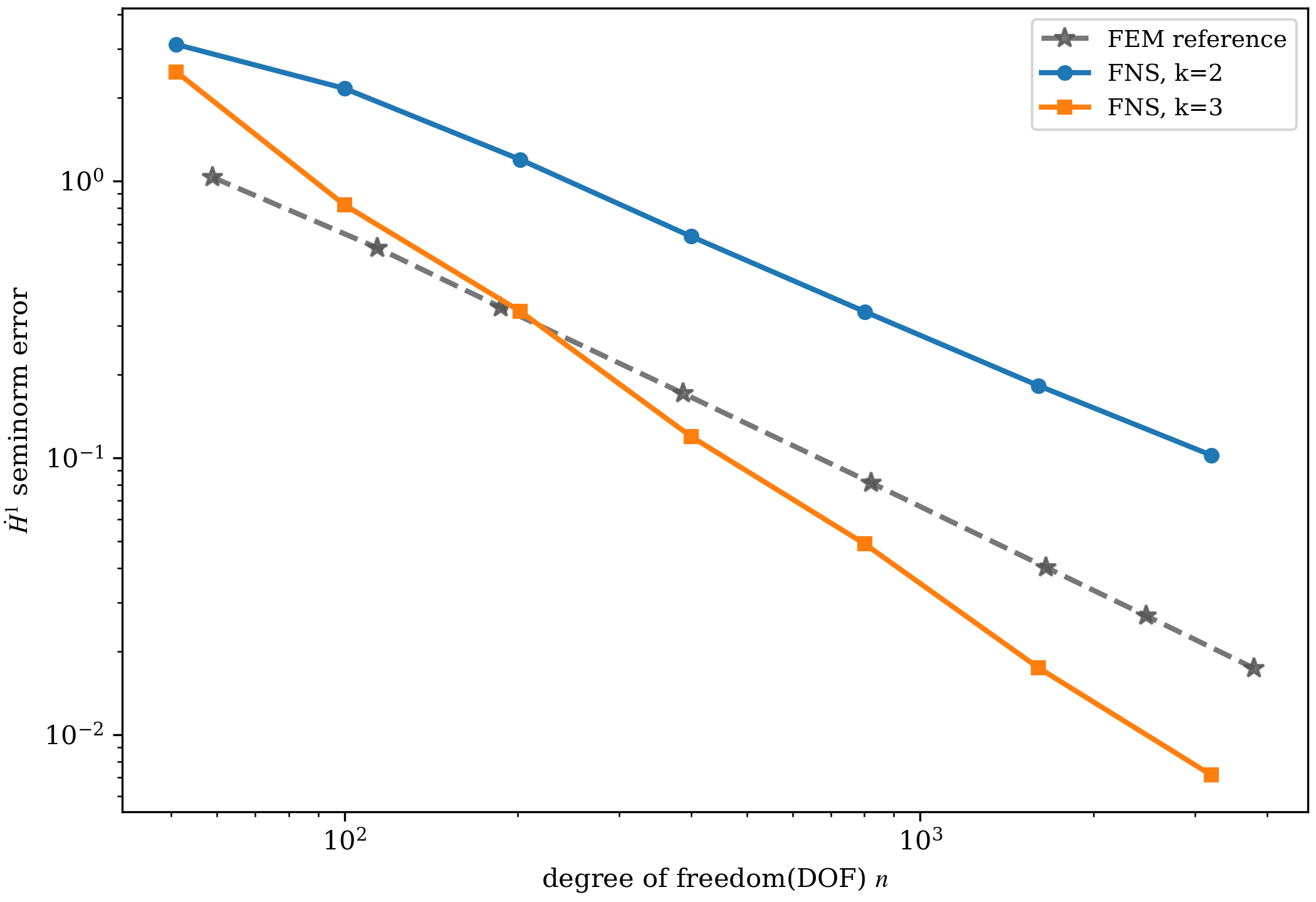}
  \caption{Solving regularized lid-driven cavity flow by divergence-free FNS compared with finite element method: $L^2$ error and $\dot{H}^1$ seminorm error decay with degree of freedom.
  Left: $L^2$ error. Right: $\dot{H}^1$ seminorm error.}
  \label{fig:regularized-lid-fem&fns-compare}
\end{figure}

Overall, the lid-driven cavity experiments complement the manufactured-solution tests from the previous subsection. They show that the proposed method remains applicable on a standard non-manufactured Stokes benchmark, while also making clear that its practical convergence behavior is sensitive to the regularity of the boundary data.

\subsection{Additional Experiments}

\subsubsection{Qualitative visualization of 2D cases}
\label{subsec:vis-2d-flow}

To complement the quantitative $L^2$ errors and empirical rates, we visualize the learned divergence-free velocity fields on the square domain $\Omega=[-1,1]^2$.
Given a trained model with $n$ neurons producing a vector field $\mathbf{u}_n:\Omega\to\mathbb{R}^2$, we evaluate it on a uniform grid and display both its direction and magnitude.
Specifically, we render a background heatmap of the speed $|\mathbf{u}_n(x)|$ and overlay a dense quiver plot to indicate the flow direction. The arrow length is normalized so that the heatmap solely encodes the magnitude.
All panels sharing the same physical quantity use a shared color scale to enable direct visual comparison across different numbers of neurons $n$.

The figure is arranged to compare (i) the ground-truth field $\mathbf{u}^\dagger$ and (ii) three learned approximations with increasing model size, illustrating progressively improved agreement in both global structure and local details.
To localize the remaining discrepancy, we additionally show the pointwise error field
\[
    e(x) := \|\mathbf{u}_n(x)-\mathbf{u}^\dagger(x)\|_2,
\]
as a heatmap with its own color scale.
We show the divergence-free $L^2$ approximation when $d=2, k=2$ in Figure.~\ref{fig:divfree2d-flow} and stokes equation when $d=2, k=2$ in Figure.~\ref{fig:stokes2d-flow}.

\subsubsection{Comparison between Normal equations and  direct least squares}\label{subsubsec:mass_ablation}
We compare the normal-equation solver and the direct least-squares solver on the 2D divergence-free $L^2$ approximation with $k=4$. The $2$-norm condition number $\kappa_2$ is computed via singular value decomposition ($\texttt{scipy.linalg.svd}$) rather than derived from the least-squares solver ($\texttt{scipy.linalg.lstsq}$), as the latter tends to truncate small singular values and may underestimate the ill-conditioning. 

As shown in Figure~\ref{fig:mass-ablation} (right), the condition number of the normal equation system grows rapidly,
consistent with the squaring effect $\kappa_2(H^\top W^\top W H)\approx \kappa_2(WH)^2$.
Correspondingly, Figure~\ref{fig:mass-ablation} (left) shows that the normal-equation approach exhibits error saturation at large $n$, while the direct LS solver continues to decrease the loss and remains consistent with the predicted rate in the tested regime.
This indicates that, beyond a moderate width, the observed accuracy of the mass-matrix approach is limited by numerical stability rather than approximation power.
\begin{figure}
    \centering
    \includegraphics[width=0.48\linewidth]{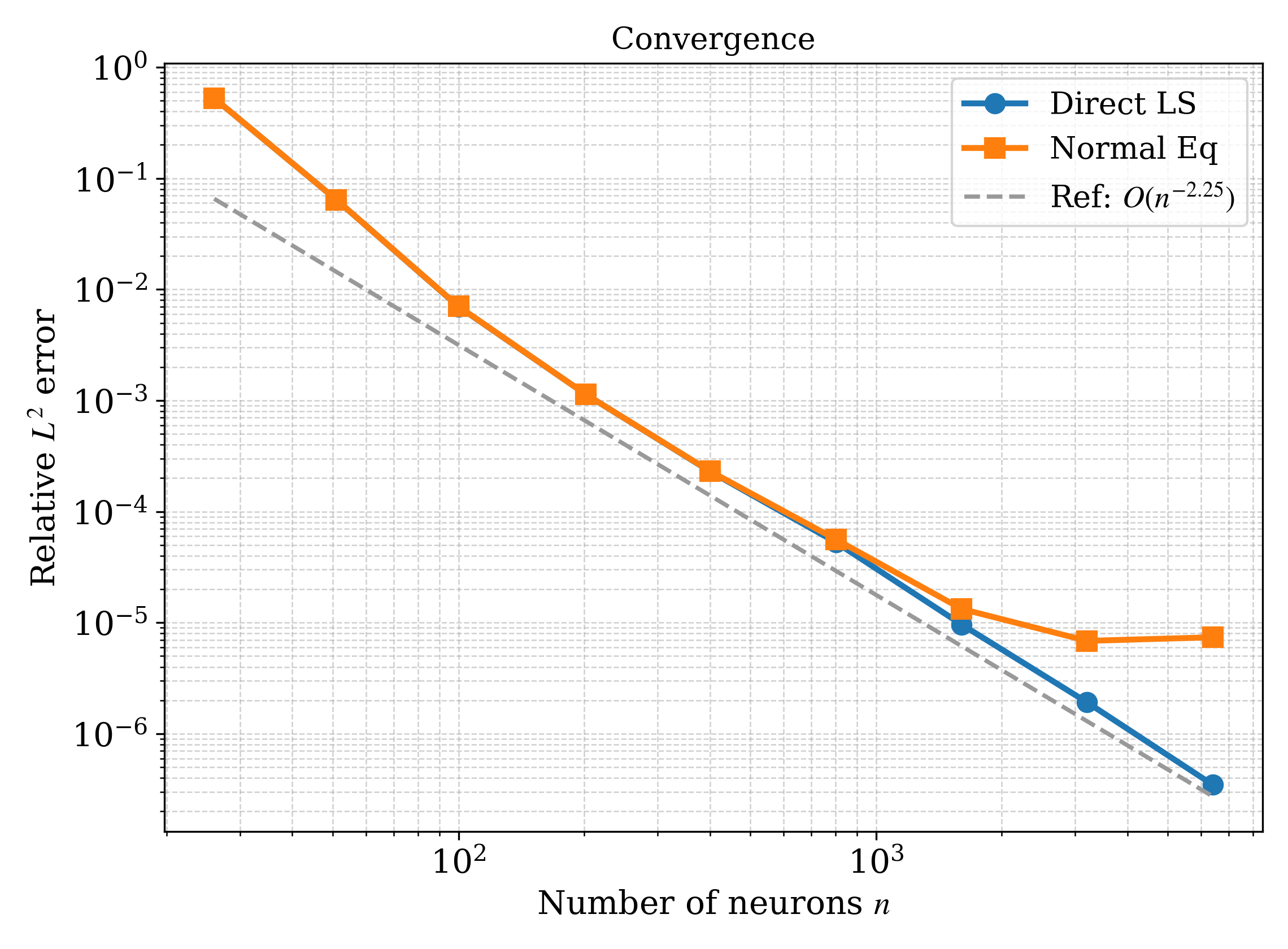}
    \includegraphics[width=0.48\linewidth]{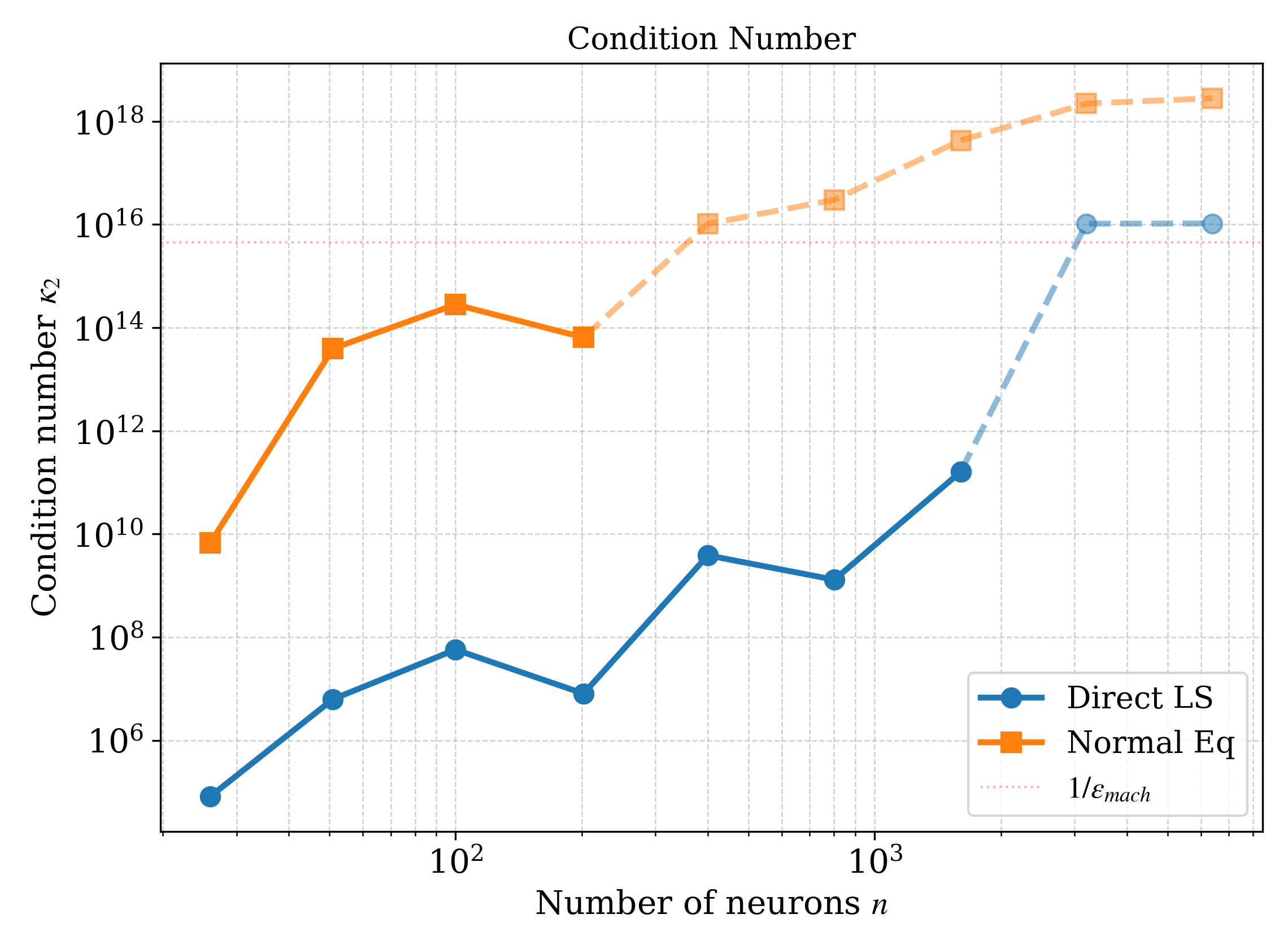}
    \caption{$d=2$, $k=4$ divergence-free $L^2$ approximation: error decay (left) and condition number (right), comparing normal equations (mass matrix) vs.\ direct least squares.}
    \label{fig:mass-ablation}
\end{figure}

\subsubsection{Sensitivity to the boundary penalty parameter}\label{subsubsec:boundary-penalty-sens}
As discussed in Section~\ref{subsec:num-exp-homogeneous-stokes}, we enforce the homogeneous Dirichlet condition weakly via the penalty term
$\frac{1}{2\varepsilon}\int_{\partial\Omega}|\mathbf{u}_M|^2\,ds$.
In all main experiments we fix $\varepsilon=1$ for $d=2$ and $\varepsilon=1/6$ for $d=3$.

To assess sensitivity, we rerun the Stokes experiments at $k=4$ with multiple $\varepsilon$ values.
For Stokes2D we test $\varepsilon\in\{0.01,\,100\}$ (in addition to the default $\varepsilon=1$),
and for Stokes3D we test $\varepsilon\in\{0.06,\,600\}$ (in addition to the default $\varepsilon=1/6$).
\Cref{tab:stokes2d_lambda_sensitivity,tab:stokes3d_lambda_sensitivity} report the boundary residual
$\|\mathbf{u}_M\|_{L^2(\partial\Omega)}$ as well as the empirical $H^1$- and $L^2$-rates.

Across all tested $\varepsilon$, the $\dot H^1$ and $L^2$ convergence rates are essentially unchanged,
while $\|\mathbf{u}_n\|_{L^2(\partial\Omega)}$ decreases mildly as $\varepsilon$ becomes smaller
(i.e., a stronger penalty). This indicates that our results are robust to the choice of $\varepsilon$,
hence we keep the above fixed defaults.

\section{Conclusion}\label{sec:conclusion}
The construction of function spaces that inherently satisfy physical constraints offers significant advantages. By viewing the Finite Neuron Space (FNS) as a Quasi-Monte Carlo approximation of structure-preserving kernels, we developed a framework for divergence-free shallow linear networks. This synthesis combines the theoretical rigor of vector-valued RKHS analysis, the constructive methods of differential operators, and the computational efficiency of linearized neural networks. The resulting framework provides an exactly divergence-free, linearly trainable ansatz class whose approximation rates match those of the underlying scalar linearized-network theory under the stated domain and sampling assumptions.

\section{Extra Tables and Figures}\label{sec:extra-tables-figures}

\begin{table}[htbp]
\centering
\caption{Divergence-free $L^2$ approximation in $d=2$: relative $L^2$ errors and empirical rates.}
\label{tab:l2_rates_d2_k1_k4}
\small
\setlength{\tabcolsep}{4pt}
\begin{tabular}{c cc cc cc cc}
\hline
& \multicolumn{2}{c}{$k=1$}
& \multicolumn{2}{c}{$k=2$}
& \multicolumn{2}{c}{$k=3$}
& \multicolumn{2}{c}{$k=4$} \\
\cline{2-3}\cline{4-5}\cline{6-7}\cline{8-9}
$n$
& $\mathrm{Err}_{L^2}^{\mathrm{rel}}$ & $0.75$
& $\mathrm{Err}_{L^2}^{\mathrm{rel}}$ & $1.25$
& $\mathrm{Err}_{L^2}^{\mathrm{rel}}$ & $1.75$
& $\mathrm{Err}_{L^2}^{\mathrm{rel}}$ & $2.25$ \\
\hline
26   & 7.500e-01 & --    & 4.472e-01 & --    & 3.376e-01 & --    & 5.266e-01 & --    \\
51   & 4.603e-01 & 0.72 & 2.267e-01 & 1.01 & 1.393e-01 & 1.31 & 6.416e-02 & 3.12 \\
100  & 2.559e-01 & 0.87 & 7.544e-02 & 1.63 & 1.695e-02 & 3.13 & 7.005e-03 & 3.29 \\
202  & 1.843e-01 & 0.47 & 3.267e-02 & 1.19 & 5.807e-03 & 1.52 & 1.133e-03 & 2.59 \\
400  & 1.092e-01 & 0.77 & 1.325e-02 & 1.32 & 1.734e-03 & 1.77 & 2.302e-04 & 2.33 \\
801  & 6.539e-02 & 0.74 & 5.867e-03 & 1.17 & 4.756e-04 & 1.86 & 5.292e-05 & 2.12 \\
1604 & 3.859e-02 & 0.76 & 2.239e-03 & 1.39 & 1.350e-04 & 1.81 & 9.606e-06 & 2.46 \\
3202 & 2.288e-02 & 0.76 & 9.089e-04 & 1.30 & 3.750e-05 & 1.85 & 1.917e-06 & 2.33 \\
\hline
\end{tabular}
\end{table}

%%%%%%%%%%%%%

\begin{table}[htbp]
\centering
\caption{Divergence-free $L^2$ approximation in $d=3$: relative $L^2$ errors and empirical rates.}
\label{tab:l2_rates_d3_k1_k4}
\small
\setlength{\tabcolsep}{4pt}
\begin{tabular}{c cc cc cc cc}
\hline
& \multicolumn{2}{c}{$k=1$}
& \multicolumn{2}{c}{$k=2$}
& \multicolumn{2}{c}{$k=3$}
& \multicolumn{2}{c}{$k=4$} \\
\cline{2-3}\cline{4-5}\cline{6-7}\cline{8-9}
$n$
& $\mathrm{Err}_{L^2}^{\mathrm{rel}}$ & $0.67$
& $\mathrm{Err}_{L^2}^{\mathrm{rel}}$ & $1.00$
& $\mathrm{Err}_{L^2}^{\mathrm{rel}}$ & $1.33$
& $\mathrm{Err}_{L^2}^{\mathrm{rel}}$ & $1.66$ \\
\hline
30   & 7.397e-01 & --    & 7.649e-01 & --    & 5.914e-01 & --    & 6.899e-01 & --    \\
60   & 6.363e-01 & 0.22 & 4.241e-01 & 0.85 & 3.888e-01 & 0.60 & 3.366e-01 & 1.04 \\
118  & 4.668e-01 & 0.46 & 4.055e-01 & 0.07 & 1.867e-01 & 1.08 & 1.791e-01 & 0.93 \\
235  & 3.609e-01 & 0.37 & 1.500e-01 & 1.44 & 6.536e-02 & 1.52 & 3.050e-02 & 2.57 \\
468  & 2.568e-01 & 0.49 & 7.401e-02 & 1.03 & 2.500e-02 & 1.40 & 8.917e-03 & 1.79 \\
942  & 1.685e-01 & 0.60 & 4.097e-02 & 0.85 & 8.846e-03 & 1.49 & 3.174e-03 & 1.48 \\
1871 & 1.164e-01 & 0.54 & 1.984e-02 & 1.06 & 3.729e-03 & 1.26 & 9.185e-04 & 1.81 \\
3752 & 7.463e-02 & 0.64 & 9.722e-03 & 1.03 & 1.439e-03 & 1.37 & 2.894e-04 & 1.66 \\
\hline
\end{tabular}
\end{table}

%%%%%%%%%%%%%

\begin{table}[htbp]
\centering
\caption{Solving the Stokes equation in $d=2$: relative $\dot{H}^1$ errors and empirical rates.}
\label{tab:stokes_h1_rates_d2_k2_k5}
\small
\setlength{\tabcolsep}{4pt}
\begin{tabular}{c cc cc cc cc}
\hline
& \multicolumn{2}{c}{$k=2$}
& \multicolumn{2}{c}{$k=3$}
& \multicolumn{2}{c}{$k=4$}
& \multicolumn{2}{c}{$k=5$} \\
\cline{2-3}\cline{4-5}\cline{6-7}\cline{8-9}
$n$
& $\mathrm{Err}_{\dot{H}^1}^{\mathrm{rel}}$ & 0.75
& $\mathrm{Err}_{\dot{H}^1}^{\mathrm{rel}}$ & 1.25
& $\mathrm{Err}_{\dot{H}^1}^{\mathrm{rel}}$ & 1.75
& $\mathrm{Err}_{\dot{H}^1}^{\mathrm{rel}}$ & 2.25 \\
\hline
26   & 8.888e-01 & --    & 8.441e-01 & --    & 9.231e-01 & --    & 9.699e-01 & --    \\
51   & 7.161e-01 & 0.32 & 4.262e-01 & 1.01 & 5.700e-01 & 0.72 & 4.425e-01 & 1.16 \\
100  & 3.953e-01 & 0.88 & 1.741e-01 & 1.33 & 1.121e-01 & 2.42 & 8.611e-02 & 2.43 \\
202  & 2.635e-01 & 0.58 & 8.863e-02 & 0.96 & 3.093e-02 & 1.83 & 2.058e-02 & 2.04 \\
400  & 1.600e-01 & 0.73 & 3.638e-02 & 1.30 & 1.017e-02 & 1.63 & 3.360e-03 & 2.65 \\
801  & 1.005e-01 & 0.67 & 1.574e-02 & 1.21 & 3.256e-03 & 1.64 & 7.015e-04 & 2.26 \\
1604 & 5.958e-02 & 0.75 & 6.649e-03 & 1.24 & 1.033e-03 & 1.65 & 1.471e-04 & 2.25 \\
3202 & 3.495e-02 & 0.77 & 2.683e-03 & 1.31 & 2.820e-04 & 1.88 & 3.063e-05 & 2.27 \\
\hline
\end{tabular}
\end{table}

%%%%%%%%%%%%%

\begin{table}[htbp]
\centering
\caption{Solving the Stokes equation in $d=2$: relative $L^2$ errors and empirical rates.}
\label{tab:stokes_l2_rates_d2_k2_k5}
\small
\setlength{\tabcolsep}{4pt}
\begin{tabular}{c cc cc cc cc}
\hline
& \multicolumn{2}{c}{$k=2$}
& \multicolumn{2}{c}{$k=3$}
& \multicolumn{2}{c}{$k=4$}
& \multicolumn{2}{c}{$k=5$} \\
\cline{2-3}\cline{4-5}\cline{6-7}\cline{8-9}
$n$
& $\mathrm{Err}_{L^2}^{\mathrm{rel}}$ & 1.25
& $\mathrm{Err}_{L^2}^{\mathrm{rel}}$ & 1.75
& $\mathrm{Err}_{L^2}^{\mathrm{rel}}$ & 2.25
& $\mathrm{Err}_{L^2}^{\mathrm{rel}}$ & 2.75 \\
\hline
26   & 8.603e-01 & --    & 7.324e-01 & --    & 1.058e+00 & --    & 1.025e+00 & --    \\
51   & 5.817e-01 & 0.58 & 2.868e-01 & 1.39 & 4.408e-01 & 1.30 & 2.853e-01 & 1.90 \\
100  & 1.903e-01 & 1.66 & 7.597e-02 & 1.97 & 4.568e-02 & 3.37 & 3.461e-02 & 3.13 \\
202  & 8.641e-02 & 1.12 & 2.502e-02 & 1.58 & 8.509e-03 & 2.39 & 5.750e-03 & 2.55 \\
400  & 3.512e-02 & 1.32 & 7.095e-03 & 1.84 & 1.869e-03 & 2.22 & 6.393e-04 & 3.22 \\
801  & 1.505e-02 & 1.22 & 2.078e-03 & 1.77 & 4.238e-04 & 2.14 & 9.106e-05 & 2.81 \\
1604 & 5.940e-03 & 1.34 & 6.268e-04 & 1.73 & 9.770e-05 & 2.11 & 1.371e-05 & 2.73 \\
3202 & 2.301e-03 & 1.37 & 1.734e-04 & 1.86 & 1.842e-05 & 2.41 & 1.980e-06 & 2.80 \\
\hline
\end{tabular}
\end{table}

%%%%%%%%%%%

%%%%%%%%%%%

\begin{table}[htbp]
\centering
\caption{Solving the Stokes equation in $d=3$: relative $\dot{H}^1$ errors and empirical rates.}
\label{tab:stokes_h1_rates_d3_k2_k5}
\small
\setlength{\tabcolsep}{4pt}
\begin{tabular}{c cc cc cc cc}
\hline
& \multicolumn{2}{c}{$k=2$}
& \multicolumn{2}{c}{$k=3$}
& \multicolumn{2}{c}{$k=4$}
& \multicolumn{2}{c}{$k=5$} \\
\cline{2-3}\cline{4-5}\cline{6-7}\cline{8-9}
$n$
& $\mathrm{Err}_{\dot{H}^1}^{\mathrm{rel}}$ & 0.67
& $\mathrm{Err}_{\dot{H}^1}^{\mathrm{rel}}$ & 1.00
& $\mathrm{Err}_{\dot{H}^1}^{\mathrm{rel}}$ & 1.33
& $\mathrm{Err}_{\dot{H}^1}^{\mathrm{rel}}$ & 1.67 \\
\hline
30   & 9.870e-01 & --    & 9.933e-01 & --    & 9.967e-01 & --    & 9.978e-01 & --    \\
60   & 9.091e-01 & 0.12 & 8.595e-01 & 0.21 & 9.341e-01 & 0.09 & 9.740e-01 & 0.03 \\
118  & 8.086e-01 & 0.17 & 7.613e-01 & 0.18 & 7.595e-01 & 0.31 & 7.770e-01 & 0.33 \\
235  & 5.978e-01 & 0.44 & 4.323e-01 & 0.82 & 3.729e-01 & 1.03 & 3.930e-01 & 0.99 \\
468  & 4.608e-01 & 0.38 & 2.468e-01 & 0.81 & 1.703e-01 & 1.14 & 1.357e-01 & 1.54 \\
942  & 3.165e-01 & 0.54 & 1.291e-01 & 0.93 & 7.222e-02 & 1.23 & 4.374e-02 & 1.62 \\
1871 & 2.103e-01 & 0.60 & 6.843e-02 & 0.93 & 3.144e-02 & 1.21 & 1.556e-02 & 1.51 \\
3752 & 1.367e-01 & 0.62 & 3.630e-02 & 0.91 & 1.327e-02 & 1.24 & 4.981e-03 & 1.64 \\
\hline
\end{tabular}
\end{table}

%%%%%%%%%%%

\begin{table}[htbp]
\centering
\caption{Solving the Stokes equation in $d=3$: relative $L^2$ errors and empirical rates.}
\label{tab:stokes_l2_rates_d3_k2_k5}
\small
\setlength{\tabcolsep}{4pt}
\begin{tabular}{c cc cc cc cc}
\hline
& \multicolumn{2}{c}{$k=2$}
& \multicolumn{2}{c}{$k=3$}
& \multicolumn{2}{c}{$k=4$}
& \multicolumn{2}{c}{$k=5$} \\
\cline{2-3}\cline{4-5}\cline{6-7}\cline{8-9}
$n$
& $\mathrm{Err}_{L^2}^{\mathrm{rel}}$ & 1.00
& $\mathrm{Err}_{L^2}^{\mathrm{rel}}$ & 1.33
& $\mathrm{Err}_{L^2}^{\mathrm{rel}}$ & 1.67
& $\mathrm{Err}_{L^2}^{\mathrm{rel}}$ & 2.00 \\
\hline
30   & 9.759e-01 & --    & 9.817e-01 & --    & 9.947e-01 & --    & 9.994e-01 & --    \\
60   & 8.487e-01 & 0.20 & 7.754e-01 & 0.34 & 8.566e-01 & 0.22 & 9.323e-01 & 0.10 \\
118  & 7.205e-01 & 0.24 & 6.457e-01 & 0.27 & 6.544e-01 & 0.40 & 6.415e-01 & 0.55 \\
235  & 4.107e-01 & 0.82 & 2.871e-01 & 1.18 & 2.473e-01 & 1.41 & 2.643e-01 & 1.29 \\
468  & 2.578e-01 & 0.68 & 1.249e-01 & 1.21 & 8.429e-02 & 1.56 & 6.574e-02 & 2.02 \\
942  & 1.317e-01 & 0.96 & 4.767e-02 & 1.38 & 2.627e-02 & 1.67 & 1.552e-02 & 2.06 \\
1871 & 6.493e-02 & 1.03 & 1.932e-02 & 1.32 & 8.798e-03 & 1.59 & 4.359e-03 & 1.85 \\
3752 & 3.158e-02 & 1.04 & 8.000e-03 & 1.27 & 2.936e-03 & 1.58 & 1.110e-03 & 1.97 \\
\hline
\end{tabular}
\end{table}

%%%%%%%%%%%

%%%%%%%%%%%

\begin{figure}[htbp]
    \centering
    \includegraphics[width=\linewidth]{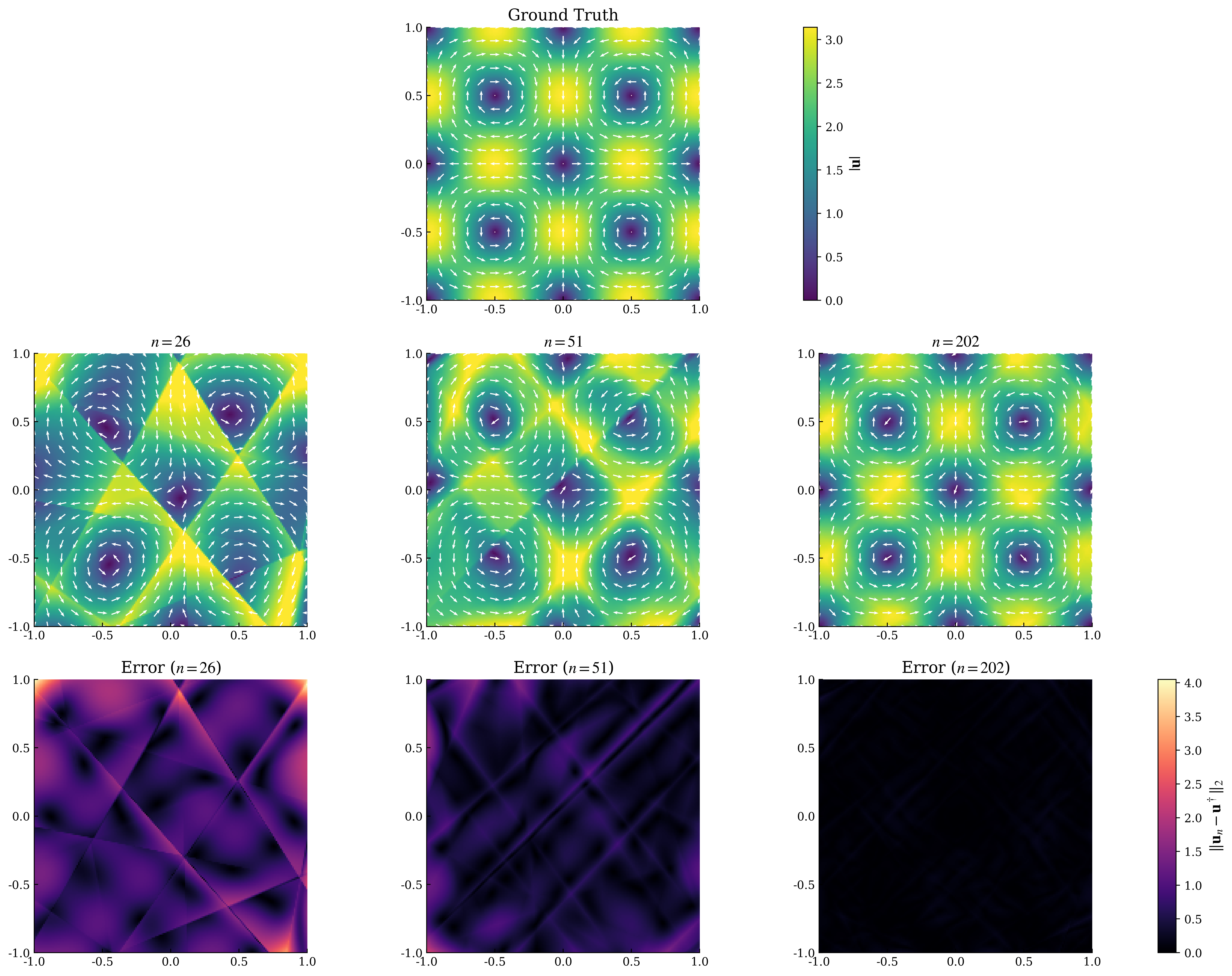}
    \caption{divergence-free $L^2$ approximation when $d=2, k=2$
    \emph{Top:} ground-truth divergence-free field $\mathbf{u}^\dagger$.
    \emph{Middle:} learned approximations $\mathbf{u}_n$ from three checkpoints with increasing neuron count $n$ (left to right).
    \emph{Bottom:} pointwise error heatmaps $e(x)=\|\mathbf{u}_n(x)-\mathbf{u}^\dagger(x)\|_2$ for the same three models.
    In each velocity panel, the background color encodes the speed $|\mathbf{u}(x)|$, while arrows indicate the flow direction.}
    \label{fig:divfree2d-flow}
\end{figure}

\begin{figure}[htbp]
    \centering
    \includegraphics[width=\linewidth]{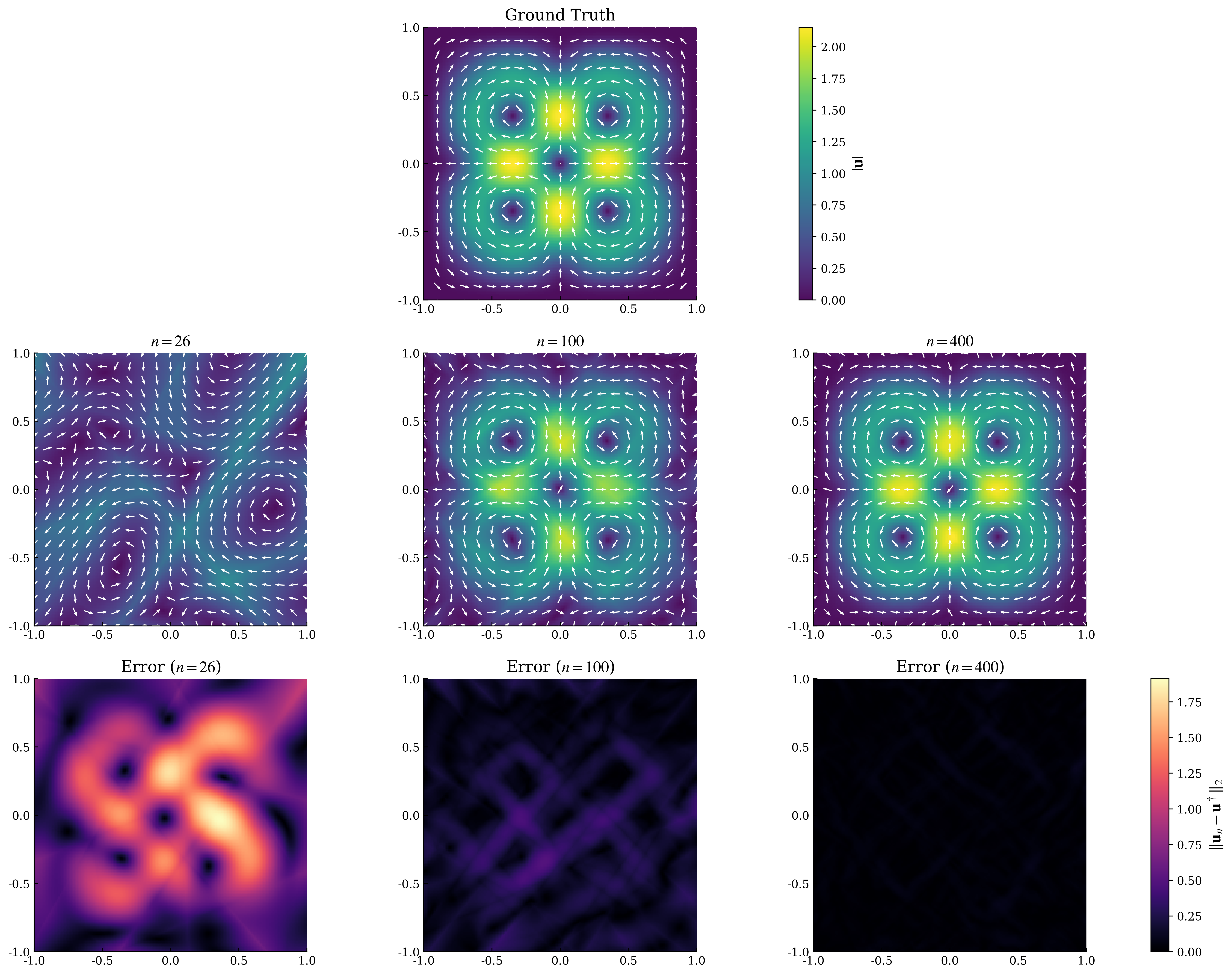}
    \caption{stokes equaition when $d=2, k=2$.
    \emph{Top:} reference Stokes velocity field $\mathbf{u}^\dagger$.
    \emph{Middle:} learned divergence-free approximations $\mathbf{u}_n$ with increasing neuron count $n$ (left to right).
    \emph{Bottom:} pointwise error heatmaps $e(x)=\|\mathbf{u}_n(x)-\mathbf{u}^\dagger(x)\|_2$.
    In each velocity panel, the background color encodes the speed $|\mathbf{u}(x)|$, and arrows indicate the velocity direction.}
    \label{fig:stokes2d-flow}
\end{figure}

%%%%%%%%%%%%%%%%%%%%%%%%%%%%%%%

\begin{table}[htbp]
\centering
\caption{Solving classical lid-driven cavity flow by divergence-free FNS: errors and empirical rates for $k=3$.}
\label{tab:classic_lid_k3}
\small
\setlength{\tabcolsep}{4pt}

\begin{minipage}[t]{0.48\textwidth}
\centering
\textbf{(a) $L^2$ errors}

\vspace{0.3em}
\begin{tabular}{c cc cc}
\hline
& \multicolumn{2}{c}{global}
& \multicolumn{2}{c}{inner} \\
\cline{2-3}\cline{4-5}
$n$
& $\mathrm{Err}_{L^2}$ & --
& $\mathrm{Err}_{L^2}^{\mathrm{inner}}$ & -- \\
\hline
51   & 4.669e-01 & --   & 4.495e-01 & --   \\
100  & 2.090e-01 & 1.19 & 1.859e-01 & 1.31 \\
202  & 1.036e-01 & 1.00 & 8.876e-02 & 1.05 \\
400  & 7.526e-02 & 0.47 & 6.377e-02 & 0.48 \\
801  & 5.430e-02 & 0.47 & 4.070e-02 & 0.65 \\
1604 & 4.366e-02 & 0.31 & 3.264e-02 & 0.32 \\
3202 & 3.709e-02 & 0.24 & 2.837e-02 & 0.20 \\
\hline
\end{tabular}
\end{minipage}
\hfill
\begin{minipage}[t]{0.48\textwidth}
\centering
\textbf{(b) $\dot H^1$ errors}

\vspace{0.3em}
\begin{tabular}{c cc cc}
\hline
& \multicolumn{2}{c}{global}
& \multicolumn{2}{c}{inner} \\
\cline{2-3}\cline{4-5}
$n$
& $\mathrm{Err}_{\dot H^1}$ & --
& $\mathrm{Err}_{\dot H^1}^{\mathrm{inner}}$ & -- \\
\hline
51   & 5.361e+00 & --   & 2.058e+00 & --   \\
100  & 5.080e+00 & 0.08 & 1.810e+00 & 0.19 \\
202  & 4.604e+00 & 0.14 & 1.395e+00 & 0.37 \\
400  & 4.401e+00 & 0.07 & 1.417e+00 & -0.02 \\
801  & 4.293e+00 & 0.04 & 9.059e-01 & 0.64 \\
1604 & 4.081e+00 & 0.07 & 7.808e-01 & 0.21 \\
3202 & 3.777e+00 & 0.11 & 5.513e-01 & 0.50 \\
\hline
\end{tabular}
\end{minipage}

\end{table}

%========================================================
% lid-driven cavity flow: Error heat maps
%========================================================

\begin{figure}[htbp]
  \centering
    \includegraphics[width=0.45\textwidth]{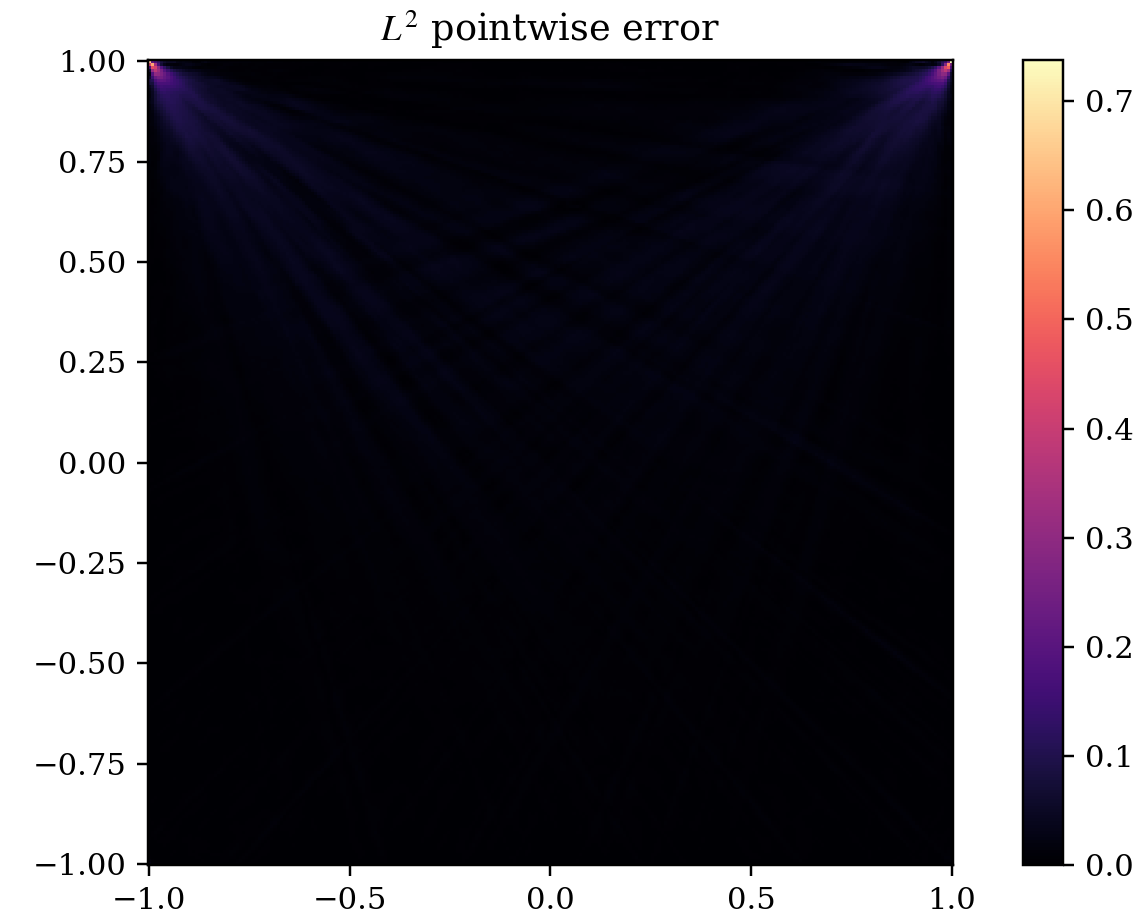}
    \includegraphics[width=0.45\textwidth]{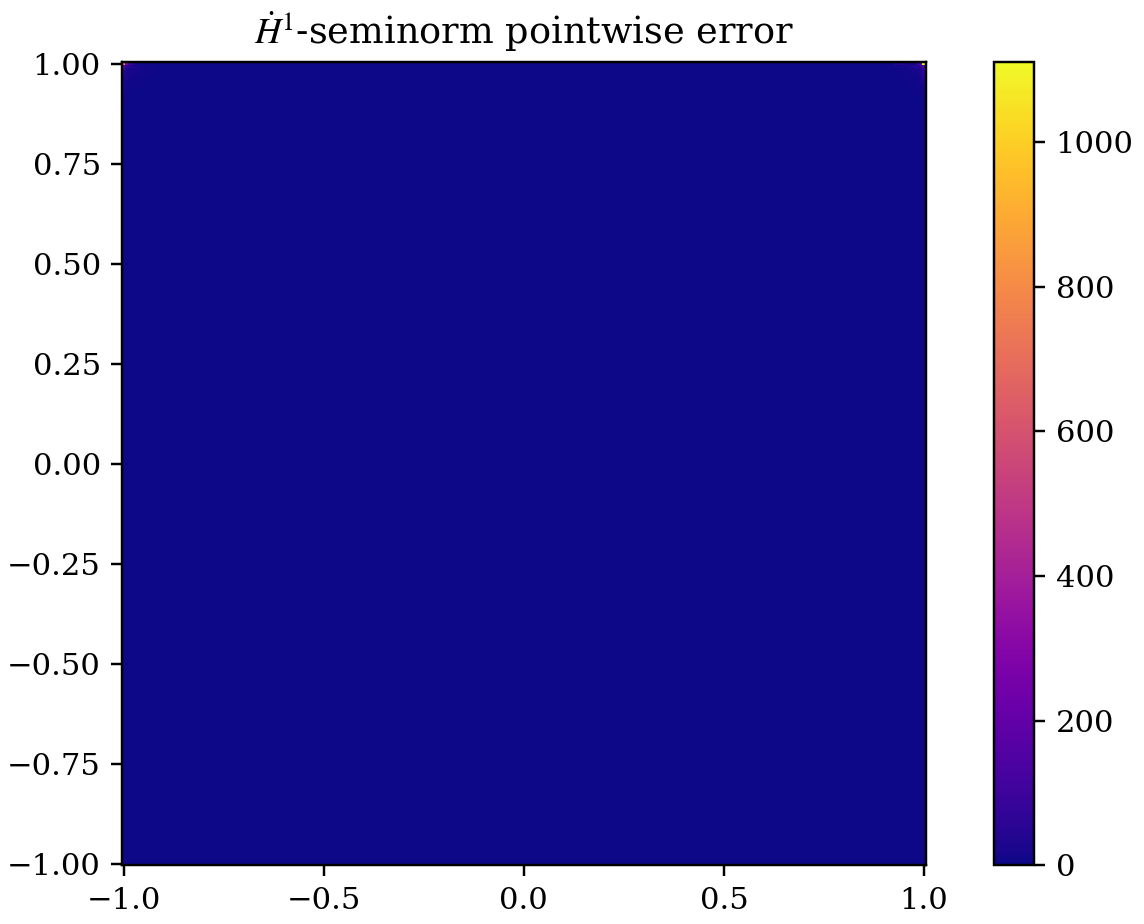}
    \includegraphics[width=0.45\textwidth]{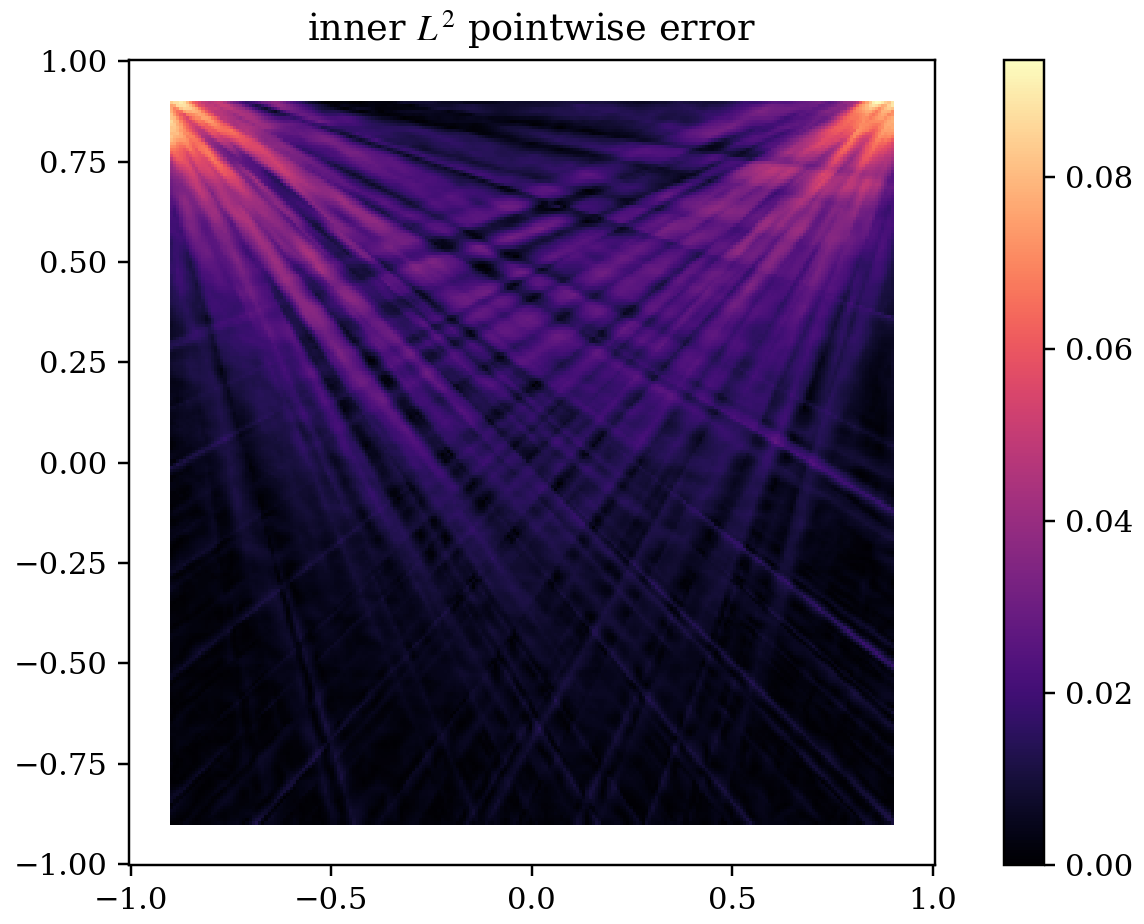}
    \includegraphics[width=0.45\textwidth]{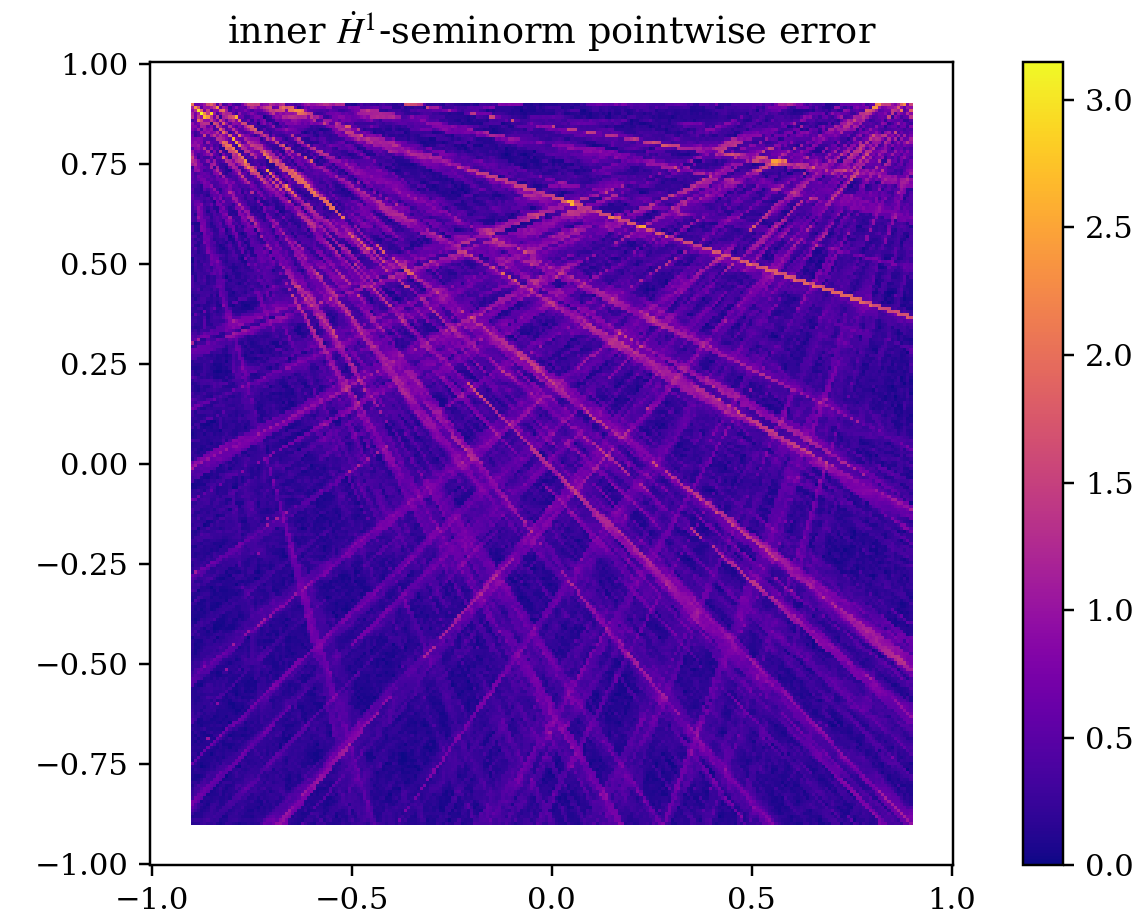}
  \caption{Solving classical lid-driven cavity flow by divergence-free FNS, $k=2, n=3202$. Restricted evaluation on $[-0.9, 0.9]^2$ shows that the large errors occur near the top two corners, especially for the $\dot H^1$ error.}
  \label{fig:classic-lid-error-heatmap}
\end{figure}

\begin{figure}[htbp]
  \centering
  \includegraphics[width=0.45\textwidth]{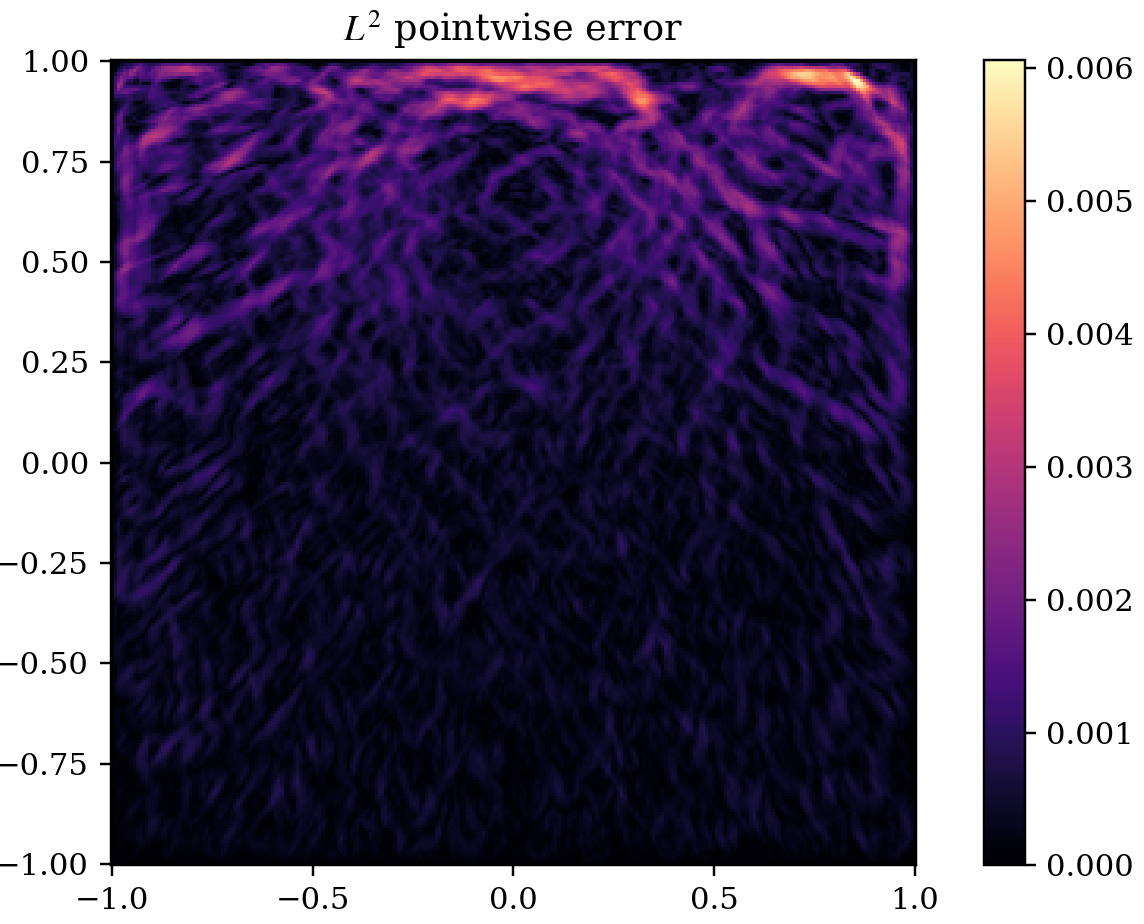}
  \includegraphics[width=0.45\textwidth]{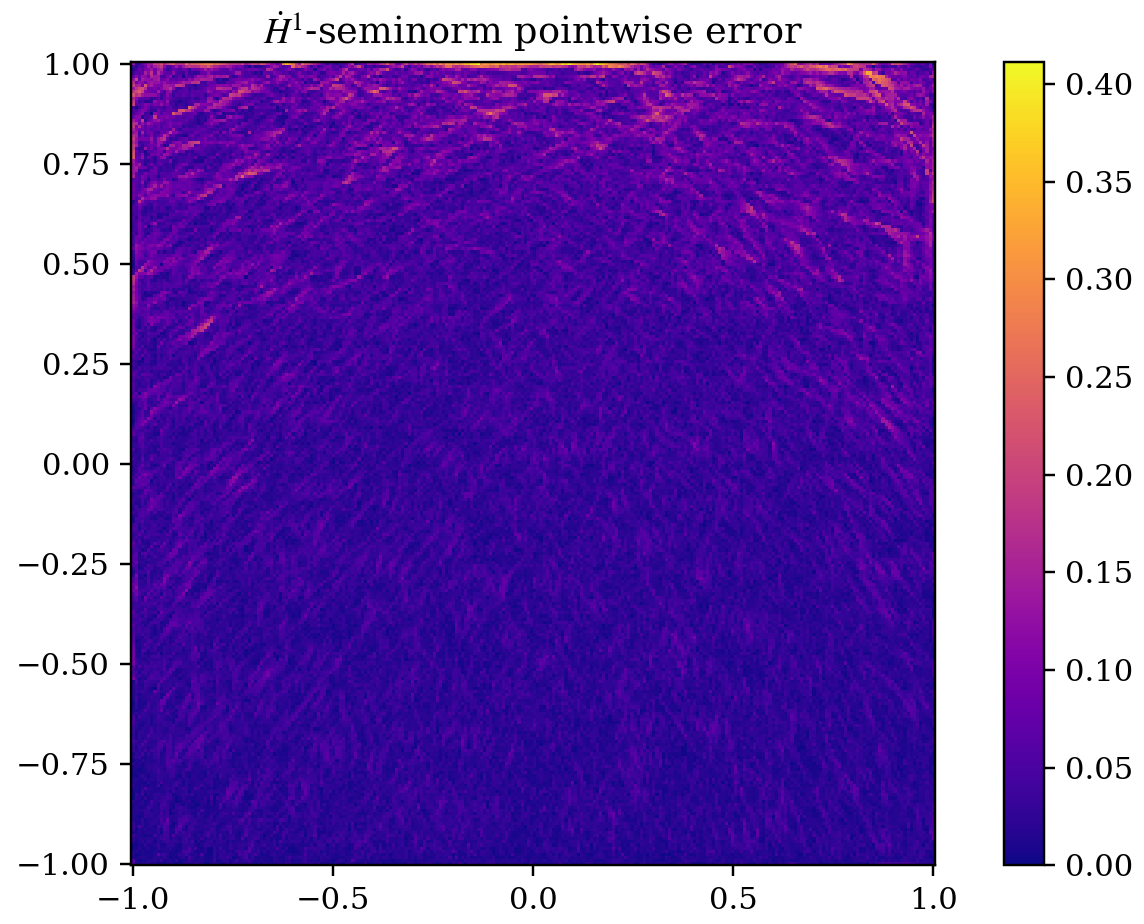}
  \caption{Solving regularized lid-driven cavity flow by divergence-free FNS,  $k=2, n=3202$. Different from Figure~\ref{fig:classic-lid-error-heatmap}, no large error spike here due to the regularized lid.}
  \label{fig:regularized-lid-error-heatmap}
\end{figure}

%%%%%%%%%%%%%%%%%%%%%%%%%%%%%%%%%%

%%%%%%%%%%%%% table: regularized lid %%%%%%%%%%%%%
\begin{table}[htbp]
\centering
\caption{Regularized lid-driven cavity flow: errors and empirical rates.}
\label{tab:reg_lid_k2_k3}
\small
\setlength{\tabcolsep}{4pt}

\begin{minipage}[t]{0.48\textwidth}
\centering
\textbf{(a) $L^2$ errors}

\vspace{0.3em}
\begin{tabular}{c cc cc}
\hline
& \multicolumn{2}{c}{$k=2$}
& \multicolumn{2}{c}{$k=3$} \\
\cline{2-3}\cline{4-5}
$n$
& $\mathrm{Err}_{L^2}$ & 1.25
& $\mathrm{Err}_{L^2}$ & 1.75 \\
\hline
51   & 5.241e-01 & --    & 6.084e-01 & --    \\
100  & 3.863e-01 & 0.45 & 1.311e-01 & 2.28 \\
202  & 1.769e-01 & 1.11 & 2.864e-02 & 2.16 \\
400  & 5.080e-02 & 1.83 & 5.531e-03 & 2.41 \\
801  & 1.731e-02 & 1.55 & 1.366e-03 & 2.01 \\
1604 & 5.421e-03 & 1.67 & 3.180e-04 & 2.10 \\
3202 & 1.708e-03 & 1.67 & 1.034e-04 & 1.62 \\
\hline
\end{tabular}
\end{minipage}
\hfill
\begin{minipage}[t]{0.48\textwidth}
\centering
\textbf{(b) $\dot H^1$ errors}

\vspace{0.3em}
\begin{tabular}{c cc cc}
\hline
& \multicolumn{2}{c}{$k=2$}
& \multicolumn{2}{c}{$k=3$} \\
\cline{2-3}\cline{4-5}
$n$
& $\mathrm{Err}_{\dot H^1}$ & 0.75
& $\mathrm{Err}_{\dot H^1}$ & 1.25 \\
\hline
51   & 3.120e+00 & --    & 2.483e+00 & --    \\
100  & 2.162e+00 & 0.54 & 8.205e-01 & 1.64 \\
202  & 1.196e+00 & 0.84 & 3.383e-01 & 1.26 \\
400  & 6.310e-01 & 0.94 & 1.194e-01 & 1.52 \\
801  & 3.375e-01 & 0.90 & 4.886e-02 & 1.29 \\
1604 & 1.823e-01 & 0.89 & 1.744e-02 & 1.48 \\
3202 & 1.022e-01 & 0.84 & 7.140e-03 & 1.29 \\
\hline
\end{tabular}
\end{minipage}

\end{table}

%========================================================
% classic lid-driven cavity flow figures: k=3
%========================================================

\begin{figure}[htbp]
  \centering
  \includegraphics[width=0.45\textwidth]{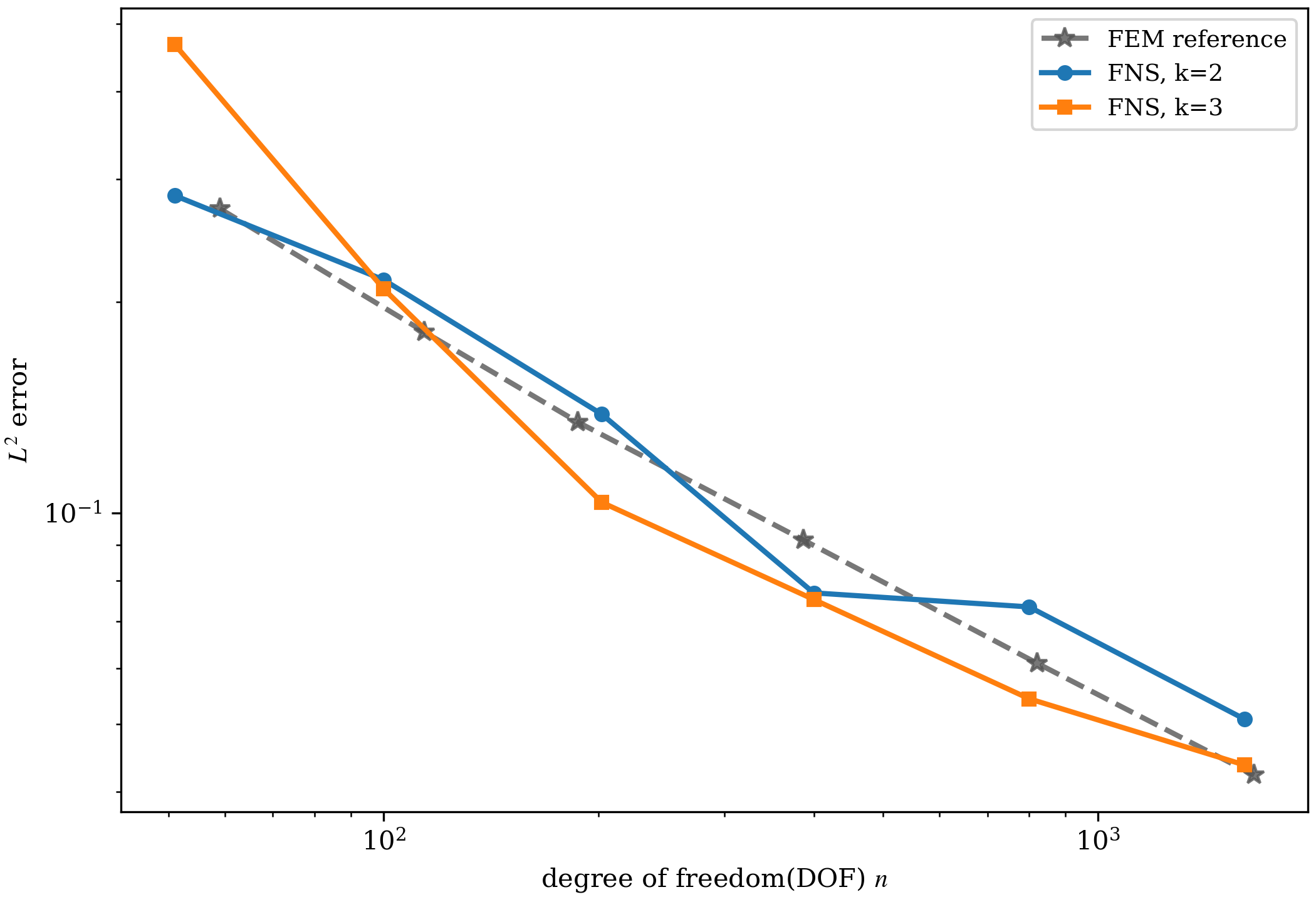}
  \includegraphics[width=0.45\textwidth]{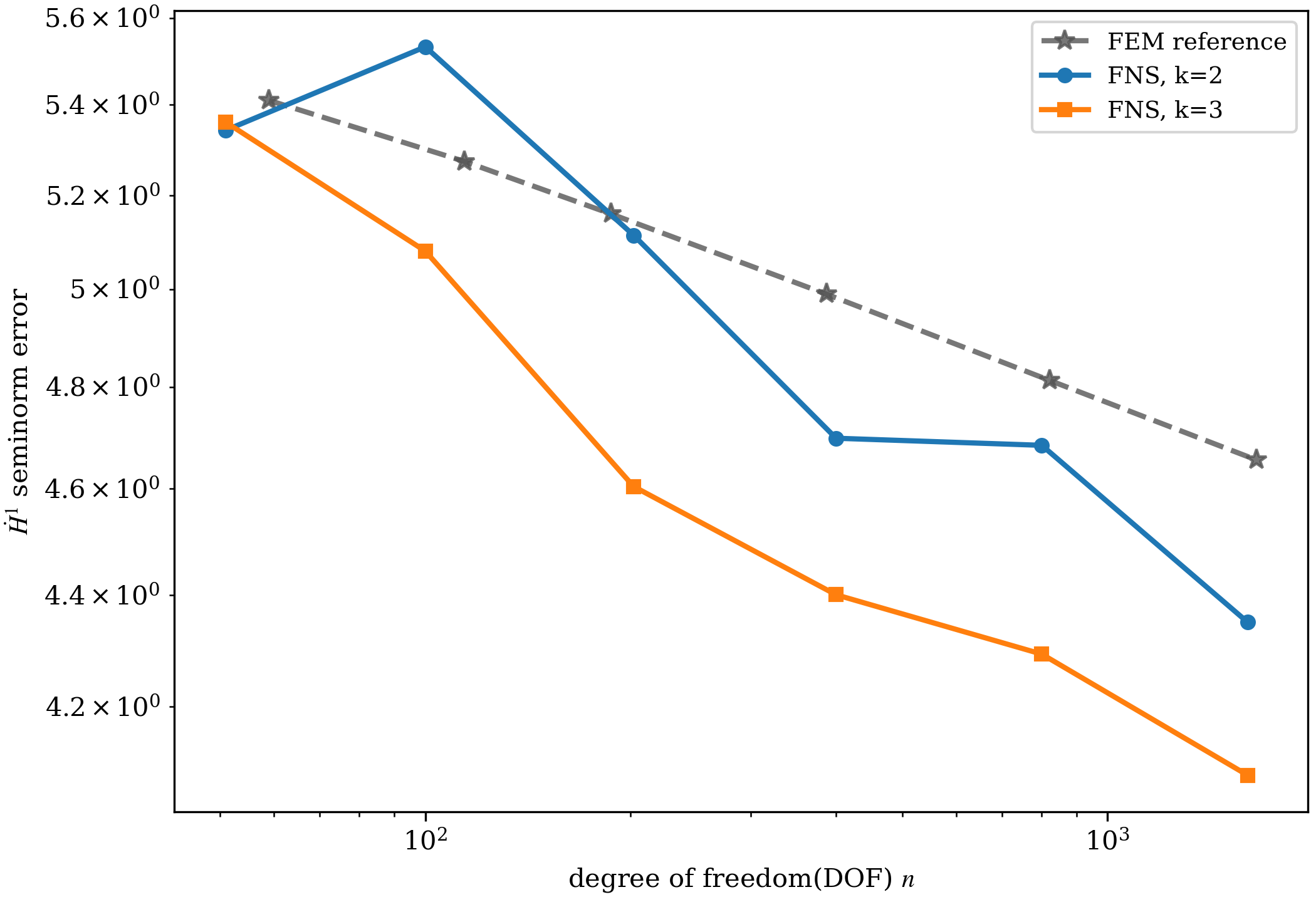}
  \caption{Solving classical lid-driven cavity flow by divergence-free FNS compared with finite element method: $L^2$ error and $\dot{H}^1$ seminorm error decay with degree of freedom.
  Left: $L^2$ error. Right: $\dot{H}^1$ seminorm error.}
  \label{fig:classic-lid-fem&fns-compare}
\end{figure}
%%%%%%%%%

%%%%%%%%%%%%%%%%%%%% boundary ablations %%%%%%%%%%%%%%%%%%%%

\begin{table}[htbp]
\centering
\caption{Sensitivity analysis of the Stokes problem in $d=2$ with $k=4$. Panels (a)--(c) report the results for different boundary penalty parameters $\lambda_{\partial\Omega}$.}
\label{tab:stokes2d_lambda_sensitivity}
\small
\setlength{\tabcolsep}{4pt}

\begin{minipage}[t]{\textwidth}
\centering
\textbf{(a) $\lambda_{\partial\Omega}=0.01$}

\vspace{0.3em}
\begin{tabular}{c c cc cc}
\hline
$n$
& $\mathrm{Err}_{\partial\Omega}$
& $\mathrm{Err}_{\dot{H}^1}^{\mathrm{rel}}$ & 1.75
& $\mathrm{Err}_{L^2}^{\mathrm{rel}}$ & 2.25 \\
\hline
26   & 8.4827e+00 & 8.9048e-01 & --      & 3.1491e+00 & --      \\
51   & 2.1631e+00 & 5.6881e-01 & 0.6653 & 6.7923e-01 & 2.2767 \\
100  & 1.8038e-01 & 1.1204e-01 & 2.4128 & 4.8687e-02 & 3.9141 \\
202  & 3.5616e-02 & 3.0930e-02 & 1.8307 & 8.5358e-03 & 2.4764 \\
400  & 8.6296e-03 & 1.0169e-02 & 1.6282 & 1.8686e-03 & 2.2235 \\
801  & 1.9233e-03 & 3.2562e-03 & 1.6400 & 4.2381e-04 & 2.1366 \\
1604 & 4.5605e-04 & 1.0327e-03 & 1.6538 & 9.7701e-05 & 2.1132 \\
3202 & 7.3642e-05 & 2.8202e-04 & 1.8777 & 1.8423e-05 & 2.4134 \\
\hline
\end{tabular}
\end{minipage}

\vspace{0.8em}

\begin{minipage}[t]{\textwidth}
\centering
\textbf{(b) $\lambda_{\partial\Omega}=1$}

\vspace{0.3em}
\begin{tabular}{c c cc cc}
\hline
$n$
& $\mathrm{Err}_{\partial\Omega}$
& $\mathrm{Err}_{\dot{H}^1}^{\mathrm{rel}}$ & 1.75
& $\mathrm{Err}_{L^2}^{\mathrm{rel}}$ & 2.25 \\
\hline
26   & 1.6330e+00 & 9.2313e-01 & --      & 1.0584e+00 & --      \\
51   & 1.5296e+00 & 5.6999e-01 & 0.7157 & 4.4079e-01 & 1.3001 \\
100  & 1.6745e-01 & 1.1208e-01 & 2.4154 & 4.5680e-02 & 3.3666 \\
202  & 3.4319e-02 & 3.0935e-02 & 1.8309 & 8.5089e-03 & 2.3902 \\
400  & 8.4234e-03 & 1.0170e-02 & 1.6283 & 1.8689e-03 & 2.2186 \\
801  & 1.8885e-03 & 3.2563e-03 & 1.6400 & 4.2381e-04 & 2.1368 \\
1604 & 4.4975e-04 & 1.0327e-03 & 1.6538 & 9.7695e-05 & 2.1133 \\
3202 & 7.2954e-05 & 2.8202e-04 & 1.8777 & 1.8423e-05 & 2.4133 \\
\hline
\end{tabular}
\end{minipage}

\vspace{0.8em}

\begin{minipage}[t]{\textwidth}
\centering
\textbf{(c) $\lambda_{\partial\Omega}=100$}

\vspace{0.3em}
\begin{tabular}{c c cc cc}
\hline
$n$
& $\mathrm{Err}_{\partial\Omega}$
& $\mathrm{Err}_{\dot{H}^1}^{\mathrm{rel}}$ & 1.75
& $\mathrm{Err}_{L^2}^{\mathrm{rel}}$ & 2.25 \\
\hline
26   & 1.4388e-01 & 9.8421e-01 & --      & 9.7238e-01 & --      \\
51   & 4.2847e-01 & 7.6651e-01 & 0.3711 & 7.1721e-01 & 0.4518 \\
100  & 9.6074e-02 & 1.2375e-01 & 2.7083 & 5.7570e-02 & 3.7460 \\
202  & 1.7822e-02 & 3.3002e-02 & 1.8798 & 9.9664e-03 & 2.4943 \\
400  & 4.6739e-03 & 1.0589e-02 & 1.6639 & 2.0391e-03 & 2.3225 \\
801  & 1.0859e-03 & 3.3271e-03 & 1.6672 & 4.4238e-04 & 2.2006 \\
1604 & 2.5461e-04 & 1.0465e-03 & 1.6657 & 9.9876e-05 & 2.1432 \\
3202 & 4.8178e-05 & 2.8323e-04 & 1.8906 & 1.8592e-05 & 2.4321 \\
\hline
\end{tabular}
\end{minipage}

\end{table}

%%%%%%%%%

\begin{table}[htbp]
\centering
\caption{Sensitivity analysis of the Stokes problem in $d=3$ with $k=4$. Panels (a)--(c) report the results for different boundary penalty parameters $\lambda_{\partial\Omega}$.}
\label{tab:stokes3d_lambda_sensitivity}
\small
\setlength{\tabcolsep}{4pt}

\begin{minipage}[t]{\textwidth}
\centering
\textbf{(a) $\lambda_{\partial\Omega}=0.06$}

\vspace{0.3em}
\begin{tabular}{c c cc cc}
\hline
$n$ & $\mathrm{Err}_{\partial\Omega}$ & $\mathrm{Err}_{\dot{H}^1}^{\mathrm{rel}}$ & 1.33 & $\mathrm{Err}_{L^2}^{\mathrm{rel}}$ & 1.67 \\
\hline
30   & 9.8541e-01 & 9.9546e-01 & --      & 1.0053e+00 & --      \\
60   & 2.3172e+00 & 9.2277e-01 & 0.1094 & 8.1890e-01 & 0.2958 \\
118  & 2.3789e+00 & 7.4795e-01 & 0.3106 & 6.3375e-01 & 0.3789 \\
235  & 1.3846e+00 & 3.6676e-01 & 1.0344 & 2.3825e-01 & 1.4202 \\
468  & 5.2821e-01 & 1.6852e-01 & 1.1288 & 8.1871e-02 & 1.5506 \\
942  & 1.8737e-01 & 7.1684e-02 & 1.2220 & 2.5728e-02 & 1.6548 \\
1871 & 6.4159e-02 & 3.1326e-02 & 1.2064 & 8.6704e-03 & 1.5850 \\
3752 & 2.1954e-02 & 1.3236e-02 & 1.2381 & 2.9120e-03 & 1.5680 \\
\hline
\end{tabular}
\end{minipage}

\vspace{0.8em}

\begin{minipage}[t]{\textwidth}
\centering
\textbf{(b) $\lambda_{\partial\Omega}=6$}

\vspace{0.3em}
\begin{tabular}{c c cc cc}
\hline
$n$ & $\mathrm{Err}_{\partial\Omega}$ & $\mathrm{Err}_{\dot{H}^1}^{\mathrm{rel}}$ & 1.33 & $\mathrm{Err}_{L^2}^{\mathrm{rel}}$ & 1.67 \\
\hline
30   & 3.4182e-01 & 9.9674e-01 & --      & 9.9472e-01 & --      \\
60   & 1.3690e+00 & 9.3408e-01 & 0.0937 & 8.5665e-01 & 0.2156 \\
118  & 1.7355e+00 & 7.5952e-01 & 0.3059 & 6.5435e-01 & 0.3983 \\
235  & 1.1178e+00 & 3.7294e-01 & 1.0325 & 2.4729e-01 & 1.4125 \\
468  & 4.4215e-01 & 1.7027e-01 & 1.1381 & 8.4287e-02 & 1.5624 \\
942  & 1.5972e-01 & 7.2215e-02 & 1.2262 & 2.6271e-02 & 1.6665 \\
1871 & 5.6090e-02 & 3.1444e-02 & 1.2116 & 8.7984e-03 & 1.5941 \\
3752 & 1.9626e-02 & 1.3274e-02 & 1.2393 & 2.9362e-03 & 1.5774 \\
\hline
\end{tabular}
\end{minipage}

\vspace{0.8em}

\begin{minipage}[t]{\textwidth}
\centering
\textbf{(c) $\lambda_{\partial\Omega}=600$}

\vspace{0.3em}
\begin{tabular}{c c cc cc}
\hline
$n$ & $\mathrm{Err}_{\partial\Omega}$ & $\mathrm{Err}_{\dot{H}^1}^{\mathrm{rel}}$ & 1.33 & $\mathrm{Err}_{L^2}^{\mathrm{rel}}$ & 1.67 \\
\hline
30   & 7.4446e-03 & 9.9990e-01 & --      & 9.9982e-01 & --      \\
60   & 3.9988e-02 & 9.9715e-01 & 0.0040 & 9.9469e-01 & 0.0074 \\
118  & 1.2912e-01 & 9.6835e-01 & 0.0433 & 9.6005e-01 & 0.0524 \\
235  & 3.2078e-01 & 6.5845e-01 & 0.5599 & 6.1007e-01 & 0.6582 \\
468  & 1.9790e-01 & 2.5869e-01 & 1.3562 & 1.7749e-01 & 1.7922 \\
942  & 7.0888e-02 & 1.0173e-01 & 1.3342 & 4.7481e-02 & 1.8850 \\
1871 & 2.3864e-02 & 4.0748e-02 & 1.3332 & 1.3921e-02 & 1.7880 \\
3752 & 8.2437e-03 & 1.6292e-02 & 1.3175 & 4.1505e-03 & 1.7390 \\
\hline
\end{tabular}
\end{minipage}

\end{table}

%%%%%%%%%%%%%%%%%%%%%%%%%%%%%

\section*{Acknowledgement}
The authors would like to thank the THU-ATOM Lab at the Institute for AI Industry Research (AIR), Tsinghua University, for providing the computational resources that supported this research.

\newpage
\bibliographystyle{plainnat}
\bibliography{references}

\end{document}